\newtheorem {theorem} {Theorem}[section]
\newtheorem {proposition} [theorem]{Proposition}
\newtheorem {lemma}  [theorem]{Lemma}
\newtheorem {definition} [theorem]{Definition}
\theoremstyle{remark}
\theoremstyle{definition}
\newtheorem {remark} [theorem]{Remark}
\begin{document}
\setlength{\parindent}{4ex}
\setlength{\parskip}{2ex}
\setlength{\oddsidemargin}{12mm}
\setlength{\evensidemargin}{9mm}

\title
{\textsc {Limit cycles by perturbing quadratic isochronous centers inside piecewise smooth polynomial differential systems}}
\begin{figure}[b]
\rule[-0.5ex]{7cm}{0.2pt}\\
\footnotesize $^{*}$Corresponding author. E-mail address:
cenxiuli2010@163.com (X. Cen), lyang@math.tsinghua.edu.cn(L. Yang) and mzhang@math.tsinghua.edu.cn(M. Zhang).
\end{figure}
\author
{{\textsc {Xiuli Cen, Lijun Yang and Meirong Zhang}}\\[2ex]
{\footnotesize\it  Department of Mathematical Sciences, Tsinghua University, Beijing, 100084, P.R.China}}
\date{}
\maketitle {\narrower \small \noindent {\bf Abstract\,\,\,} {In the present paper, we study the number of zeros of the first order Melnikov function for piecewise smooth polynomial differential system, to estimate the number of
limit cycles bifurcated from the period annuli of quadratic isochronous centers, when they are perturbed inside the class of all piecewise smooth polynomial differential systems of degree $n$ with the straight line of discontinuity $x=0$. A sharp upper
bound for the number of zeros of the first order Melnikov functions with respect to quadratic isochronous centers $S_1, S_2$ and $S_3$ is provided. For quadratic isochronous center  $S_4$, we give a rough estimate for the number of zeros of the first order Melnikov function due to its complexity. Furthermore, we
improve the upper bound associated with $S_4$, from $14n+11$ in \cite{LLLZ}, $12n-1$ in \cite{SZ} to $[(5n-5)/2]$, when it is perturbed inside all smooth polynomial differential systems of degree $n$. Besides, some evidence on the equivalence between the first order Melnikov function method and the first order
Averaging method for investigating the number of limit cycles of piecewise smooth polynomial differential systems is found.}

Mathematics Subject Classification: Primary 34A36, 34C07, 37G15.}

Keywords: Limit cycle; Quadratic isochronous center; Piecewise smooth differential system.

\section{Introduction and statement of the main result}

In the qualitative theory of real planar differential systems, one important open problem is the determination of limit cycles.
The study of limit cycles for smooth polynomial differential systems origins from the well-known Hilbert 16th Problem, and has
achieved lots of rich and excellent works, see the survey \cite{L} and the references therein.
Nevertheless, it is still open even for the quadratic cases. As substantial
piecewise smooth differential systems have emerged in control theory, electronic
circuits with switches, and mechanical engineering with impact and dry frictions etc., the investigation of limit cycles for
piecewise smooth differential systems attracts lots of mathematicians' widespread concerns. They attempt to develop the theory
on piecewise smooth differential systems, and generalise the tools for studying the number of limit cycles from smooth differential systems to piecewise smooth differential systems. To the best of our knowledge,  the Melnikov function method \cite{LH,LCZ}
and the Averaging method \cite{LNT} are two main tools extended to study the number of limit cycles for piecewise smooth differential systems.

A center of a real planar polynomial differential system is called an isochronous center
if there exists a neighborhood of which such that all periodic orbits in this neighborhood have the
same period. Owing to its speciality, the isochronous centers have attracted much more attentions, see the survey \cite{C}.

The quadratic polynomial differential systems with an isochronous center were first classified into four kinds in \cite{L} by Loud. Using the notation of \cite{MRT}, we exhibited the four classes of quadratic isochronous centers and their first integrals as follows, see Table \ref{Tab:S}.
\begin{table}[h]
\caption{Quadratic isochronous centers and first integrals}
\vspace{2pt}
\centering
\doublerulesep=0.4pt
\begin{tabular}{cll}
\hline\hline\\[-8pt]
Name &  \quad System & \quad First integral\\[1ex] \hline\\[-8pt]
$S_1$ & \quad $\dot{x}=-y+\frac{1}{2}x^{2}-\frac{1}{2}y^{2}$ &  \quad $H=\frac{x^2+y^2}{1+y}$\\[1ex]
& \quad $\dot{y}=x(1+y)$ &\\[1ex]
$S_2$ & \quad $\dot{x}=-y+x^{2}$ & \quad $H=\frac{x^2+y^2}{(1+y)^2}$\\[1ex]
& \quad $\dot{y}=x(1+y)$ &\\[1ex]
$S_3$ & \quad $\dot{x}=-y+\frac{1}{4}x^{2}$ & \quad $H=\frac{(x^2+4y+8)^2}{1+y}$\\[1ex]
& \quad $\dot{y}=x(1+y)$ &\\[1ex]
$S_4$ & \quad $\dot{x}=-y+2x^{2}-\frac{1}{2}y^{2}$ & \quad $H=\frac{4x^2-2(y+1)^2+1}{(1+y)^4}$\\[1ex]
& \quad $\dot{y}=x(1+y)$ &\\[1ex]
\hline\hline
\end{tabular}
\label{Tab:S}
\end{table}

Studies on the number of limit cycles bifurcated from the period annuli of quadratic isochronous centers, when they are perturbed inside all smooth polynomial  differential systems of degree $n$, have exhibited a relatively complete result. For $n=2$, in \cite{CJ}, by the first order bifurcation,  Chicone and Jacobs  proved that  at most 1 limit cycle bifurcates from the periodic orbits of $S_1$, and at most 2 limit cycles bifurcate from the periodic orbits of $S_2,S_3$ and $S_4$;
Iliev obtained in \cite{I} that the cyclicity of the period annulus around $S_1$ is also 2. Li et al in \cite{LLLZ} presented a linear estimate for the number of limit cycles with respect to the four quadratic isochronous centers for any natural number $n$, where the upper bounds for systems $S_1,S_2$ and $S_3$ are sharp, while the upper bound for system $S_4$ is not.  Shao and Zhao gave an improved upper bound for system $S_4$ in \cite{SZ}.

Currently, researches on the number of limit cycles bifurcated from the period annuli of quadratic isochronous centers, when they are perturbed inside all piecewise smooth polynomial  differential systems of degree $2$, also obtained some results.
Llibre and Mereu used the averaging method of first order to study the number of limit cycles bifurcated from the period annuli of $S_1$
and $S_2$, when they are perturbed inside a class of piecewise smooth quadratic polynomial differential systems \cite{LM} with the straight line of discontinuity $y=0$, and found that
at least 4 and 5 limit cycles can bifurcate from the period annuli of $S_1$ and $S_2$, respectively.
Li and Cen obtained in \cite{LC} that there are at most 4 limit cycles bifurcating from the periodic orbits of $S_3$ by the averaging method of first order and \emph{Chebyshev criterion}, when it is perturbed inside a class of discontinuous quadratic polynomial differential systems with the straight line of discontinuity $x=0$. Cen et al applied the same methods and proved in \cite{CLZ} that there are at most 5 limit cycles bifurcating from the periodic orbits of $S_4$.

In the present paper, we will consider the piecewise smooth polynomial perturbations of degree $n$ for all four quadratic isochronous centers,
and investigate the number of zeros of the first order Melnikov functions associated with these quadratic isochronous centers to study the maximum
number of limit cycles bifurcated from the period annuli. As far as we know, studies on the number of limit cycles for quadratic polynomial differential systems with a center under piecewise smooth polynomial perturbations of degree $n$ are rare, see for instance \cite{LL}.

Suppose that $H=H(x,y)$ is a first integral of the quadratic isochronous center, and $R=R(x,y)$ is the corresponding  integrating factor. We consider the piecewise smooth polynomial perturbations of quadratic isochronous center:
\begin{equation}
\left(\begin{array}{ll}\dot{x}\\[2ex] \dot{y}\end{array}\right)=\label{S}\left\{\begin{array}{ll}
\left(\begin{array}{ll}-\dfrac{H_y}{R}+\varepsilon P^+(x,y)\\
\dfrac{H_x}{R}+\varepsilon Q^+(x,y)\end{array}\right), & \mbox{ $x> 0$,}
\\[2ex]
\left(\begin{array}{ll}-\dfrac{H_y}{R}+\varepsilon P^-(x,y)\\
\dfrac{H_x}{R}+\varepsilon Q^-(x,y)\end{array}\right), & \mbox{ $x< 0$,}
\end{array} \right.
\end{equation}
where $0<|\varepsilon|\ll1$ and $P^{\pm}(x,y),Q^{\pm}(x,y)$ are polynomials in the variables $x$ and $y$ of degree $n$, given by
\begin{equation}\label{PQ}\begin{split}
P^{\pm}(x,y)=\sum_{i+j=0}^{n}a_{ij}^{\pm}x^iy^j,\quad Q^{\pm}(x,y)=\sum_{i+j=0}^{n}b_{ij}^{\pm}x^iy^j.
\end{split}\end{equation}

Adopting the first order Melnikov function method for piecewise smooth integrable non-Hamiltonian systems \cite{LCZ}, we have the following main results.
\begin{theorem}\label{th:S}
Denote the least upper bound for the number of zeros (taking into account their multiplicity) of the first order Melnikov function associated with the quadratic isochronous center $S_i$ by $H_i(n)$, $i=1,2,3,4$. Then
\vspace{-10pt}
\begin{itemize}
\item[(a)]$H_1(0)=1$; $H_1(n)=n+3$ for $n=1,2,3$; and $H_1(n)=2n$ for $n\geq4$;
\item[(b)]$H_2(n)=n+1$ for $n=0,1$; and $H_2(n)=2n+2$ for $n\geq2$;
\item[(c)]$H_3(0)=1$; $H_3(n)=2n+1$ for $n=1,2$; and $H_3(n)=2n+2$  for $n\geq3$;
\item[(d)]$H_4(0)=1$; $H_4(1)=4$; $H_4(2)=7$; $H_4(n)\leq12n+4$ for $n=3,4,5$; and $H_4(n)\leq20n-10-2(1+(-1)^n)$ for $n\geq6$.
\end{itemize}
\vspace{-10pt}
Notice that ``$=$'' denotes the upper bound is sharp.
\end{theorem}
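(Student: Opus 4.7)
The plan is to apply the first order Melnikov function formula of \cite{LCZ} for piecewise smooth integrable non-Hamiltonian systems. Denoting by $L_h^{+}$ and $L_h^{-}$ the arcs of the periodic orbit $\{H=h\}$ in $\{x\ge 0\}$ and $\{x\le 0\}$ respectively, this formula gives
\[
M_i(h)=\int_{L_h^{+}} R\bigl(Q^{+}\,dx-P^{+}\,dy\bigr)+\int_{L_h^{-}} R\bigl(Q^{-}\,dx-P^{-}\,dy\bigr).
\]
Substituting the polynomial ansatz \eqref{PQ}, I would expand $M_i(h)$ as a linear combination, with coefficients linear in the $a_{ij}^{\pm},b_{ij}^{\pm}$, of ``half-monomial'' integrals $I_{jk}^{\pm}(h)=\int_{L_h^{\pm}} R\,x^{j}y^{k}\,(\alpha\,dx+\beta\,dy)$, then reduce that family, using integration by parts along $L_h^{\pm}$ together with the identity $H(x,y)=h$, to a minimal basis, and finally bound the number of isolated zeros of elements in the span of this basis on the relevant interval of $h$.

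For $S_1,S_2,S_3$ I would parameterize the level curves rationally: for $S_1$, the substitution $u=x/(1+y)$, $v=y/(1+y)$ straightens $\{H=h\}$ into circles; for $S_2$ an analogous rational change works; for $S_3$ the level curves become parabolas after a quadratic change. In each case the half-arc integrals collapse to integrals of rational functions over a circular or conic arc, and one obtains a small explicit basis whose coordinates are a few elementary functions of $h$. The $2\binom{n+2}{2}$ free coefficients of $P^{\pm},Q^{\pm}$ project onto a space of roughly twice the dimension available in the smooth setting, and this doubling is exactly the source of the improved bounds $2n$, $2n+2$, $2n+2$ in place of the smooth-case values $n$, $n+1$, $n+1$.

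To turn the span of the basis into a zero count I would apply a Chebyshev-type criterion, for instance the extended complete Chebyshev (ECT) test of Grau--Ma\~nosas--Villadelprat, which yields the upper bounds in (a), (b), (c). Sharpness is then established by constructing explicit piecewise perturbations whose Melnikov functions realize the claimed numbers of zeros; this reduces to verifying that the linear map from $(a_{ij}^{\pm},b_{ij}^{\pm})$ to the coordinates in the reduced basis is surjective and that one may choose those coordinates so that the resulting polynomial in the auxiliary variable has the maximum number of simple real roots. The small exceptional values of $n$ in (a), (b), (c) are treated separately because the number of available perturbation coefficients is then smaller than the generic basis size.

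The main obstacle is $S_4$ in part (d). Its first integral is rational with quartic denominator, so the level curves have algebraic degree four and no rational parameterization reduces the problem to a small Chebyshev system. I would therefore settle for a two-step upper bound: first a recursive algebraic reduction of the $I_{jk}^{\pm}(h)$, using integration by parts along $L_h^{\pm}$ together with $H=h$ to lower the $y$-degree of the integrand, producing a basis of size at most $20n-10-2(1+(-1)^{n})$ for $n\ge 6$ (and at most $12n+4$ for $n=3,4,5$); then a zero count by repeated application of Rolle's theorem to the resulting combinations. For the improvement of the smooth-case bound for $S_4$ down to $[(5n-5)/2]$, the same algebraic reductions apply, but now with the two half-integrals coupled by the closed-loop symmetry, which further cuts the effective dimension; the bookkeeping in this coupled reduction is the most delicate part of the argument.
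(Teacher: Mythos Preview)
Your overall framework is sound, but the proposal has a genuine gap in the upper-bound step for general $n$, and it diverges from the paper's actual machinery in ways that matter.

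\textbf{Computation of $M(h)$.} The paper does not parameterize $L_h^{\pm}$ rationally and integrate by parts along the arcs. Instead it applies Green's theorem to convert each half-integral into a double integral over the region $D_h^{\pm}$ plus a line integral along the $y$-axis segment $\overrightarrow{\mathcal A_h\mathcal B_h}$. This produces, after a change to $u=1+y$, one-dimensional integrals $I_k(h)=\int_{\beta}^{\alpha}(1+y)^k\sqrt{\cdots}\,dy$ and $J_k(h)=\int_{\beta}^{\alpha}(1+y)^k\,dy$ which are evaluated in closed form. For $S_1$, for example, $M(h)$ lands in the span of $h^{k+1}$, $h^k\sqrt{h(4+h)}$ ($k\le n-2$) and $h^k\ln(1+\alpha(h))$ ($k\le 2$). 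Your rational-parameterization route might reach an equivalent basis, but the substitution you wrote for $S_1$ does not in fact straighten $\{H=h\}$ to circles, and for $S_3$ the paper instead pulls back to the $S_1$ situation by $x=2\bar x$, $y=\bar y(2+\bar y)$.

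\textbf{Upper bounds.} Here is the real gap. You propose to apply an ECT criterion to the resulting basis. For general $n$ this basis has roughly $2n$ elements, and verifying the ECT condition would require controlling $2n$ Wronskians uniformly in $n$, which is not feasible and is not what the paper does. The paper's method is: (i) differentiate $M(h)$ enough times to kill the logarithmic term; (ii) multiply by a suitable nonvanishing factor and differentiate again to kill the polynomial part, using a key identity (their Lemma~2.2) of the form
\[
\left(\frac{P_n(x)}{x^p(a\pm x)^q}\right)^{(n+1-(p+q))}=\frac{\widetilde P_n(x)}{x^{n+1-q}(a\pm x)^{n+1-p}},\qquad p,q\notin\mathbb Z,\ p+q\in\mathbb Z,
\]
which preserves the polynomial degree in the numerator; (iii) count zeros of the resulting rational expression and climb back up with Rolle's theorem, subtracting one for the built-in zero $M(h_c)=0$. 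This elimination scheme is what actually yields the exact counts $2n$, $2n+2$, $2n+2$; ECT is used only for a few small-$n$ cases. Moreover, for $S_3$ with $n=3,4$ the last Wronskian \emph{does} vanish (once, respectively twice), so the sharp lower bound there requires the Novaes--Torregrosa extensions of ECT rather than the classical criterion you cite.

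\textbf{System $S_4$.} Your description is too schematic to cover what is actually needed. For $n=0,1$ the paper uses ECT but the last Wronskian involves the ratio $v(h)=\bar I_2/\bar I_0$, whose sign is controlled via the Riccati equation derived from the Picard--Fuchs system for $(\bar I_0,\bar I_2)$. For $n=2$ the argument is substantially harder: one passes to complete elliptic integrals, sets $w(s)=I/J$ satisfying another planar Riccati system, and shows that the curve $\{W_7=0\}$ meets the graph of $w$ in exactly one simple point by a resultant-plus-contact-point count together with Sturm's theorem. For $n\ge 3$ the paper performs a three-stage elimination (log, then two polynomial layers) and bounds the remaining combination $P\bar I_2+\bar P\bar I_0$ via the Riccati equation for $\bar I_2/\bar I_0$; this is where the constants $20n-10$ and $12n+4$ come from, not from a direct basis-size count. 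Your ``basis of size at most $20n-10-2(1+(-1)^n)$'' is not how the bound arises.
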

\begin{remark}
(i) In \cite{LC} and \cite{CLZ}, the authors respectively studied the case of $n=2$ when
$a_{00}^{\pm}=b_{00}^{\pm}=0$ for systems $S_3$ and $S_4$ . They proved that
at most 4 and 5 limit cycles can bifurcate from the period annuli of these two quadratic isochronous centers by the averaging method of first order. Compared to the results shown in Theorem \ref{th:S}, the piecewise smooth polynomial perturbations with the constant term can make the
perturbed systems produce at least one more limit cycle.

(ii) In the estimation of number of zeros of Melnikov functions for systems $S_3$ and $S_4$, we find that using the first order Melnikov method
can lead to the same results as those using the first order Averaging method for piecewise smooth polynomial differential systems when $n=2$. Han et al showed the equivalence between the Melnikov method and the Averaging method for studying the number of limit cycles, which are bifurcated
from the period annulus of planar analytic differential systems in
\cite{HRZ}. Here some evidences demonstrate that the equivalence may also hold for piecewise smooth analytic differential systems.

(iii) If $a_{ij}^+=a_{ij}^-$ and $b_{ij}^+=b_{ij}^-$, then the perturbed systems are smooth. Li et al have studied these systems in \cite{LLLZ}, and have proven that:
\begin{theorem}\cite{LLLZ}
The least upper bound for the number of zeros (taking  into account their
multiplicity) of the first Melnikov function (Abelian integral) associated with the system:
\begin{itemize}
\item[(a)]$S_1$ is 0 if $n = 0$; 1 if $n = 1, 2, 3$; $n-2$ for $n\geq4$;
\item[(b)]$S_2$ is 0 if $n = 0, 1$; and $n$ for $n\geq2$.
\item[(c)]$S_3$ is $n$ for all $n\geq0$;
\item[(d)]$S_4$ is $\leq14n + 11$.
\end{itemize}
\end{theorem}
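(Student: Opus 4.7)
The plan is to reduce the statement to a counting problem for zeros of Abelian integrals. For smooth perturbations the first order Melnikov function becomes
\[
M(h)=\oint_{\gamma_h} R(x,y)\bigl(Q(x,y)\,dx-P(x,y)\,dy\bigr),
\]
where $\gamma_h\subset\{H=h\}$ is a periodic orbit of the unperturbed center and $R$ is its integrating factor. For each $S_i$, the first step is to use the reversibility $(x,y)\mapsto(-x,y)$ together with the symmetry $R(-x,y)=R(x,y)$ to kill half of the monomial perturbations: those $a_{ij}x^iy^j\,dy$ with $i$ odd and those $b_{ij}x^iy^j\,dx$ with $i$ even contribute nothing to $M(h)$. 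This leaves $M(h)$ as a linear combination of basic Abelian integrals $I_{k,j}(h)=\oint_{\gamma_h} R\,x^{2k+1}y^j\,dx$ (and their $dy$-counterparts) whose coefficients are free parameters of the perturbation.

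The second step is to reduce the family $\{I_{k,j}\}$ to a minimal set of generators by exploiting the relations coming from integration by parts and from $dH\equiv 0$ along $\gamma_h$. For $S_1$, $S_2$ and $S_3$ the first integrals are rational of low $y$-degree, the ovals $\gamma_h$ are quasi-rationally parametrisable, and the reduction yields a short list of generators governed by an explicit Picard--Fuchs system. I would then check, via a Wronskian criterion in the spirit of Grau--Ma\~{n}osas--Villadelprat, that these generators form an extended complete Chebyshev system on the open $h$-interval parametrising the period annulus. The Chebyshev property yields the stated upper bounds in (a), (b), (c) for the generic range of $n$, while sharpness follows by choosing the coefficients $a_{ij}$, $b_{ij}$ to realise all admissible linear combinations. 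The small-$n$ anomalies, where the dimension of the space of Melnikov functions is too small to match the generic formula, are treated by direct inspection of the low-dimensional spaces that arise.

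The principal obstacle is case (d). The first integral of $S_4$ is $H=(4x^2-2(y+1)^2+1)/(1+y)^4$, quartic in $y$, so $\gamma_h$ is a genuine higher-genus algebraic curve whose $y$-projection is not rationally parametrisable. As a consequence the surviving generators satisfy a Picard--Fuchs system of considerably greater rank, and their Wronskians need not be sign-constant, which blocks a direct Chebyshev argument. My approach would be to derive from the Picard--Fuchs system a polynomial-coefficient ODE satisfied by $M(h)$, and then bound its zero count by a Petrov-type estimate summing the order of the ODE, the degree of its leading coefficient, and the multiplicity of $M(h)$ at the center and polycycle endpoints of the period annulus. An unoptimised execution of this programme yields the linear bound $\leq 14n+11$ claimed in (d); sharpening the constant in front of $n$ is precisely the technical difficulty that the body of the present paper sets out to address.
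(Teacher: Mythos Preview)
This theorem is not proved in the present paper: it is quoted verbatim from \cite{LLLZ} inside Remark~1.2, and the paper only \emph{uses} the \cite{LLLZ} machinery (adapting it to the piecewise setting). So there is no ``paper's own proof'' to compare against in the strict sense; what can be compared is the method of \cite{LLLZ} as reproduced throughout Sections~\ref{sec:S1}--\ref{sec:S4}.

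That method is quite different from yours. Instead of killing half the monomials by the $(x,y)\mapsto(-x,y)$ symmetry and then invoking Picard--Fuchs relations and a Wronskian/Chebyshev criterion, the route in \cite{LLLZ} (and in this paper) is: apply Green's theorem to turn the Abelian integral into a double integral over the interior of $\gamma_h$, integrate first in $x$ using the explicit expression $x^2=x^2(y,h)$ from the rational first integral, and arrive at a representation $M(h)=\sum_k m_k(h)I_k(h)$ where the $m_k$ are polynomials of controlled degree (the ``Newton's formula'' bookkeeping in \eqref{mk1}, \eqref{mk}, \eqref{mk4}) and the $I_k$ are elementary one-variable integrals computed in closed form. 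For $S_1$, $S_2$, $S_3$ this gives $M(h)$ explicitly as a short combination of powers, square roots and a single logarithm, and the zero count follows from repeated differentiation and Rolle; sharpness is by exhibiting Jacobians of the coefficient map (cf.\ Proposition~\ref{PS12}). For $S_4$ the $I_k$ involve the pair $(\bar I_0,\bar I_2)$ of elliptic integrals satisfying \eqref{I02}, and the $14n+11$ bound is obtained by a Riccati-equation argument on $v=\bar I_2/\bar I_0$, not by a Petrov-style zero count for a higher-order ODE as you sketch.

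Your outline is a legitimate alternative strategy and the symmetry reduction is correct in the smooth case, but two points deserve caution. First, for $S_1$--$S_3$ you assert that the generators form an ECT-system via a Wronskian check ``in the spirit of Grau--Ma\~nosas--Villadelprat''; in practice the generators here include a logarithm (for $S_1$, $S_3$) or mixed square roots, and the sharp counts $1$, $n-2$, $n$, $n$ are obtained in \cite{LLLZ} not by an ECT argument but by the explicit closed form plus Rolle. An ECT approach would require you to actually compute and sign the relevant Wronskians, which you do not do. Second, for $S_4$ your claim that an ``unoptimised execution'' of the Petrov programme gives exactly $14n+11$ is not substantiated; the specific constant depends on the precise degree bookkeeping in the Riccati reduction, and without carrying that out one cannot assert the same bound.
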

Applying Abelian integrals and complete elliptic integrals of the first and second kinds,  Shao and Zhao give a smaller upper bound on the number of zeros of the first Melnikov function for system $S_4$ in \cite{SZ}. That is, the least upper bound for the number of zeros is $0$, for $n=0$; is not greater than $35$ for $n=1,2,3$; is not greater than  $59$ for $n=4,5,6$; and is not greater than $12n-1$ for $n\geq7$.

We give a further investigation of the upper bound with respect to $S_4$, and obtain a better result as follows.
\begin{theorem}\label{th:S4}
The least upper bound for the number of zeros (taking into account their multiplicity) of the first order Melnikov function associated with the system $S_4$ is $\leq [\frac{5n-5}{2}]$.
\end{theorem}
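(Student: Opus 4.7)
The plan is to write the first-order Melnikov function $M(h)$ as a short combination of generating Abelian integrals over $\Gamma_h$ and then bound its zeros by a Picard--Fuchs argument adapted to the quartic structure of $S_4$. With integrating factor $R=c/(1+y)^{5}$,
\[
M(h)=\oint_{\Gamma_h}\frac{Q(x,y)\,dx-P(x,y)\,dy}{(1+y)^{5}},\qquad \Gamma_h\colon\;4x^{2}=h(1+y)^{4}+2(1+y)^{2}-1.
\]
The $x\mapsto -x$ symmetry of $\Gamma_h$ kills $\oint x^{i}y^{j}(1+y)^{-5}\,dx$ whenever $i$ is odd and $\oint x^{i}y^{j}(1+y)^{-5}\,dy$ whenever $i$ is even, immediately halving the list of potentially nonzero generators extracted from the degree-$n$ coefficients of $P,Q$.

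First I would integrate by parts via $d\bigl(x^{2k+1}G(y)\bigr)$ to absorb every surviving $\oint x^{2k+1}y^{j}(1+y)^{-5}\,dy$ into $\oint x^{2k}(1+y)^{\ell}\,dx$-type integrals, unifying all generators as $J_{k,\ell}(h)=\oint_{\Gamma_h}x^{2k}(1+y)^{\ell}\,dx$. Second, using the relation $x^{2}=\tfrac14\bigl(h(1+y)^{4}+2(1+y)^{2}-1\bigr)$ that holds on $\Gamma_h$, I would iteratively replace $x^{2k}$ by a polynomial in $(1+y)$ with coefficients in $\mathbb{R}[h]$, collapsing the $J_{k,\ell}$ onto the two one-parameter families $J_{0,\ell}$ and $J_{1,\ell}$; one further round of integration by parts via $d((1+y)^{\ell+1})$ then links $J_{1,\ell}$ to $J_{0,\ell'}$ at the cost of multiplication by polynomials in $h$. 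A careful tally of the exponents $\ell$ actually produced by $P,Q$ of degree $n$ should confine the surviving base integrals to a window of length roughly $5n/2$, which accounts for the constant in the target bound.

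With $M(h)$ expressed as a polynomial-in-$h$ combination of this short list of base integrals, the substitution $u=1+y$, $2x=\sqrt{hu^{4}+2u^{2}-1}$ realises each as a complete elliptic integral of the first or second kind in a modulus $k(h)$ read off from the roots of $hu^{4}+2u^{2}-1$. The classical Legendre / Picard--Fuchs system for $K$ and $E$ then yields a second-order linear differential operator annihilating each base integral; chaining these operators across the $\approx 5n/2$ generators and applying a Rolle / argument-principle count to the resulting higher-order ODE satisfied by $M(h)$ delivers the claimed $\lfloor (5n-5)/2\rfloor$ upper bound. The principal obstacle is the bookkeeping needed to sharpen Shao--Zhao's $12n-1$ down to roughly $5n/2$: one must exhibit precise cancellations \emph{across} the $\oint Q\,dx$ and $\oint P\,dy$ contributions rather than bounding each separately, and verify that the Picard--Fuchs reduction does not reintroduce spurious generators, typically via an explicit Wronskian or linear-independence check on the resulting elliptic-integral basis.
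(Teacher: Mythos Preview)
Your broad strategy---reduce $M(h)$ to a short list of Abelian integrals, recognise these as complete elliptic integrals, and finish with a Picard--Fuchs argument---is in the right family, but the specific mechanism by which the paper obtains the factor $5/2$ is different from what you describe, and your accounting of where the $5n/2$ comes from does not match the actual structure.

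In the paper the reduction does \emph{not} produce a ``window of length roughly $5n/2$'' of base integrals. Rather, for the smooth case and (say) $n\ge 6$ even, one obtains
\[
M(h)=((-h)^{-1/2}-1)P_2(\sqrt{-h})+(-h)^{(5-n)/2}(1+h)P_{(n-6)/2}(h)+(-h)^{(4-n)/2}\bigl(P_{(n-4)/2}(h)\bar I_2+\bar P_{(n-4)/2}(h)\bar I_0\bigr),
\]
where $\bar I_0,\bar I_2$ are exactly two Abelian integrals satisfying a two-dimensional Picard--Fuchs (Fuchsian) system. The elementary pieces (the first two terms) are killed by multiplying by $(-h)^{(n-1)/2}$ and differentiating $(n-2)/2+2$ times; by Rolle this costs about $n/2$ potential zeros. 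What survives is $g=P_{n-2}(h)\bar I_2+\bar P_{n-2}(h)\bar I_0$, which lives in a vector space of dimension $2n-2$. The sharp bound on zeros of $g$ comes not from ``chaining operators'' but from the Gavrilov--Iliev Chebyshev theorem for two-dimensional Fuchsian systems: since $\bar I_0,\bar I_2$ satisfy $\mathbf I=\mathbf A(h)\mathbf I'$ with $\det\mathbf A$ having simple real roots and $\lambda^*=3/4$, the space $V_s$ is Chebyshev with accuracy $1$, giving at most $2n-2$ zeros for $g$ in the complex domain, hence at most $2n-3$ in $(-1,0)$ once the trivial zero at $h=-1$ is discarded. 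Adding back the $n/2$ derivatives yields $(5n-6)/2$.

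So the two genuine gaps in your plan are: (i) you do not separate the elementary part of $M(h)$ from the transcendental $(\bar I_0,\bar I_2)$-part and remove the former by targeted differentiation---this is where the additive $n/2$ in $5n/2\approx 2n+n/2$ comes from; and (ii) the endgame is not an ad hoc ``higher-order ODE plus Rolle'' count over many generators, but a single application of the Gavrilov--Iliev Chebyshev property for the two-dimensional system satisfied by $(\bar I_0,\bar I_2)$, which directly gives the $2n-3$ contribution. Without invoking that theorem (or reproving an equivalent sharp Chebyshev bound), a generic Picard--Fuchs/argument-principle count will overshoot and you will not reach $[(5n-5)/2]$.
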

\end{remark}

As is demonstrated above, the main object of this paper is to provide the least upper bound for the number of zeros of the first order Melnikov functions
with respect to the quadratic isochronous centers $S_1, S_2, S_3$ and $S_4$, when they are perturbed by piecewise smooth polynomials of degree $n$, and to
give an improved upper bound for $S_4$ in \cite{LLLZ,SZ}, when it is perturbed by smooth polynomials of degree $n$. Incidentally, we contrast the results
obtained in \cite{LC} and \cite{CLZ} which deal with the quadratic isochronous centers $S_3$ and $S_4$ respectively by the averaging method of first order,
when $n=2$ and $a_{00}^{\pm}=b_{00}^{\pm}=0$. We find that (i) if the piecewise smooth polynomial perturbations include the constant term, the perturbed
systems could produce at least one more limit cycle, (ii) the first order Melnikov method and the first order averaging method are equivalent in studying
the number of limit cycles bifurcated from the period annuli of the centers.

We exploit the technique than in \cite{LLLZ} to compute
the first order Melnikov functions, that is, we calculate the Melnikov function
through a double integral by using Green's theorem. It has great advantages in obtaining a more accurate expression, and the double integrals are very easy
to compute. For $S_4$, a more precise representation of the  first order Melnikov function than in \cite{LLLZ} is obtained, and thus
a better upper bound for the smooth case can be acquired. Since the Melinikov functions include different kinds of
elementary functions, or elliptic integrals, to obtain a better estimate for the number of zeros of the first order Melnikov
function, we always eliminate these elementary functions step by step orderly. It consists in getting rid of the logarithm
function first, and then eliminating functions which include polynomials as numerators by multiplying nonzero factors and taking derivatives, and thus it suffices to consider the derived function. A useful lemma is proposed to be helpful for determining the exact upper bound for systems $S_1$, $S_2$ and $S_3$, and a better upper bound for system $S_4$.  For small $n$,  the common \emph{Chebyshev criterion} and the properties on extended Chebyshev systems with positive accuracy are used to obtain a sharp upper bound. When the polynomial perturbations are smooth, the Chebyshev property on two-dimensional Fuchsian systems 
also plays a key role in resulting in Theorem \ref{th:S4}.

The present paper is organized as follows.  First, some useful preliminary results are given in Section \ref{sec:pre}. Then, we estimate the number of zeros of the first order Melnikov function for quadratic isochronous centers $S_1, S_2, S_3$ and $S_4$ in Sections \ref{sec:S1}-\ref{sec:S4} respectively, when they are perturbed inside piecewise smooth polynomial differential systems. The proof of Theorem \ref{th:S4} is provided in Section \ref{sec:S4s}, in which an improved result on the number of zeros of the first order Melnikov function for quadratic isochronous center $S_4$ under smooth polynomial perturbations is obtained. Finally, some important proofs and results are given in Appendix  for reference.

\section{Preliminary results}\label{sec:pre}
In this section, we introduce the main method that we will use to study the piecewise smooth polynomial systems \eqref{S}, and
some useful tools and results to estimate the number of zeros of the first Melnikov function.

From Theorem 1 of \cite{LCZ}, the first order Melnikov function with respect to system \eqref{S} is
\begin{equation}\label{M0}
\begin{split}
M(h)=\int_{L_h^+}RP^+\mathrm{d}y-RQ^+\mathrm{d}x+\int_{L_h^-}RP^-\mathrm{d}y-RQ^-\mathrm{d}x,
\end{split}
\end{equation}
where $L_h^+=\{x\geq0|H=h, h\in(h_c,h_s)\}$ and $L_h^-=\{x\leq0|H=h, h\in(h_c,h_s)\}$, see Figure \ref{fig1}. Here $H=h$ is one
periodic orbit of system \eqref{S}$|_{\varepsilon=0}$, and $h_c$ and $h_s$ correspond to the center and the separatrix polycycle, respectively.
Inspired by the idea of \cite{LLLZ}, we will use Green's theorem to compute $M(h)$ through two double integrals.

Let $\mathcal{A}_h$ and $\mathcal{B}_h$ be the two intersection points of $L_h=L_h^+\bigcup L_h^-$ and $y$-axis, and $D_h^+$ and $D_h^-$ be the regions formed by $L_h^+\bigcup\overrightarrow{\mathcal{A}_h\mathcal{B}_h}$ and $L_h^-\bigcup\overrightarrow{\mathcal{B}_h\mathcal{A}_h}$, respectively. Then by Green's theorem, the first order Melnikov function \eqref{M0} can be expressed as
\begin{equation}\label{Mh}
\begin{split}
M(h)=&\iint_{D_h^+}\left[\frac{\partial(RP^+)}{\partial x}+\frac{\partial(RQ^+)}{\partial y}\right]\mathrm{d}x\ \mathrm{d}y-\int_{\overrightarrow{\mathcal{A}_h\mathcal{B}_h}}RP^+\mathrm{d}y-RQ^+\mathrm{d}x\\
&+\iint_{D_h^-}\left[\frac{\partial(RP^-)}{\partial x}+\frac{\partial(RQ^-)}{\partial y}\right]\mathrm{d}x\ \mathrm{d}y-\int_{\overrightarrow{\mathcal{B}_h\mathcal{A}_h}}RP^-\mathrm{d}y-RQ^-\mathrm{d}x.
\end{split}
\end{equation}
In the subsequent sections, we will exploit formula \eqref{Mh} to obtain the specific expressions of the first order Melnikov functions
for the four quadratic isochronous centers.
\begin{figure}[h]
\centering
\includegraphics[width=.45\textwidth]{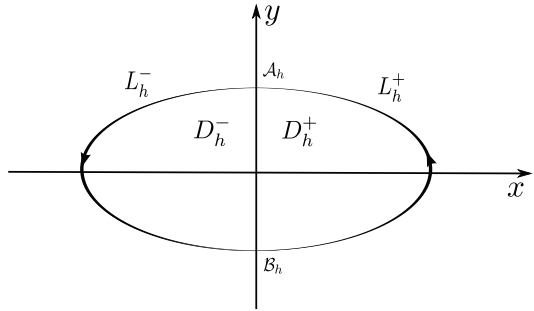}
\caption{\small{The periodic orbit of \eqref{S}$|_{\varepsilon=0}$.}}
\label{fig1}
\end{figure}

For a more complicated function, it is not an easy thing to determine the exact number of its zeros. Here we provide
some effective results to obtain the lower bound and the upper bound of the number of zeros for a more complicated function.
The next result is well known for a lower bound.

\begin{lemma}\label{le:CGP}\cite{CGP}
Consider $n$ linearly independent analytical functions $f_i(x):D\rightarrow\mathbb{R},i=1,2,\cdots,n$,
where $D\subset\mathbb{R}$ is an interval. Suppose that there exists $k\in\{1,2,\cdots,n\}$ such that $f_k(x)$ has constant sign. Then there exists $n$ constants $c_i, i=1,2,\cdots,n$ such that $c_1f_1(x)+
c_2f_2(x)+\cdots+c_nf_n(x)$ has at least $n-1$ simple zeros in $D$.
\end{lemma}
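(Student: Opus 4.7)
The plan is two-fold: first produce by linear algebra a nontrivial combination of the $f_i$ that vanishes at $n-1$ prescribed points, and then invoke the constant-sign hypothesis on $f_k$ to ensure, via a perturbation/transversality argument, that those zeros can be taken to be simple.

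For the construction, I would appeal to the standard fact that $n$ linearly independent analytic functions on an interval admit $n$ distinct evaluation points $t_1<\cdots<t_n\in D$ at which $\det[f_i(t_j)]\neq 0$; this follows from analyticity together with an iterative point-selection argument. Fixing any $n-1$ of them, say $x_1<\cdots<x_{n-1}$, the linear system $\sum_{i=1}^n c_i f_i(x_j)=0$ for $j=1,\ldots,n-1$ has a one-dimensional kernel, and a generator $(c_1^\ast,\ldots,c_n^\ast)$ yields the determinantal function
\[ F^\ast(x)=\det\!\begin{pmatrix} f_1(x_1)&\cdots&f_n(x_1)\\ \vdots&&\vdots\\ f_1(x_{n-1})&\cdots&f_n(x_{n-1})\\ f_1(x)&\cdots&f_n(x)\end{pmatrix}, \]
which vanishes at each $x_j$ (two rows coincide) and is not identically zero on $D$, since otherwise the full $n\times n$ determinant evaluated at $t_1,\ldots,t_n$ would be zero, contradicting our choice.

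To sharpen these zeros to simple ones, I would let $(x_1,\ldots,x_{n-1})$ vary over $D^{n-1}$: the function $F^\ast$ depends analytically on the base points, and $x_j$ being a zero of multiplicity $\geq 2$ amounts to the additional analytic equation $F^\ast{}'(x_j)=0$. The constant-sign hypothesis on $f_k$ enters here to guarantee this locus is a \emph{proper} subvariety of $D^{n-1}$. Concretely, since $f_k$ is nowhere zero I may normalize the kernel generator so that $c_k^\ast\neq 0$ on an open set of base-point choices; the derivative $F^\ast{}'(x_j)$—expressible as a sum of $(n-1)\times(n-1)$ minors involving the $f_i'(x_j)$ and values $f_i(x_\ell)$, $\ell\neq j$—then becomes a nontrivial analytic function of the remaining base coordinates. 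Selecting $(x_1,\ldots,x_{n-1})$ off the proper bad locus produces $(c_1,\ldots,c_n)$ for which $\sum c_i f_i$ has $n-1$ simple zeros.

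The main obstacle is the transversality step. One must rigorously argue that the multiple-zero locus is proper—i.e., that there is at least one configuration of base points at which every $F^\ast{}'(x_j)$ is nonzero—using the analytic dependence of the kernel (valid wherever the $(n-1)\times n$ matrix keeps full rank) together with $f_k$'s non-vanishing to rule out the pathological scenario in which $F^\ast{}'(x_j)\equiv 0$ is forced by the same conditions that impose $F^\ast(x_j)=0$. Without the constant-sign assumption this step can fail (for example, when the $f_i$ share a common zero inside $D$), so pinning down exactly how $f_k>0$ prevents the hidden coincidence is the delicate part of the argument.
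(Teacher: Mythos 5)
First, a point of reference: the paper gives no proof of this lemma at all --- it is quoted directly from \cite{CGP} --- so your argument can only be judged against the standard one. Your first two steps are sound: points $t_1<\cdots<t_n$ with $\det[f_i(t_j)]\neq 0$ exist by the iterative expansion-along-the-last-row argument you allude to (if every such determinant vanished, the cofactor expansion would produce a nontrivial linear relation among the $f_i$), and the determinantal function $F^\ast$ vanishes at $x_1,\dots,x_{n-1}$ without vanishing identically. That already gives $n-1$ zeros; the entire remaining content of the lemma is their \emph{simplicity}, and this is exactly the step you leave open. Two things go wrong there. First, your proposed use of the hypothesis --- normalizing the kernel generator so that $c_k^\ast\neq0$ --- has no bearing on whether $(F^\ast)'(x_j)=0$; the properness of the multiple-zero locus in $D^{n-1}$ reduces to the non-vanishing of generalized Wronskian-type expressions in the base points (already for $n=2$ it is precisely the condition $W(f_1,f_2)(x_1)\not\equiv0$), and you neither establish this nor explain how the constant sign of $f_k$ would enter such a computation. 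Second, you flag the difficulty yourself but do not resolve it, so as written the proof is incomplete at the only place where the distinguishing hypothesis of the lemma must be used.

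The standard way to close the gap is simpler and makes the role of $f_k$ transparent. With the same nonsingular evaluation matrix, prescribe alternating values $F(t_j)=(-1)^j$ instead of zeros; then $F=\sum c_if_i$ changes sign $n-1$ times and so has zeros of odd multiplicity $z_1<\cdots<z_{n-1}$, which you enclose in disjoint compact intervals $[a_j,b_j]\subset D$ with $F(a_j)F(b_j)<0$. Now perturb in the single direction $f_k$: a multiple zero of $F+\delta f_k$ at a point $x$ forces $\delta=-F(x)/f_k(x)$ (legitimate because $f_k$ never vanishes) together with $W(f_k,F)(x)=f_k(x)F'(x)-f_k'(x)F(x)=0$. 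Since $F$ has zeros while $f_k$ has none, $F$ is not a constant multiple of $f_k$, so the analytic Wronskian $W(f_k,F)$ is not identically zero and has finitely many zeros on $\bigcup_j[a_j,b_j]$; hence only finitely many values of $\delta$ are excluded. Any other sufficiently small $\delta$ preserves the sign conditions at the $a_j,b_j$ and forces at least one zero in each $(a_j,b_j)$, all of them simple. This perturbation along $f_k$ is the missing ingredient, and it is the only point at which the constant-sign hypothesis is actually needed.
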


To obtain a better upper bound for the number of zeros of the first order Melnikov
function, we give the following formula, which plays a key role in determining the least upper bound. The proof is putted in Appendix A.1.
\begin{lemma}\label{lem:P}
If $a\neq0$, $p, q\not\in \mathbb{Z}$ and $p+q\in \mathbb{Z}$, then
\begin{equation}\begin{split}
\left(\displaystyle\frac{P_{n}(x)}{x^{p}(a\pm x)^{q}}\right)^{(n+1-(p+q))}=\dfrac{\widetilde P_{n}(x)}{x^{n+1-q}(a\pm x)^{n+1-p}},  \quad n\geq p+q-1,
\end{split}\end{equation}
where $P_n(x)$ and $\widetilde P_{n}(x)$ are polynomials of degree $n$, $f^{(k)}$ denotes the $k$-order derivative of the function $f$.
\end{lemma}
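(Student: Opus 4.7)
The plan is to derive the formula by induction on the number of derivatives taken, and then sharpen the bound on the degree of the resulting numerator by an asymptotic argument at $x\to\infty$. Set $m := n + 1 - (p+q)$, a non-negative integer by the hypothesis $n \geq p+q-1$.

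First I would prove by induction on $k \geq 0$ that
\[
\left(\frac{P_n(x)}{x^p(a\pm x)^q}\right)^{(k)} = \frac{A_k(x)}{x^{p+k}(a\pm x)^{q+k}}
\]
for some polynomial $A_k(x)$. The inductive step is mechanical: differentiating once and clearing the common denominator $x^{p+k+1}(a\pm x)^{q+k+1}$ produces the numerator $x(a\pm x)A_k'(x) - (p+k)(a\pm x) A_k(x) \mp (q+k)\, x\, A_k(x)$, which is again a polynomial. For $k = m$, the exponents in the denominator become $p + m = n+1-q$ and $q + m = n+1-p$, giving exactly the denominator in the statement.

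The crux is then the degree bound $\deg A_m \leq n$. A direct induction on degrees only yields $\deg A_m \leq n + m = 2n+1-(p+q)$, which is too large, so an asymptotic argument is needed. Binomial expansion of $(a \pm x)^{-q}$ combined with $P_n(x) x^{-p}$ gives
\[
f(x) := \frac{P_n(x)}{x^p(a\pm x)^q} = \sum_{\ell \geq (p+q) - n} e_\ell\, x^{-\ell},
\]
where $\ell$ ranges over integers because the hypothesis $p+q \in \mathbb{Z}$ cancels the fractional exponents. Differentiating term-wise, the coefficient of $x^{-\ell-m}$ in $f^{(m)}(x)$ is $(-1)^m e_\ell\, \ell(\ell+1)\cdots(\ell+m-1)$, and this product vanishes exactly when $\ell \in \{1-m,\dots,0\} = \{(p+q)-n,\dots,0\}$. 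Since the series starts precisely at $\ell = (p+q)-n$, every term with $\ell \leq 0$ is killed, so $f^{(m)}(x) = O(x^{-m-1}) = O(x^{(p+q)-n-2})$ as $x\to\infty$. Matching this decay with the representation $A_m(x)/[x^{n+1-q}(a\pm x)^{n+1-p}] \sim A_m(x)\, x^{(p+q)-2n-2}$ at infinity forces $\deg A_m \leq n$.

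The main obstacle is precisely this cancellation step: the integer hypothesis $p+q \in \mathbb{Z}$ is exactly what guarantees that a zero factor appears in $\ell(\ell+1)\cdots(\ell+m-1)$ for every offending $\ell$ in the expansion, thereby wiping out every high-degree asymptotic term. The separate hypothesis $p,q \notin \mathbb{Z}$ is not used in the degree count; it merely ensures that the rational expression does not admit an algebraic simplification from the outset, so that the formula is genuinely meaningful.
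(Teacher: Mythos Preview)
Your argument is correct. The induction on $k$ gives the denominator, and the Laurent expansion at infinity together with the vanishing of the Pochhammer factor $\ell(\ell+1)\cdots(\ell+m-1)$ for $\ell\in\{(p+q)-n,\dots,0\}$ is exactly what forces $\deg A_m\le n$. The hypothesis $p+q\in\mathbb Z$ enters precisely where you say: it makes the exponents in the expansion at infinity integral, so that the range of killed $\ell$'s lines up with the initial segment of the series.

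The paper's proof is different in execution. It writes $P_n(x)=\sum_k d_k x^k$, applies the Leibniz rule directly to each $x^{k-p}(a+x)^{-q}$, and obtains an explicit closed form for the coefficient of $x^{k+j-s}$ in the numerator:
\[
C_{k+j-s}=a^sC_j^s\prod_{m=s}^{j-1}(k-p-q-m)\prod_{m=0}^{s-1}(k-p-m),
\]
proved via an inductive combinatorial identity. The vanishing for degrees above $n$ then comes from the first product containing the factor with $m=k-(p+q)$. So the paper trades your asymptotic viewpoint for an explicit coefficient computation: it is longer and requires proving an auxiliary product identity, but it yields the numerator coefficients in closed form rather than merely their degree. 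Your route is shorter and more conceptual; the paper's route is constructive. Both arrive at $\deg\widetilde P_n\le n$, and neither actually establishes (or needs) that the degree is exactly $n$.
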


In addition, for small $n$, the theory on \emph{Extended
Complete Chebyshev system} (in short, ECT-system) is useful for an exact upper bound.
Let $\mathcal{F}=(f_1, f_2, \cdots, f_n)$ be an ordered set of $\mathcal{C}^\infty$ functions on $L$.
We call it an ECT-system on $L$ if, for all
$i=1,2,...,n$, any nontrivial linear combination
$\lambda_{1}f_{1}(x)+\lambda_{2}f_{2}(x)+...+\lambda_{i}f_{i}(x)$
has at most $i-1$ isolated zeros on $L$ counted with multiplicities. Moreover, this bound can be reached \cite{GV}.

\begin{lemma}\label{ECT3}
\cite{MV} $\mathcal{F}$ is an ECT-system on L
if, and only if, for each $i=1,2,...,n$,
\begin{equation*}
W_{i}(x)=
\begin{vmatrix}
f_{1}(x) &  f_{2}(x) & \cdots &  f_{i}(x)\\
f_{1}^{\prime}(x) &f_{2}^{\prime}(x)  & \cdots & f_{i}^{\prime}(x) \\
\vdots & \vdots & \ddots & \vdots \\
f_{1}^{(i-1)}(x) & f_{2}^{(i-1)}(x) & \cdots & f_{i}^{(i-1)}(x) \\
  \end{vmatrix}\neq{0}, \quad \mbox{$x\in L$} .
\end{equation*}
\end{lemma}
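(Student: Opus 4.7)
The plan is to prove the iff by handling the two directions separately, with the forward direction (ECT $\Rightarrow$ Wronskians nonvanishing) being essentially a zero-counting argument and the reverse direction requiring an inductive reduction procedure based on a Wronskian identity.

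First I would dispatch the necessity direction. Suppose $\mathcal{F}$ is an ECT-system and yet $W_i(x_0)=0$ for some index $i \le n$ and some $x_0\in L$. Vanishing of the Wronskian means the columns of the Wronskian matrix are linearly dependent, so there are scalars $\lambda_1,\ldots,\lambda_i$, not all zero, such that $F(x)=\sum_{j=1}^{i}\lambda_j f_j(x)$ satisfies $F(x_0)=F'(x_0)=\cdots=F^{(i-1)}(x_0)=0$. Hence $F$ has a zero of multiplicity at least $i$ at $x_0$, contradicting the ECT-bound of $i-1$ zeros counted with multiplicity.

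For the sufficiency direction I would induct on $i$. The base $i=1$ is immediate: $W_1=f_1\neq 0$ forces any nontrivial multiple $\lambda_1 f_1$ to have no zeros. For the inductive step, assume that every family whose first $i-1$ Wronskians do not vanish on $L$ is an ECT-system of length $i-1$. Given $F=\sum_{j=1}^{i}\lambda_j f_j$ with $\lambda_i\neq 0$ (the case $\lambda_i=0$ is covered by induction), set $G=F/f_1$, which is well-defined since $W_1=f_1$ never vanishes. A zero of $F$ of multiplicity $m$ is a zero of $G$ of multiplicity $m$; if $F$ had at least $i$ zeros counted with multiplicity, then by Rolle's theorem $G'$ would have at least $i-1$ zeros counted with multiplicity. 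But
\begin{equation*}
G'(x)=\sum_{j=2}^{i}\lambda_j\left(\frac{f_j}{f_1}\right)'(x)=\sum_{k=1}^{i-1}\lambda_{k+1}\,g_k(x),\qquad g_k:=\left(\frac{f_{k+1}}{f_1}\right)'.
\end{equation*}
The central algebraic step is the Wronskian reduction identity
\begin{equation*}
W(g_1,g_2,\ldots,g_k)=\frac{W(f_1,f_2,\ldots,f_{k+1})}{f_1^{k+1}},\qquad k=1,\ldots,i-1,
\end{equation*}
which I would verify by column operations on the Wronskian matrix (subtracting $f_1$-multiples of the first column and differentiating). Since $W(f_1,\ldots,f_{k+1})\neq 0$ and $f_1\neq 0$ on $L$, all Wronskians of the reduced family $(g_1,\ldots,g_{i-1})$ are nonvanishing, so the inductive hypothesis applies and $G'$ has at most $i-2$ zeros on $L$, contradicting the Rolle count of $i-1$.

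The main obstacle is the Wronskian reduction identity and the careful bookkeeping of zero multiplicities under the transformation $F\mapsto G=F/f_1\mapsto G'$: a zero of $F$ of order $m$ at $x_0$ becomes a zero of $G'$ of order $m-1$ (using that $f_1(x_0)\neq 0$), while a set of $i$ zeros counted with multiplicity gives $i-1$ zeros of $G'$ counted with multiplicity by a multiplicity-aware Rolle argument. Once these are in hand, the induction closes cleanly and both directions combine to give the stated equivalence.
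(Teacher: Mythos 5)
Your proposal is correct, but note that the paper itself offers no proof of this lemma: it is quoted verbatim from \cite{MV} as a known criterion (the classical Karlin--Studden equivalence between ECT-systems and nonvanishing Wronskians), so there is no in-paper argument to compare against. Your two directions are the standard ones and both are sound: necessity via the observation that $W_i(x_0)=0$ produces a nontrivial combination of $f_1,\dots,f_i$ vanishing to order $i$ at $x_0$, and sufficiency via induction using the division--derivation reduction $F\mapsto (F/f_1)'$ together with the identity $W\bigl((f_2/f_1)',\dots,(f_{k+1}/f_1)'\bigr)=W(f_1,\dots,f_{k+1})/f_1^{k+1}$ and a multiplicity-aware Rolle count. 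The only point worth flagging is that for merely $\mathcal{C}^\infty$ families the phrase ``isolated zeros counted with multiplicity'' in the paper's definition hides a technicality: a nontrivial combination could in principle vanish to infinite order at a point without being identically zero, and your necessity argument implicitly assumes the order-$i$ zero you construct is an isolated zero of finite multiplicity. In the analytic setting in which the lemma is actually applied throughout the paper this issue disappears, so your argument is adequate for the intended use.
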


If the order set $\mathcal{F}$ is not an ECT-system, i.e., some Wronskian determinant has zeros, then the following two lemmas are powerful. The first one
provides an sharp upper bound, while the second one provides a lower bound, which extends Lemma \ref{le:CGP}.
\begin{lemma}\label{le:NT}\cite{NT}
Let $\mathcal{F}$ be an ordered set of $\mathcal{C}^\infty$ functions on $[a, b]$. Assume that all the Wronskians are nonvanishing except $W_n(x)$, which has exactly one zero on $(a, b)$ and this zero is simple. Then $Z(F)=n$ and for any configuration of $m\leq n$ zeros there exists an element in $\mathrm{Span}(\mathcal F)$ realizing it, where $Z(\mathcal F)=n$ denotes the maximum number of zeros counting multiplicity that any nontrivial function $F\in\mathrm{Span}(\mathcal F)$ can have.
\end{lemma}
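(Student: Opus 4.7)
\textbf{Proof proposal for Theorem \ref{th:S4}.} My plan is to obtain a tighter representation of the first order Melnikov function $M(h)$ associated with $S_4$ under smooth polynomial perturbations, and then apply the Chebyshev property for two-dimensional Fuchsian systems to extract the bound $[(5n-5)/2]$.

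First I would compute the integrating factor for $S_4$. A direct calculation gives $R(x,y)=8/(1+y)^{5}$. For the smooth case the perturbations on $x>0$ and $x<0$ coincide, so formula~\eqref{Mh} collapses to a single double integral
\begin{equation*}
M(h)=\iint_{D_h}\!\left[\frac{\partial(RP)}{\partial x}+\frac{\partial(RQ)}{\partial y}\right]\mathrm{d}x\,\mathrm{d}y
=\iint_{D_h}\frac{U_{n}(x,y)}{(1+y)^{6}}\,\mathrm{d}x\,\mathrm{d}y,
\end{equation*}
where $U_n$ is a polynomial of degree $n$ in $(x,y)$ whose coefficients are linear in the $a_{ij},b_{ij}$. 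Here $D_h$ is the disk bounded by the oval $\{H=h\}$, which for $S_4$ is cut out by the quartic relation $4x^{2}=2(y+1)^{2}-1+h(1+y)^{4}$. The point of this representation, following~\cite{LLLZ}, is that it kills many spurious monomials that appear in a contour-integral treatment and produces a cleaner finite-dimensional vector space of Abelian integrals to work with.

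Next I would reduce the double integral to a line integral on the oval by ``vertical slicing'': integrate in $x$ first, using $x=\pm\tfrac{1}{2}\sqrt{2(y+1)^{2}-1+h(1+y)^{4}}$ for the boundary, and then express the result as a single integral in $y$. After the change of variable $u=1+y$, everything can be written as a combination of integrals of the form
\begin{equation*}
I_{k}(h)=\oint_{H=h}\frac{y^{k}}{(1+y)^{\ell(k)}}\,x\,\mathrm{d}y,\qquad k=0,1,\dots,
\end{equation*}
with $\ell(k)$ determined by parity and degree. Using the Picard--Fuchs relations satisfied by these integrals together with the identity from Lemma~\ref{lem:P} (to lower exponents of $(1+y)$ systematically) I would reduce $M(h)$ to a linear combination of a carefully chosen small basis of generators. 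Careful bookkeeping should show that at most $\lfloor(5n-5)/2\rfloor+1$ linearly independent basic Abelian integrals survive after all reductions, which immediately caps the dimension of the linear span that $M(h)$ lives in.

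Finally, to convert this dimension count into an actual zero bound I would appeal to the Chebyshev property for two-dimensional Fuchsian systems (the Grau--Ma\~nosas--Villadelprat type criterion alluded to in the paper): the reduced generators satisfy a Fuchsian system of rank $2$ on the disk of periodic orbits, and by verifying that the associated Wronskians are non-vanishing on the interval $(h_{c},h_{s})$, any nontrivial combination has at most one less zero than the number of generators. Combined with the reduction step this yields $\#\{M(h)=0\}\le[(5n-5)/2]$. The main obstacle I anticipate is precisely this last verification: one must (i) choose the basis so that the Fuchsian system has the right triangular structure, and (ii) check the positivity/non-vanishing of the relevant Wronskian determinants, which is a delicate computation with elliptic integrals of the first and second kinds coming from the quartic level set of $H$. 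The parity-dependent floor in $[(5n-5)/2]$ suggests that the reduction will split into even-$n$ and odd-$n$ cases, and I would handle them in parallel, using the symmetry $x\mapsto -x$ of the unperturbed $S_4$ to kill the odd monomials in $x$ at each step.
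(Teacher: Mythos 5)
Your proposal does not address the statement you were asked to prove. The statement is Lemma~\ref{le:NT}, a general result (due to Novaes and Torregrosa, cited as \cite{NT} in the paper) about an ordered set $\mathcal{F}$ of $\mathcal{C}^\infty$ functions on $[a,b]$ whose Wronskians $W_1,\dots,W_{n-1}$ are nonvanishing while $W_n$ has exactly one simple zero, asserting that $Z(\mathcal{F})=n$ and that every configuration of $m\le n$ zeros is realizable in $\mathrm{Span}(\mathcal{F})$. This is a statement in the abstract theory of extended Chebyshev systems with positive accuracy; its proof has nothing to do with Melnikov functions, integrating factors, Picard--Fuchs equations, or the specific system $S_4$. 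What you have written instead is a strategy sketch for Theorem~\ref{th:S4} (the bound $[(5n-5)/2]$ for $S_4$ under smooth perturbations), which is an entirely different result that merely \emph{uses} Chebyshev-type machinery as a tool. The paper itself does not reprove Lemma~\ref{le:NT}; it imports it from \cite{NT} and applies it (for instance in the $n=3$ case of $S_3$ and the $n=2$ case of $S_4$, where the last Wronskian acquires a simple zero and the plain ECT criterion of Lemma~\ref{ECT3} fails).

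If a proof of Lemma~\ref{le:NT} were actually required, the argument would proceed along the lines of the division--derivation (reduction) algorithm for Chebyshev systems: one divides by $f_1$ and differentiates to pass to the reduced ordered set whose Wronskians are ratios of consecutive $W_i$'s, iterates until the single simple zero of $W_n$ governs the count, and then shows that the zero of $W_n$ can contribute at most one extra zero beyond the ECT bound $n-1$, while a perturbation/transversality argument realizes any admissible configuration. None of that appears in your proposal. Separately, even read as an attempt at Theorem~\ref{th:S4}, your sketch omits the key reduction the paper performs --- repeatedly differentiating $(-h)^{(n-1)/2}M''$ to strip away the elementary parts and isolate $g=P_{n-2}(h)\bar I_2+\bar P_{n-2}(h)\bar I_0$, to which the Gavrilov--Iliev Fuchsian--Chebyshev theorem with accuracy $1+[\lambda^*]$, $\lambda=3/4$, is then applied --- so the claimed dimension count of $\lfloor(5n-5)/2\rfloor+1$ surviving generators is not substantiated. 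But the primary defect is simply that you proved the wrong statement.
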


\begin{lemma}\label{le:NT2}\cite{NT}
Let $\mathcal{F}$ be an ordered set of real $\mathcal{C}^\infty$ functions on $(a, b)$ satisfying that all the Wronskians are nonvanishing
except $W_{n-1}(x)$ and $W_n(x)$, such that there exists $\xi\in(a, b)$ with $W_{n-1}(\xi)\neq0$.
If $W_n(\xi)=0$ and $W'_n(\xi)\neq0$, then for each configuration of $m\leq n$ zeros, taking account their multiplicity, there exists $F\in\mathrm{Span}(\mathcal F)$ with this configuration of zeros.
\end{lemma}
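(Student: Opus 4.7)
The plan is to split the argument by the size of the configuration: configurations of $m\le n-1$ zeros will be handled via the local Chebyshev structure carried by $(f_1,\ldots,f_{n-1})$ near $\xi$, while configurations of exactly $n$ zeros will be obtained by a deformation argument that exploits the simple zero of $W_n$ at $\xi$.

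First I would localize around $\xi$. The hypotheses give $W_1,\ldots,W_{n-2}$ nonvanishing on $(a,b)$ and $W_{n-1}$ continuous with $W_{n-1}(\xi)\neq 0$, so by continuity there is an open subinterval $U\subset(a,b)$ containing $\xi$ on which all of $W_1,\ldots,W_{n-1}$ are nonzero simultaneously. Lemma \ref{ECT3} then guarantees that $(f_1,\ldots,f_{n-1})$ is an ECT-system on $U$. From the classical realization property for ECT-systems (see \cite{MV}), any prescribed configuration of $m\le n-1$ zeros inside $U$ is attained by some nontrivial element of $\mathrm{Span}(f_1,\ldots,f_{n-1})\subset\mathrm{Span}(\mathcal{F})$, so the conclusion holds for $m<n$.

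For the critical case $m=n$, fix a configuration $C=\{(x_j,k_j)\}_{j=1}^r$ with $\sum k_j=n$ and the $x_j$ close to $\xi$. An element $F=\sum_{\ell=1}^n\lambda_\ell f_\ell$ realizes $C$ precisely when the $n\times n$ generalized-Wronskian matrix $\Lambda_C=[f_\ell^{(i)}(x_j)]$, whose rows encode the prescribed vanishing orders, is singular. I would verify that $\det\Lambda_C$ is smooth in $C$ and, up to a nonzero combinatorial factor, coincides with $W_n(\xi)$ when all the $x_j$ collapse to $\xi$; in particular $\det\Lambda_C$ vanishes at the fully coalesced configuration. The hypothesis $W_n'(\xi)\neq 0$ then forces this zero to be simple along any generic one-parameter spreading of $C$, and the implicit function theorem carves out a smooth codimension-one subvariety of singular configurations in the space of all $C$ near $\xi$.

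The main obstacle, and the step I expect to be the hardest, is showing that this codimension-one singular set of $\det\Lambda_C$ in fact captures \emph{every} multiplicity pattern $(k_1,\ldots,k_r)$ and every placement of the $x_j$ close to $\xi$, not merely a distinguished one-parameter family. For this I would follow the Chebyshev-with-positive-accuracy strategy of \cite{NT}: the local ECT-system $(f_1,\ldots,f_{n-1})$ on $U$ already realizes any configuration of $n-1$ zeros by Lemma \ref{ECT3}, and adjoining $f_n$ together with the simple vanishing of $W_n$ at $\xi$ supplies exactly one additional realized zero whose location can be tuned continuously across a neighborhood of $\xi$ as the coefficients vary. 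Matching degrees of freedom then forces every configuration of $n$ zeros in a sufficiently small subinterval of $U$ around $\xi$ to be attained by some $F\in\mathrm{Span}(\mathcal{F})$, which, combined with the $m\le n-1$ case from the ECT property, completes the proof.
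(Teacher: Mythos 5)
The paper offers no proof of this lemma at all --- it is quoted directly from \cite{NT} --- so your attempt has to stand on its own, and as written it does not. There are two concrete problems. First, an off-by-one error in the easy half: if $(f_1,\dots,f_{n-1})$ is an ECT-system on $U$, then a nontrivial element of $\mathrm{Span}(f_1,\dots,f_{n-1})$ has at most $n-2$ zeros, so that span can only realize configurations of $m\le n-2$ zeros, not $m\le n-1$ as you claim; the case $m=n-1$ is left uncovered. (This is repairable: since $W_n'(\xi)\neq 0$, the Wronskian $W_n$ is nonzero on a one-sided punctured neighborhood of $\xi$, where the full ordered set $(f_1,\dots,f_n)$ is an ECT-system and does realize any $m\le n-1$ zeros --- but that is not the argument you gave.)

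Second, and more seriously, for $m=n$ you set out to realize ``every placement of the $x_j$ close to $\xi$,'' and that statement is false, not merely hard. Prescribing $n$ zero locations imposes $n$ homogeneous linear conditions on the $n$ coefficients of $F$, i.e.\ $\det\Lambda_C=0$; since $f_1,\dots,f_n$ are linearly independent near $\xi$ (otherwise $W_n\equiv 0$ there, contradicting $W_n'(\xi)\neq 0$), this determinant is not identically zero, so the realizable placements form a codimension-one hypersurface, not a full neighborhood. In \cite{NT} a ``configuration of $m$ zeros'' prescribes only the multiplicities $(k_1,\dots,k_r)$ with $\sum k_j=m$, and the content of the lemma is that each such pattern occurs at \emph{some} locations. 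Your implicit-function-theorem observation --- that the confluent limit of $\det\Lambda_C$ is a nonzero multiple of $W_n(\xi)=0$ with nonvanishing derivative --- is the right starting point for that weaker, correct statement: it yields a nonempty codimension-one set of singular placements emanating from the fully coalesced configuration. But you would still need to verify (i) that this hypersurface actually enters the open cone $x_1<\cdots<x_r$ for every multiplicity pattern, and (ii) that the resulting $F$ has exactly the prescribed zeros and no others, which requires the local upper bound $Z\le n$ near $\xi$ coming from the single simple zero of $W_n$ there. The sentence ``matching degrees of freedom then forces every configuration\dots'' asserts precisely the step that needs proof, so the core of the argument is missing.
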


Some results on Two-dimensional Fuchsian systems and the Chebyshev property in \cite{GI} are given in Appendix A.4 for reference, which will be
used to obtain an improved upper bound on quadratic isochronous center $S_4$, when it is perturbed inside all smooth polynomial differential systems of degree $n$.
\section{Zeros of $M(h)$ for system $S_1$}\label{sec:S1}
Consider the piecewise smooth polynomial perturbations of degree $n$ of system $S_1$:
\begin{equation}
\left(\begin{array}{ll}\dot{x}\\[2ex] \dot{y}\end{array}\right)=\label{S1}\left\{\begin{array}{ll}
\left(\begin{array}{ll}-y+\frac{1}{2}x^{2}-\frac{1}{2}y^{2}+\frac{\varepsilon }{2}P^+(x,y)\\
x(1+y)+\frac{\varepsilon }{2}Q^+(x,y)\end{array}\right), & \mbox{ $x> 0$,}
\\[2ex]
\left(\begin{array}{ll}-y+\frac{1}{2}x^{2}-\frac{1}{2}y^{2}+\frac{\varepsilon }{2}P^-(x,y)\\
x(1+y)+\frac{\varepsilon }{2}Q^-(x,y)\end{array}\right), & \mbox{ $x< 0$,}
\end{array} \right.
\end{equation}
where $P^{\pm}(x,y)$ and $Q^{\pm}(x,y)$ are given by \eqref{PQ}. For ${\varepsilon=0}$, a first integral of system \eqref{S1} is
\[
H=\frac{x^2+y^2}{1+y},
\]
and the integrating factor is $R=\frac{2}{(1+y)^2}$. Here $L_h^+=\{x\geq0|H=h,h>0\}$ and $L_h^-=\{x\leq0|H=h,h>0\}$ are the right part and the left part of the periodic orbits surrounding the origin, respectively. $\mathcal{A}_h=(0,\alpha(h))$ and $\mathcal{B}_h=(0,\beta(h))$, where
\begin{equation}
\alpha(h)=\frac{h+\sqrt{h(4+h)}}{2}, \quad \beta(h)=\frac{h-\sqrt{h(4+h)}}{2}.
\end{equation}

\subsection{Expression of $M(h)$}\label{sub:S11}

This subsection is devoted to obtaining the expression of the first order Melnikov function of system \eqref{S1}. By \eqref{Mh},
\begin{equation}\label{M}
\begin{split}
M(h)=M^+(h)+M^-(h),
\end{split}
\end{equation}
where
\begin{equation*}\label{M1}
\begin{split}
M^+(h)=&\iint_{D_h^+}\left[\frac{\partial}{\partial x}\left(\frac{P^+}{(1+y)^2}\right)+\frac{\partial}{\partial y}\left(\frac{Q^+}{(1+y)^2}\right)\right]\mathrm{d}x\ \mathrm{d}y+\int_{\beta(h)}^{\alpha(h)}\frac{P^+(0,y)}{(1+y)^2}\mathrm{d}y,\\
M^-(h)=&\iint_{D_h^-}\left[\frac{\partial}{\partial x}\left(\frac{P^-}{(1+y)^2}\right)+\frac{\partial}{\partial y}\left(\frac{Q^-}{(1+y)^2}\right)\right]\mathrm{d}x\ \mathrm{d}y-\int_{\beta(h)}^{\alpha(h)}\frac{P^-(0,y)}{(1+y)^2}\mathrm{d}y.\\
\end{split}
\end{equation*}
To acquire the expression of $M(h)$, it suffices to compute $M^+(h)$, and $M^-(h)$ can be obtained in the same way.
\begin{equation*}\label{M^+1}
\begin{split}
M^+(h)=&\iint_{D_h^+}\left[\frac{1}{(1+y)^2}\left(\frac{\partial P^+ }{\partial x}+\frac{\partial Q^+ }{\partial y}\right)-\frac{2Q^+}{(1+y)^3}\right]\mathrm{d}x\ \mathrm{d}y+\int_{\beta(h)}^{\alpha(h)}\frac{P^+(0,y)}{(1+y)^2}\mathrm{d}y\\
=&\iint_{D_h^+}\left[\frac{1}{(1+y)^2}\sum_{i+j\leq n}\left(ia^+_{ij}x^{i-1}y^j+jb^+_{ij}x^{i}y^{j-1}\right)-\frac{2}{(1+y)^3}\sum_{i+j\leq n}b^+_{ij}x^{i}y^j\right]\mathrm{d}x\ \mathrm{d}y\\
&+\int_{\beta(h)}^{\alpha(h)}\frac{1}{(1+y)^2}\sum_{j=0}^na^+_{0j}y^{j}\mathrm{d}y\\
=&\iint_{D_h^+}\left(\frac{1}{(1+y)^2}\sum_{i+j\leq n}ia^+_{ij}x^{i-1}y^j+\frac{1}{(1+y)^3}\sum_{i+j\leq n}b^+_{ij}x^{i}\left(j(1+y)y^{j-1}-2y^j\right)\right)\mathrm{d}x\ \mathrm{d}y\\
&+\int_{\beta(h)}^{\alpha(h)}\frac{1}{(1+y)^2}\sum_{j=0}^na^+_{0j}y^{j}\mathrm{d}y\\
=&\int_{\beta(h)}^{\alpha(h)}\frac{1}{(1+y)^2}\sum_{2i+j\leq n}a^+_{2i,j}(h+hy-y^2)^iy^j\mathrm{d}y\\
&+\int_{\beta(h)}^{\alpha(h)}\frac{1}{(1+y)^2}\sum_{2i+1+j\leq n}a^+_{2i+1,j}(h+hy-y^2)^iy^j\sqrt{h+hy-y^2}\mathrm{d}y\\
&+\int_{\beta(h)}^{\alpha(h)}\frac{1}{(1+y)^3}\sum_{2i+j\leq n}\frac{b^+_{2i,j}}{2i+1}(h+hy-y^2)^i(jy^{j-1}+(j-2)y^j)\sqrt{h+hy-y^2}\mathrm{d}y\\
&+\int_{\beta(h)}^{\alpha(h)}\frac{1}{(1+y)^3}\sum_{2i+j+1\leq n}\frac{b^+_{2i+1,j}}{2i+2}(h+hy-y^2)^{i+1}(jy^{j-1}+(j-2)y^j)\mathrm{d}y\\
=&\sum_{k=0}^{n}\widetilde{m}_{ak}(h)\int_{\beta(h)}^{\alpha(h)}(1+y)^{k-2}\mathrm{d}y+
\sum_{k=0}^{n-1}\overline{m}_{ak}(h)\int_{\beta(h)}^{\alpha(h)}(1+y)^{k-2}\sqrt{h+hy-y^2}\mathrm{d}y\\
&+
\sum_{k=0}^n\overline{m}_{bk}(h)\int_{\beta(h)}^{\alpha(h)}(1+y)^{k-3}\sqrt{h+hy-y^2}\mathrm{d}y
+\sum_{k=0}^{n+1}\widetilde{m}_{bk}(h)\int_{\beta(h)}^{\alpha(h)}(1+y)^{k-3}\mathrm{d}y\\
=&\sum_{k=0}^n\overline{m}_k(h)\int_{\beta(h)}^{\alpha(h)}(1+y)^{k-3}\sqrt{h+hy-y^2}\mathrm{d}y+
\sum_{k=0}^{n+1}\widetilde{m}_k(h)\int_{\beta(h)}^{\alpha(h)}(1+y)^{k-3}\mathrm{d}y,
\end{split}
\end{equation*}
where $\overline{m}_{ik}$, $\widetilde{m}_{ik}$, $i=a,b$, and  $\overline{m}_{k}$, $\widetilde{m}_{k}$ are polynomials of $h$ with degree
\begin{equation}\begin{split}\label{mk1}
&\deg\overline{m}_{ak}\leq\min\{k,n-1-k\},\quad \deg\widetilde{m}_{ak}\leq\min\{k,n-k\},\\
&\deg\overline{m}_{bk}\leq\min\{k,n-k\},\quad \deg\widetilde{m}_{bk}\leq\min\{k,n+1-k\},\\
&\deg\overline{m}_k\leq\min\{k,n-k\},\quad \deg\widetilde{m}_k\leq\min\{k,n+1-k\},
\end{split}\end{equation}
which are determined by Newton's formula and some qualitative analysis, see \cite{LLLZ} for details.
It is worth noting that when $n=0$, $\widetilde{m}_{bk}(h)=0$, for $k=0,1$, and thus $\widetilde{m}_0(h)=0$.

Let
\[
I_k(h)=\int_{\beta(h)}^{\alpha(h)}(1+y)^{k}\sqrt{h+hy-y^2}\mathrm{d}y,\quad
J_k(h)=\int_{\beta(h)}^{\alpha(h)}(1+y)^{k}\mathrm{d}y.
\]
Then
\begin{equation}\label{M1+}
M(h)=\sum_{k=0}^n\overline{m}_k(h)I_{k-3}(h)+
\sum_{k=0}^{n+1}\widetilde{m}_k(h)J_{k-3}(h).
\end{equation}

Next, we need to compute $I_k(h)$ and $J_k(h)$, respectively. For $k>0$, let $u=1+y$, then
\begin{equation*}\begin{split}
I_k(h)=&\int_{u_1}^{u_2}u^k\sqrt{-1+(2+h)u-u^2}\mathrm{d}u\\
=&\int_{u_1}^{u_2}u^k\sqrt{(u-u_1)(u_2-u)}\mathrm{d}u,
\end{split}\end{equation*}
where $u_1=\beta(h)+1$ and $u_2=\alpha(h)+1$ are the two roots of $-1+(2+h)u-u^2=0$.

Two different transformations
$\sqrt{(u-u_1)(u_2-u)}=t(u-u_1)$ and $\sqrt{(u-u_1)(u_2-u)}=t(u_2-u)$ lead to
\begin{equation*}
I_k(h)=2(u_1-u_2)^2\int_{0}^{\infty}\frac{t^2(u_2+u_1t^2)^k}{(1+t^2)^{k+3}}\mathrm{d}t
=2(u_1-u_2)^2\int_{0}^{\infty}\frac{t^2(u_1+u_2t^2)^k}{(1+t^2)^{k+3}}\mathrm{d}t.
\end{equation*}
It follows that
\begin{equation}\begin{split}\label{Ik1}
I_{k}(h)&=\displaystyle\ (u_{1}-u_{2})^{2}\int_{0}^{\infty}\dfrac{t^{2}[(u_{2}+u_{1}t^{2})^{k}+(u_{1}+u_{2}t^{2})^{k}]}{(1+t^{2})^{k+3}}\mathrm{d}t\\
&=\displaystyle\
(u_{1}-u_{2})^{2}\sum_{i+2j=k}r_{ij}(u_{1}+u_{2})^{i}(u_{1}u_{2})^{j}\\
&=\displaystyle h(4+h)
\sum_{i+2j=k}r_{ij}\left(2+h\right)^{i}\\
&=h(4+h)\sum_{i=0}^{k}C_{i,k}h^{i},\quad \quad \quad \quad  k>0,
\end{split}\end{equation}
where $r_{ij}, i+2j=k$, and $C_{i,k}, i=0,1,\cdots,k$ are constants. Moreover
\[
C_{k,k}=\int_{0}^{\infty}\frac{t^{2}(1+t^{2k})}{(1+t^2)^{k+3}}\mathrm{d}t>0.
\]
Direct computations show that
\begin{equation}\begin{split}\label{I1}
&I_{-3}(h)=I_{0}(h)=\frac{h(4+h)\pi}{8},\\
&I_{-2}(h)=I_{-1}(h)=\frac{h\pi}{2}.
\end{split}\end{equation}
We get $J_k(h)$ by direct computations,
\begin{equation}\begin{split}\label{Jk1}
J_k(h)&=\frac{(1+\alpha(h))^{k+1}-(1+\beta(h))^{k+1}}{k+1}\\
&=\frac{(2+h+\sqrt{h(4+h)})^{k+1}-(2+h-\sqrt{h(4+h)})^{k+1}}{(k+1)2^{k+1}}\\
&=\frac{\sqrt{h(4+h)}}{(k+1)2^{k}}\sum_{j=0}^{[k/2]}C_{k+1}^{2j+1}(2+h)^{k-2j}h^j(4+h)^j\\
&=\sqrt{h(4+h)}\sum_{j=0}^{k}s_{j,k}h^j, \quad k>0,
\end{split}\end{equation}
where $s_{i,k}, j=0,1,\cdots,k$ are constants,
\[
s_{k,k}=\frac{1}{(k+1)2^{k}}\sum_{j=0}^{[k/2]}C_{k+1}^{2j+1}=\frac{1}{k+1}>0,
\]
and
\begin{equation}\begin{split}\label{J1}
&J_{-3}(h)=\dfrac{1}{2}\sqrt{h(4+h)}(2+h),\\
&J_{-2}(h)=J_0(h)=\sqrt{h(4+h)},\\
&J_{-1}(h)=2\ln(1+\alpha(h)).
\end{split}\end{equation}
Using \eqref{mk1}-\eqref{J1}, we have the following result.
\begin{proposition}\label{PS11} The first order Melnikov function $M(h)$ for system $S_1$ is:
\begin{equation}\label{M13}
\begin{split}
M(h)
=&\alpha_0h(4+h)+\beta_0\sqrt{h(4+h)},\ n=0,\\
M(h)
=&h(\alpha_{0}+\alpha_{1}h)+\sqrt{h(4+h)}(\beta_0+\beta_1h)+\gamma_0\ln(1+\alpha(h)), \ n=1,\\
M(h)
=&h(\alpha_{0}+\alpha_{1}h)+\sqrt{h(4+h)}(\beta_0+\beta_1h)+(\gamma_0+\gamma_1h)\ln(1+\alpha(h)), \ n=2,\\
M(h)
=&h(\alpha_{0}+\alpha_{1}h)+\sqrt{h(4+h)}(\beta_0+\beta_1h)+(\gamma_0+\gamma_1h+\gamma_2h^2)\ln(1+\alpha(h)),\ n=3,\\
M(h)
=&h\sum_{k=0}^{n-2}\alpha_kh^k+\sqrt{h(4+h)}\sum_{k=0}^{n-2}\beta_kh^k+(\gamma_0+\gamma_1h+\gamma_2h^2)\ln(1+\alpha(h)),\  n\geq4,
\end{split}
\end{equation}
where $\alpha(h)=(h+\sqrt{h(4+h)})/2$, and $\alpha_k, \beta_k, k=0,1,\cdots,n-2$, and $\gamma_0, \gamma_1, \gamma_2$ are constants.
\end{proposition}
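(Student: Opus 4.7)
The plan is to substitute the explicit evaluations \eqref{I1}, \eqref{Ik1}, \eqref{J1}, \eqref{Jk1} of the base integrals $I_{k}$ and $J_{k}$ directly into the master formula \eqref{M1+}, and then collect the resulting terms according to their three possible analytic types: pure polynomial in $h$, $\sqrt{h(4+h)}$ times a polynomial in $h$, and $\ln(1+\alpha(h))$ times a polynomial in $h$.

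First I would classify the integrals. Every $I_{k-3}$, $k=0,\ldots,n$, is a polynomial in $h$ divisible by $h$: \eqref{I1} handles $k\in\{0,1,2,3\}$, and for $k\geq 4$ the formula \eqref{Ik1} gives $I_{k-3}=h(4+h)\sum_{i=0}^{k-3}C_{i,k-3}h^{i}$. Consequently every summand $\overline{m}_{k}(h)I_{k-3}(h)$ feeds into the pure polynomial part of $M(h)$ carrying a factor of $h$. On the $J$-side, \eqref{J1} and \eqref{Jk1} show that $J_{k-3}$ equals $\sqrt{h(4+h)}$ times a polynomial whenever $k\neq 2$, while the single index $k=2$ produces $J_{-1}=2\ln(1+\alpha(h))$, which is the only source of the logarithm in $M(h)$.

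Next I would read off degree bounds. The coefficient of $\ln(1+\alpha(h))$ is just $2\widetilde{m}_{2}(h)$, whose degree is $\leq\min(2,n-1)$ by \eqref{mk1} (and which is absent when $n=0$, since the sum in \eqref{M1+} stops at $k=n+1=1$); this yields exactly the logarithmic coefficients of degrees $0,1,2$ appearing in \eqref{M13} for $n=1,2$ and $n\geq 3$. For the $\sqrt{h(4+h)}$ factor I would maximize $\deg\widetilde{m}_{k}+\deg\bigl(J_{k-3}/\sqrt{h(4+h)}\bigr)$ over $k\neq 2$; combining the bounds in \eqref{mk1} with the polynomial degrees $1,0,0$, and $k-3$ read off from \eqref{J1}--\eqref{Jk1} gives $n-2$ for $n\geq 4$ and the smaller explicit values for $n\leq 3$. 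For the polynomial factor divisible by $h$, an analogous maximization of $\deg\overline{m}_{k}+\deg(I_{k-3}/h)$ over $k=0,\ldots,n$ produces the same bound $n-2$ for $n\geq 4$, attained by the indices $k>n/2$, for which $\min(k,n-k)+(k-2)=n-2$.

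Once the three degree bounds are in hand, absorbing the various nonzero numerical factors from \eqref{I1}--\eqref{Jk1} into free coefficients $\alpha_{k},\beta_{k},\gamma_{k}$ recovers exactly the normal forms listed in \eqref{M13}, case by case in $n$. The main (rather mild) care point will be the $n=0$ row: the remark right after \eqref{mk1} that $\widetilde{m}_{0}\equiv 0$ when $n=0$ kills the otherwise expected $(2+h)\sqrt{h(4+h)}$ contribution and leaves only the single term $\beta_{0}\sqrt{h(4+h)}$, while the polynomial part simultaneously collapses to the single term $\alpha_{0}h(4+h)$ with one free parameter rather than two.
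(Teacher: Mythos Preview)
Your proposal is correct and follows essentially the same route as the paper: substitute the closed forms \eqref{Ik1}--\eqref{J1} into \eqref{M1+}, split into the three analytic types (polynomial times $h$, polynomial times $\sqrt{h(4+h)}$, polynomial times $\ln(1+\alpha(h))$), and bound the polynomial degrees via \eqref{mk1}. The paper's own argument is terser---it declares the cases $n\le 3$ ``straightforward'' and writes out only the two degree inequalities for $k\ge 4$---while you spell out the classification, the low-index values, and the $n=0$ edge case in more detail, but the underlying computation is identical.
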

\begin{proof}
For $n\leq3$, the computation of $M(h)$ is straightforward. For $n\geq4$,
\begin{equation*}
\begin{split}
\deg\left(\dfrac{\overline{m}_k(h)I_{k-3}(h)}{h}\right)&=\deg\overline{m}_k(h)+\deg \left(\dfrac{I_{k-3}(h)}{h}\right)\\
&\leq\min\{k,n-k\}+k-3+1\\
&\leq n-2,\quad\quad \mbox{for}\ k\geq4,
\end{split}
\end{equation*}
\begin{equation*}
\begin{split}
\deg\left(\frac{\widetilde{m}_k(h)J_{k-3}(h)}{\sqrt{h(4+h)}}\right)&=\deg\widetilde{m}_k(h)+\deg \left(\frac{J_{k-3}(h)}{\sqrt{h(4+h)}}\right)\\
&\leq\min\{k,n+1-k\}+k-3\\
&\leq n-2,\quad\quad \mbox{for}\ k\geq4,\\
\end{split}
\end{equation*}
therefore,
\begin{equation*}\label{M1n}
\begin{split}
M(h)=&\sum_{k=0}^n\overline{m}_k(h)I_{k-3}(h)+
\sum_{k=0}^{n+1}\widetilde{m}_k(h)J_{k-3}(h)\\
=&h(\alpha_{0}+\alpha_{1}h)+\sum_{k=4}^n\overline{m}_k(h)I_{k-3}(h)\\
&+\sqrt{h(4+h)}(\beta_0+\beta_1h)+(\gamma_0+\gamma_1h+\gamma_2h^2)\ln(1+\alpha(h))+\sum_{k=4}^{n+1}\widetilde{m}_k(h)J_{k-3}(h)\\
=&h\sum_{k=0}^{n-2}\alpha_kh^k+\sqrt{h(4+h)}\sum_{k=0}^{n-2}\beta_kh^k+(\gamma_0+\gamma_1h+\gamma_2h^2)\ln(1+\alpha(h)).
\end{split}
\end{equation*}
\end{proof}
\subsection{Independence of the coefficients}\label{sub:S12}

To determine the independence of the coefficients in $M(h)$ obtained in \eqref{M13}, we give the following
lemma. The proof is similar to the computation of $M^{+}(h)$, and thus we omit here.
\begin{lemma}\label{lem:S1}
The following equalities hold.
\begin{equation*}
\begin{split}
&\iint_{D_h^+}\frac{\partial}{\partial x}\left(\frac{x^{2m+1}}{(1+y)^2}\right)\mathrm{d}x\ \mathrm{d}y=\left\{\begin{array}{ll}
h(c_{2m-1}h^{2m-1}+\cdots) & \mbox{if $m>0$,} \\
\frac{\pi}{2}h& \mbox{if $m=0$,}
\end{array} \right.\\
&\iint_{D_h^+}\frac{\partial}{\partial y}\left(\frac{x^{2m}}{(1+y)^2}\right)\mathrm{d}x\ \mathrm{d}y=\left\{\begin{array}{lll}
h(c_{2m-2}h^{2m-2}+\cdots) & \mbox{if $m>1$,} \\
-\frac{\pi}{4}h^2 & \mbox{if $m=1$,}\\
-\frac{\pi}{4}h(h+4) & \mbox{if $m=0$,}
\end{array} \right.\\
&\iint_{D_h^+}\frac{\partial}{\partial y}\left(\frac{x^{2m+1}}{(1+y)^2}\right)\mathrm{d}x\ \mathrm{d}y\\
=&\left\{\begin{array}{ll}
\sqrt{h(4+h)}(d_{2m-1}h^{2m-1}+\cdots)+(-1)^{m}\left(m(h+2)^2+2\right)\ln(1+a(h)) & \mbox{if $m>0$,} \\
\sqrt{h(4+h)}(-\frac{1}{2}h-1)+2\ln(1+a(h))& \mbox{if $m=0$,}
\end{array} \right.\\
&\iint_{D_h^+}\frac{\partial}{\partial x}\left(\frac{x^{2m}}{(1+y)^2}\right)\mathrm{d}x\ \mathrm{d}y\\
=&\left\{\begin{array}{ll}
\sqrt{h(4+h)}(d_{2m-2}h^{2m-2}+\cdots)+2(-1)^{m-1}m(h+2)\ln(1+a(h)) & \mbox{if $m>0$,} \\
0& \mbox{if $m=0$,}
\end{array} \right.\\
&\int_{\beta(h)}^{\alpha(h)}\frac{1}{(1+y)^2}\mathrm{d}y=J_{-2}(h)=\sqrt{h(4+h)},\\
\end{split}
\end{equation*}
\begin{equation*}
\begin{split}
 &\int_{\beta(h)}^{\alpha(h)}\frac{y}{(1+y)^2}\mathrm{d}y=J_{-1}(h)-J_{-2}(h)=2\ln(1+\alpha(h))-\sqrt{h(4+h)},\quad\quad\quad\quad\quad\quad\quad\quad
\end{split}
\end{equation*}
where
\begin{equation*}\begin{split}
&c_{2m-1}=\sum_{k=m}^{2m}C_m^{2m-k}(-1)^{k-m}C_{k-2,k-2}=
\frac{\sqrt{\pi}\ \Gamma(m+\frac{3}{2})}{(2m-1)2^{2m-1}\Gamma(m+1)}, \ m\geq2,
\quad c_1=\frac{3\pi}{8},\\
&c_{2m-2}=-\frac{2}{1+2m}\sum_{k=m}^{2m}C_m^{2m-k}(-1)^{k-m}C_{k-3,k-3}=
-\frac{2 \Gamma(m-\frac{3}{2}) \Gamma(m+\frac{3}{2})}{(2m+1) \Gamma(2m)},\ m\geq2,
\quad c_0=-\frac{\pi}{8},\\
&d_{2m-1}=-\frac{1}{1+m}\sum_{k=m+1}^{2m+2}C_{m+1}^{2m+2-k}(-1)^{k-m-1}\frac{1}{k-2}=
-\frac{\Gamma (m-1) \Gamma (m+2)}{(m+1) \Gamma (2m+1)},\ m\geq2, \quad d_1=\frac{3}{2},\\
&d_{2m-2}=\sum_{k=m}^{2m}C_{m}^{2m-k}(-1)^{k-m}\frac{1}{k-1}=
\frac{m\sqrt{\pi }\ \Gamma (m-1)}{2^{2m-1}\Gamma (m+\frac{1}{2})},\ m\geq2, \quad d_0=-2
\end{split}\end{equation*}
are nonzero constants, and the dots denote the lower-order terms of $h$.
\end{lemma}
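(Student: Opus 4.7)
The strategy is to reduce each double integral to a single integral over $[\beta(h),\alpha(h)]$ by Fubini, and then express that single integral as a linear combination of the basic integrals $I_k(h), J_k(h)$ whose closed forms were already computed in \eqref{Ik1}--\eqref{J1}. The region $D_h^+$ is parametrized as $\{(x,y):\beta(h)\le y\le\alpha(h),\ 0\le x\le\sqrt{h+hy-y^2}\}$, since on $L_h^+$ the level curve $H=h$ reads $x=\sqrt{h+hy-y^2}$. The inner $x$-integration is then elementary in every case: for $\partial_x(x^{k}/(1+y)^2)$ the fundamental theorem of calculus gives $(h+hy-y^2)^{k/2}/(1+y)^2$, while $\partial_y(x^{k}/(1+y)^2)=-2x^{k}/(1+y)^3$ followed by direct antidifferentiation of $x^k$ gives $-\tfrac{2(h+hy-y^2)^{(k+1)/2}}{(k+1)(1+y)^3}$. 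The parity of $k$ determines whether the resulting integrand carries the factor $\sqrt{h+hy-y^2}$ or not, which in turn determines whether the reduction will go through the $I_j$ family or the $J_j$ family.

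Next I would expand $(h+hy-y^2)^p=(h(1+y)-y^2)^p$ by the binomial theorem and substitute $u=1+y$, using $y^{2l}=(u-1)^{2l}$ and $h+hy-y^2=(u_2-u)(u-u_1)$ with $u_1u_2=1$, $u_1+u_2=h+2$. Each single integrand then becomes a Laurent polynomial in $u$ (possibly multiplied by $\sqrt{(u-u_1)(u_2-u)}$), so term-by-term integration yields a finite linear combination of the already-computed $I_j(h)$ and $J_j(h)$; plugging in the closed forms \eqref{Ik1}--\eqref{J1} gives the stated right-hand sides. The logarithm $\ln(1+\alpha(h))$ enters solely through $J_{-1}(h)=2\ln(1+\alpha(h))$, i.e.\ exactly when the Laurent expansion of the integrand contains a $u^{-1}$ term. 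A parity check shows this happens only for $\partial_x(x^{2m}/(1+y)^2)$ and $\partial_y(x^{2m+1}/(1+y)^2)$, consistent with the statement. The two remaining $y$-axis identities are immediate: $\int_{\beta(h)}^{\alpha(h)}(1+y)^{-2}\,dy=J_{-2}(h)=\sqrt{h(4+h)}$, and $\int_{\beta(h)}^{\alpha(h)}y(1+y)^{-2}\,dy=J_{-1}(h)-J_{-2}(h)$ via $y/(1+y)^2=1/(1+y)-1/(1+y)^2$.

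To pin down the closed-form leading-coefficient constants $c_{2m-1}, c_{2m-2}, d_{2m-1}, d_{2m-2}$ I would track the highest power of $h$ arising from the binomial factors $h^{p-l}$ (produced when expanding $(h(1+y)-y^2)^p$) together with the leading $h$-coefficients of the $I_j, J_j$ (i.e., $C_{k,k}$ and $s_{k,k}$ from \eqref{Ik1}, \eqref{Jk1}). The residual combinatorial sum is rewritten as a Beta-function integral via the substitution $\sqrt{(u-u_1)(u_2-u)}=t(u-u_1)$ already employed in Subsection \ref{sub:S11}; this yields integrals of the form $\int_0^\infty t^{2s}/(1+t^2)^{2r}\,dt=\tfrac{1}{2}B(s+\tfrac12,r-s-\tfrac12)$, and conversion by $B(p,q)=\Gamma(p)\Gamma(q)/\Gamma(p+q)$ produces the stated $\Gamma$-function closed forms. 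The special values $m=0,1$ are treated by direct computation, since the generic binomial identities degenerate at those endpoints.

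I expect the main obstacle to be the combinatorial bookkeeping. Verifying that the coefficient of $u^{-1}$ in the Laurent expansion collapses to the exact quadratics $(-1)^m(m(h+2)^2+2)$ and $2(-1)^{m-1}m(h+2)$ requires collecting contributions from three or four different binomial terms and relying on a cancellation, and matching the leading $h$-coefficients of the algebraic parts against the stated $\Gamma$-function forms involves nontrivial Beta--Gamma identities. Once each sum is recognized as a Beta integral the identification becomes mechanical, but isolating the correct sign and the exact Gamma form requires care; handling the boundary cases $m=0,1$ separately and checking consistency with the generic formulas is where most of the verification work will concentrate.
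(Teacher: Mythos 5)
Your proposal follows the same route the paper intends: the paper omits the proof of this lemma and refers back to the computation of $M^{+}(h)$ in Subsection \ref{sub:S11}, and that computation is exactly your Fubini reduction to single integrals in $y$, the substitution $u=1+y$, and term-by-term identification with the closed forms of $I_k$ and $J_k$, with the logarithm entering only through $J_{-1}$ for the two parity classes you identify. The one slip is your Beta-integral formula, which should read $\int_0^\infty t^{2s}(1+t^2)^{-2r}\,\mathrm{d}t=\tfrac12 B\left(s+\tfrac12,\,2r-s-\tfrac12\right)$; with that corrected, the $\Gamma$-function closed forms for the $c_k$ and $d_k$ follow as you describe.
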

\begin{proposition}\label{PS12}
The coefficients in $M(h)$ given in Proposition \ref{PS11} are independent.
\end{proposition}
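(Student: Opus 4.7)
The plan is to show that the linear map sending the perturbation parameters $\{a_{ij}^{\pm}, b_{ij}^{\pm}\}_{i+j\leq n}$ to the coefficient tuple $(\alpha_0,\dots,\alpha_{n-2},\beta_0,\dots,\beta_{n-2},\gamma_0,\gamma_1,\gamma_2)$ is surjective, so the listed coefficients can be prescribed independently. The argument splits along the three function-type channels appearing in Proposition \ref{PS11}: the polynomial-in-$h$ part, the $\sqrt{h(4+h)}$-times-polynomial part, and the $\ln(1+\alpha(h))$-times-polynomial part. These three families are linearly independent as real functions of $h$ on the period annulus, because $\ln(1+\alpha(h))$ is transcendental over the algebraic ring generated by $h$ and $\sqrt{h(4+h)}$ (this is immediate from comparing asymptotic behavior of derivatives near $h=0^+$), so it suffices to handle each channel separately.

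For the polynomial channel, I would use Lemma \ref{lem:S1}, which shows that the monomials $a^{\pm}_{2m+1,0}\,x^{2m+1}$ in $P^{\pm}$ (via $\partial_x$) and $b^{\pm}_{2m,0}\,x^{2m}$ in $Q^{\pm}$ (via $\partial_y$) produce pure polynomial-in-$h$ contributions whose leading powers are $h^{2m}$ and $h^{2m-1}$ with known nonzero coefficients $c_{2m-1}$ and $c_{2m-2}$. As $m$ varies, the leading powers exhaust $h^0, h^1, \dots, h^{n-2}$, yielding an upper-triangular linear system that realizes any prescribed $(\alpha_0,\dots,\alpha_{n-2})$ by back-substitution.

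For the $\sqrt{h(4+h)}$ and $\ln$ channels, the relevant monomials are $a^{\pm}_{2m,0}\,x^{2m}$ in $P^{\pm}$ and $b^{\pm}_{2m+1,0}\,x^{2m+1}$ in $Q^{\pm}$; each contributes simultaneously to both channels, with a log-part that is a polynomial of degree at most $2$. First I would pin down $(\gamma_0,\gamma_1,\gamma_2)$ via three specific low-degree choices: Lemma \ref{lem:S1} gives log-parts $2$, $2(h+2)$, $-((h+2)^2+2)$ for $\partial_y(x/(1+y)^2)$, $\partial_x(x^2/(1+y)^2)$, $\partial_y(x^3/(1+y)^2)$ respectively, and these three polynomials in $h$ are linearly independent (the $3\times 3$ coefficient matrix has determinant $-4$), so they span the space of polynomials of degree $\leq 2$. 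The higher-degree monomials $a^{\pm}_{2m,0}$ with $m\geq 2$ and $b^{\pm}_{2m+1,0}$ then feed the square-root channel with distinct leading powers $h^{2m-2}$ and $h^{2m-1}$ and nonzero coefficients $d_{2m-2}, d_{2m-1}$, while their log-byproducts are again polynomials of degree $\leq 2$ that can be absorbed by readjusting the three low-degree log-generating monomials.

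The main obstacle is the careful inductive bookkeeping of this triangular elimination: each high-degree monomial introduced to control some $\beta_k$ simultaneously perturbs all three $\gamma_j$'s, so the argument must proceed from highest $k$ downward, fixing $\beta_{n-2}$ first by the unique monomial whose leading term is of order $h^{n-2}\sqrt{h(4+h)}$, then subtracting off its spurious log-part via the three log-generating monomials, and iterating. Axis-integral contributions from $a^{\pm}_{0j}\,y^j$ do not disrupt this scheme, because they only produce combinations of $J_{-1}(h)=2\ln(1+\alpha(h))$ and $J_{-2}(h)=\sqrt{h(4+h)}$ with controlled polynomial multipliers, which fit cleanly into the same two channels. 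The piecewise freedom $(a^{+}\neq a^{-},\ b^{+}\neq b^{-})$ doubles the parameter count relative to the smooth setting of \cite{LLLZ}, making surjectivity a comfortable matter once the triangular core is in place.
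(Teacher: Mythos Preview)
Your overall strategy matches the paper's: use Lemma~\ref{lem:S1} to read off the contribution of each monomial $x^iy^j$ to the three function-type channels, and then argue that the linear map from perturbation parameters to the coefficient tuple is onto. The paper, however, does not attempt the inductive back-substitution you describe; it simply selects a specific square set of parameters and computes the Jacobian determinant directly, obtaining the closed form $-\pi c_1\prod_{k=2}^{n-2}c_k d_k\neq 0$ for $n\ge 3$ (and explicit nonzero values for small $n$). This bypasses any delicate bookkeeping.

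Your inductive scheme has a concrete gap at the bottom of the $\beta$-ladder. The three ``log-generators'' you name, $b^{\pm}_{1,0},a^{\pm}_{2,0},b^{\pm}_{3,0}$, do span the $\gamma$-space, but they also contribute nontrivially to $\beta_0,\beta_1$; and your ``higher-degree'' monomials $a^{\pm}_{2m,0}$ with $m\ge 2$ and $b^{\pm}_{2m+1,0}$ only reach $\beta_k$ for $k\ge 2$. After your iteration you have spent $3+(n-3)=n$ effective parameters to hit $n+2$ targets, so $\beta_0,\beta_1$ remain uncontrolled. No single monomial in Lemma~\ref{lem:S1} touches $\beta_1$ without also touching some $\gamma_j$, so there is no clean extra ``$\beta_1$-generator'' to append. (The remark that piecewise freedom ``doubles the parameter count'' is also a bit misleading: for each monomial only one combination $a^+\!\pm a^-$ survives, so you gain access to both parities of $i$ but not two independent channels per monomial.)

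The fix is exactly what the paper does: enlarge the parameter set by the axis-integral terms $a^+_{0,0},a^+_{0,1}$ and drop the attempt at a purely triangular argument for the low block. One is then left with a $5\times 5$ sub-matrix for $(\beta_0,\beta_1,\gamma_0,\gamma_1,\gamma_2)$ against $(a^+_{2,0},b^+_{3,0},b^+_{1,0},a^+_{0,0},a^+_{0,1})$, which is \emph{not} triangular but has nonzero determinant; combined with the genuinely triangular $\alpha$-block and the triangular tail of the $\beta$-block, this yields the product formula the paper states. So your plan is salvageable, but the endgame requires an explicit $5\times 5$ determinant computation rather than the back-substitution you sketch.
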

\begin{proof}
By Lemma \ref{lem:S1}, for $n=0$,
\[
\frac{\partial(\alpha_0,\beta_0)}
{\partial(b^+_{0,0},a^+_{0,0})}
=-\dfrac{\pi}{4}\neq0,
\]
for $n=1$,
\[
\frac{\partial(\alpha_0,\alpha_1,\beta_0,\beta_1,\gamma_0)}
{\partial(a^+_{1,0},b^+_{0,0},a^+_{0,0},b^+_{1,0},a^+_{0,1})}
=\dfrac{\pi^2}{8}\neq0,
\]
for $n=2$,
\[
\frac{\partial(\alpha_0,\alpha_1,\beta_0,\beta_1,\gamma_0,\gamma_1)}
{\partial(a^+_{1,0},b^+_{0,0},a^+_{0,0},b^+_{1,0},a^+_{0,1},a^+_{2,0})}
=\dfrac{\pi^2}{4}\neq0,
\]
for $n\geq3$ odd,
\[
\frac{\partial(\alpha_0,\alpha_1,\alpha_2,\cdots,\alpha_{n-2},\beta_0,\beta_1,\beta_2,\cdots,\beta_{n-2},\gamma_0,\gamma_1,\gamma_2)}
{\partial(a^+_{1,0},a^+_{3,0},b^+_{4,0},\cdots,a^+_{n,0},
a^+_{2,0},b^+_{3,0},a^+_{4,0}\cdots,b^+_{n,0},b^+_{1,0},a^+_{0,0},a^+_{0,1})}
=-\pi c_1\prod_{k=2}^{n-2}c_{k}d_k\neq0,
\]
and for $n\geq4$ even,
\[
\frac{\partial(\alpha_0,\alpha_1,\alpha_2,\cdots,\alpha_{n-2},\beta_0,\beta_1,\beta_2,\cdots,\beta_{n-2},\gamma_0,\gamma_1,\gamma_2)}
{\partial(a^+_{1,0},a^+_{3,0},b^+_{4,0},\cdots,b^+_{n,0},
a^+_{2,0},b^+_{3,0},a^+_{4,0},\cdots,a^+_{n,0},b^+_{1,0},a^+_{0,0},a^+_{0,1})}
=-\pi c_1\prod_{k=2}^{n-2}c_{k}d_k\neq0,
\]
thus the parameters $\alpha_k, \beta_k, k=0,1,\cdots,n-2$, and $\gamma_0, \gamma_1, \gamma_2$ are independent.
\end{proof}
\subsection{Zeros of $M(h)$}
Finally, we estimate the number of zeros of $M(h)$ obtained in Proposition \ref{PS11}.
\begin{proposition}
$H_1(0)=1$, $H_1(n)=n+3$ for $n=1,2,3$, and $H_1(n)=2n$ for $n\geq4$.
\end{proposition}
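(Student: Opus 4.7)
I would establish matching upper and lower bounds on the number of zeros of $M(h)$ on $(0,\infty)$, treating the three regimes $n=0$, $1\le n\le 3$, and $n\ge 4$ separately.

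The \emph{lower bound} is uniform across regimes. Proposition \ref{PS11} represents $M(h)$ as a linear combination of $|\mathcal{F}|$ basis functions ($|\mathcal{F}|=2$ for $n=0$; $|\mathcal{F}|=n+4$ for $n=1,2,3$; $|\mathcal{F}|=2n+1$ for $n\ge 4$) whose coefficients are independent by Proposition \ref{PS12}. Since at least one basis element (for example $h$, or $\sqrt{h(4+h)}$) has constant sign on $(0,\infty)$, Lemma \ref{le:CGP} furnishes parameters yielding $|\mathcal{F}|-1$ simple zeros; a count shows this equals $1$, $n+3$, or $2n$ in the respective regimes, matching the claim.

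The \emph{upper bound} for $n=0$ is immediate from the factorisation $M(h)=\sqrt{h(4+h)}\bigl(\alpha_0\sqrt{h(4+h)}+\beta_0\bigr)$ and the strict monotonicity of $\sqrt{h(4+h)}$ on $(0,\infty)$. For $1\le n\le 3$, since $|\mathcal{F}|\le 7$, I would directly compute the ordered Wronskians $W_1,\ldots,W_{|\mathcal{F}|}$ of $\mathcal{F}$ and verify that none vanishes on $(0,\infty)$; Lemma \ref{ECT3} then gives the sharp bound $n+3$.

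The main obstacle is the upper bound $2n$ for $n\ge 4$. Here I would follow the step-by-step reduction described in the introduction. A short computation gives the key identity $\bigl(\ln(1+\alpha(h))\bigr)'=1/\sqrt{h(4+h)}$ (equivalent to $\alpha'(h)/(1+\alpha(h))=1/\sqrt{h(4+h)}$), so three differentiations kill the logarithm since $\deg L_2\le 2$. Writing the outcome as $M^{(3)}(h)=A(h)+B(h)\sqrt{h(4+h)}$ with rational $A,B$ and exploiting the identity $(A+Bw)(A-Bw)=A^2-h(4+h)B^2$ converts the zero-counting problem on $w>0$ into that of a single polynomial in $h$. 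Lemma \ref{lem:P} keeps the polynomial degree tight across differentiation steps, and three applications of Rolle's theorem then translate zeros of $M^{(3)}$ back into zeros of $M$.

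The delicate point is the sharp degree accounting in this final step: naive bookkeeping produces a polynomial of degree roughly $2n+5$, whereas the claim requires an effective count of $2n-3$. Closing this gap will rely on the special structure that $L_2$ carries only three free parameters (not four) and on the identity $(1+\alpha)(1+\beta)=1$ satisfied by the two $y$-axis intersections, which forces cancellations in the squared expression $A^2-h(4+h)B^2$. I expect this degree-tracking to be the only genuinely subtle part of the proof.
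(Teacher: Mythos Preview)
Your lower-bound argument and the $n=0$ case are fine and match the paper. The difficulty is the upper bound for $n\ge 1$, and especially for $n\ge 4$, where your plan diverges from the paper and contains a real gap.

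For $n\ge 4$ you propose, after three differentiations, to write $M^{(3)}=A+B\sqrt{h(4+h)}$ and pass to the polynomial $A^2-h(4+h)B^2$ via the conjugate trick. Carrying out the algebra, $M^{(3)}(h)=P_{n-4}(h)+Q_{n+1}(h)/(h(4+h))^{5/2}$, so the relevant product has numerator $Q_{n+1}^2-h(4+h)\bigl((h(4+h))^2P_{n-4}\bigr)^2$, generically of degree $2n+2$. With Rolle and $M(0)=0$ this yields only $H_1(n)\le 2n+4$, four too many. Your hoped-for cancellations from ``$L_2$ has only three parameters'' or ``$(1+\alpha)(1+\beta)=1$'' do not close this gap: the first fact was already used to kill the logarithm, and the second is an identity for the orbit, not a constraint on the free perturbation coefficients $\alpha_k,\beta_k,\gamma_k$. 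The conjugate trick is simply too blunt here because it counts the spurious zeros of the companion $A-B\sqrt{h(4+h)}$ along with the genuine ones.

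The paper's remedy is different and clean: instead of squaring, differentiate $M^{(3)}$ a further $n-3$ times. The polynomial part $P_{n-4}$ is annihilated, while Lemma~\ref{lem:P} (applied with $p=q=5/2$, so $p+q=5$ and $n+1-(p+q)=n-3$) guarantees that the rational part becomes $\widetilde Q_{n+1}(h)/\bigl(h^{n-1/2}(4+h)^{n-1/2}\bigr)$ with the \emph{same} numerator degree $n+1$. Hence $M^{(n)}$ has at most $n+1$ zeros, and unwinding Rolle $n$ times together with $M(0)=0$ gives the sharp bound $2n$. This is also how the paper handles $1\le n\le 3$: for those $n$ the polynomial piece is absent from the outset, so $M^{(3)}$ itself has the form $Q_{n+1}/(h(4+h))^{5/2}$ and the bound $n+3$ drops out immediately---no Wronskian computation is needed. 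Your proposed ECT verification for $n=1,2,3$ would involve up to seven Wronskians mixing polynomials, $\sqrt{h(4+h)}$, and $\ln(1+\alpha(h))$; it may well succeed, but you have not carried it out, and the paper's unified differentiation argument is both shorter and avoids that verification entirely.
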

\begin{proof}
For $n=0$, it is easy to verify that $M(h)$ has at most 1 zero in $(0,+\infty)$, i.e., $H_1(0)\leq1$.

For $n\geq1$, we eliminate the
logarithmic function first by taking derivatives.
\begin{equation*}
M^{(3)}(h)=\dfrac{1}{h^2(4+h)^2\sqrt{h(4+h)}}\sum_{i=0}^{n+1}\widetilde{\beta}_ih^i,\quad \mbox{for} \ n=1,2,3,
\end{equation*}
and
\begin{equation*}
M^{(3)}(h)=\sum_{i=0}^{n-4}\widetilde{\alpha}_ih^i+\dfrac{1}{h^2(4+h)^2\sqrt{h(4+h)}}\sum_{i=0}^{n+1}\widetilde{\beta}_ih^i,
\quad \mbox{for}\ n\geq4.
\end{equation*}
Obviously, $M^{(3)}(h)$ has at most $n+1$ zeros when $n=1,2,3$. Then, it follows from $M(0)=0$ that
\[
H_1(n)\leq n+1+3-1=n+3.
\]

For $n\geq4$, let $F(h)=M^{(3)}(h)$, then it follows from Lemma \ref{lem:P} that
\begin{equation*}
F^{(n-3)}(h)=\dfrac{1}{h^{n-1/2}(4+h)^{n-1/2}}\sum_{i=0}^{n+1}\bar{\beta}_ih^i.
\end{equation*}
Obviously, $F^{(n-3)}(h)$ has at most $n+1$ zeros. Thus, by Rolle's Theorem, $F(h)$, as well as $M^{(3)}(h)$, has at most
$2n-2$ zeros in $(0,+\infty)$. Note that $M(0)=0$, thus
for $n\geq4$,
\[
H_1(n)\leq 2n-2+3-1=2n.
\]

Note that the functions $h^{k+1}, h^k\sqrt{h(4+h)}$, $k=0,1,\cdots, n-2$ and $h^k\ln(1+\alpha(h)), k=0,1,2$ are linearly independent. Hence by Lemma \ref{le:CGP} and Proposition \ref{PS12},
$H_1(0)\geq1$, $H_1(n)\geq n+3$ for $n=1,2,3$ and $H_1(n)\geq 2n$ for $n\geq4$. The Proposition follows.
\end{proof}

\section{Zeros of $M(h)$ for system $S_2$}\label{sec:S2}
Consider the piecewise smooth polynomial perturbations of degree $n$ of system $S_2$:
\begin{equation}
\left(\begin{array}{ll}\dot{x}\\[2ex] \dot{y}\end{array}\right)=\label{S2}\left\{\begin{array}{ll}
\left(\begin{array}{ll}-y+x^2+\frac{\varepsilon }{2}P^+(x,y)\\
x(1+y)+\frac{\varepsilon }{2}Q^+(x,y)\end{array}\right), & \mbox{ $x> 0$,}
\\[2ex]
\left(\begin{array}{ll}-y+x^2+\frac{\varepsilon }{2}P^-(x,y)\\
x(1+y)+\frac{\varepsilon }{2}Q^-(x,y)\end{array}\right), & \mbox{ $x< 0$,}
\end{array} \right.
\end{equation}
where $P^{\pm}(x,y)$ and $Q^{\pm}(x,y)$ are given by \eqref{PQ}. For ${\varepsilon=0}$, the first integral of system \eqref{S2} is
\[
H=\frac{2y+1-x^2}{(1+y)^2},
\]
and the integrating factor is $R=\frac{2}{(1+y)^3}$. Here $L_h^+=\{x\geq0|H=h,h\in(0,1)\}$, $L_h^-=\{x\leq0|H=h,h\in(0,1)\}$, $\mathcal{A}_h=(0,\alpha(h))$ and $\mathcal{B}_h=(0,\beta(h))$, where
\begin{equation}
\alpha(h)=\frac{1-h+\sqrt{1-h}}{h}, \quad \beta(h)=\frac{1-h-\sqrt{1-h}}{h}.
\end{equation}

\subsection{Expression of $M(h)$ and independence of coefficients}
Similarly as the calculus of $M^+(h)$ in subsection \ref{sub:S11}, by \eqref{Mh}, the first order Melnikov function of system \eqref{S2}
is
\begin{equation}\label{M2}
\begin{split}
M(h)=&\iint_{D_h^+}\left[\frac{\partial}{\partial x}\left(\frac{P^+}{(1+y)^3}\right)+\frac{\partial}{\partial y}\left(\frac{Q^+}{(1+y)^3}\right)\right]\mathrm{d}x\ \mathrm{d}y+\int_{b(h)}^{a(h)}\frac{P^+(0,y)}{(1+y)^3}\mathrm{d}y\\
&+\iint_{D_h^-}\left[\frac{\partial}{\partial x}\left(\frac{P^-}{(1+y)^3}\right)+\frac{\partial}{\partial y}\left(\frac{Q^-}{(1+y)^3}\right)\right]\mathrm{d}x\ \mathrm{d}y-\int_{a(h)}^{b(h)}\frac{P^-(0,y)}{(1+y)^3}\mathrm{d}y\\
=&\sum_{k=0}^n\overline{m}_k(h)I_{k-4}(h)+
\sum_{k=0}^{n+1}\widetilde{m}_k(h)J_{k-4}(h),
\end{split}
\end{equation}
where
\[
I_k(h)=\int_{\beta(h)}^{\alpha(h)}(1+y)^{k}\sqrt{1+2y-h(1+y)^2}\mathrm{d}y,\quad
J_k(h)=\int_{\beta(h)}^{\alpha(h)}(1+y)^{k}\mathrm{d}y,
\]
and $\overline{m}_{k}$, $\widetilde{m}_{k}$ are polynomials of $h$ with degree
\begin{equation}\begin{split}\label{mk}
\deg\overline{m}_k\leq[\frac{k}{2}],\quad \deg\widetilde{m}_k\leq[\frac{k}{2}],
\end{split}\end{equation}
which are determined by Newton's formula and some qualitative analysis, see \cite{LLLZ} for details.
It is worth noting that when $n=0$, $\widetilde{m}_0(h)=0$.

We exploit the method than in subsection \ref{sub:S11} to obtain the expressions of $I_k(h)$ and $J_k(h)$ first.
\begin{equation}\begin{split}\label{Ik2}
I_{k}(h)&=\sqrt{h}(1-h)\sum_{i=[(k+1)/2]}^{k}C_{i,k}h^{-i-2},\quad \quad \quad \quad  k>0,
\end{split}\end{equation}
where $C_{i,k}, i=[(k+1)/2],[(k+1)/2]+1,\cdots,k$ are constants, and
\[
C_{k,k}=2^{k+2}\int_{0}^{\infty}\frac{t^{2}(1+t^{2k})}{(1+t^2)^{k+3}}\mathrm{d}t>0,\quad
C_{[(k+1)/2],k}\neq0,
\]
\begin{equation}\begin{split}\label{I2}
&I_{-4}(h)=I_{-3}(h)=\frac{(1-h)\pi}{2},\\
&I_{-2}(h)=(1-\sqrt h)\pi,\\
&I_{-1}(h)=\frac{(1-\sqrt h)\pi}{\sqrt h},\\
&I_0(h)=\frac{(1-h)\pi}{2h^{3/2}}.
\end{split}\end{equation}
\begin{equation}\begin{split}\label{Jk2}
J_k(h)&=\frac{2\sqrt{1-h}}{(k+1)h^{k+1}}\sum_{j=0}^{[k/2]}C_{k+1}^{2j+1}(1-h)^j, \quad k>0,
\end{split}\end{equation}
where $C_{k+1}^{2j+1}, j=0,1,\cdots,[k/2]$ are combinatorial numbers and
\begin{equation}\begin{split}\label{J2}
&J_{-4}(h)=\frac{2}{3}\sqrt{1-h}(4-h),\\
&J_{-3}(h)=J_{-2}(h)=2\sqrt{1-h},\\
&J_{-1}(h)=\ln\left(\frac{1+\sqrt{1-h}}{1-\sqrt{1-h}}\right),\\
&J_0(h)=\frac{2\sqrt{1-h}}{h}.
\end{split}\end{equation}
Thus, using \eqref{M2}-\eqref{J2}, we have
\begin{proposition}\label{PS21} The first order Melnikov function $M(h)$ for system $S_2$ is:
\begin{equation*}
\begin{split}
M(h)=&\alpha_0(1-h)+\beta_0\sqrt{1-h},\quad n=0,\\
M(h)=&\alpha_{0}(1-h)+\sqrt{1-h}(\beta_0+\beta_1h),\quad n=1,\\
M(h)=&\sum_{k=0}^3\alpha_{k}h^{k/2}+\sqrt{1-h}(\beta_0+\beta_1h)
+J_{-1}(h)(\gamma_0+\gamma_1h),\quad n=2,\\
M(h)=&\sum_{k=0}^3\alpha_{k}h^{k/2}+\sum_{k=2-n}^{-1}\alpha_{k}h^{k+1/2}+\sqrt{1-h}\sum_{k=2-n}^1\beta_k h^k+J_{-1}(h)(\gamma_0+\gamma_1h), \quad n\geq3,\\
\end{split}
\end{equation*}
where $J_{-1}(h)$ is given in \eqref{J2}, and $\alpha_k, k=2-n,3-n,\cdots,3$, $\beta_k, k=2-n,3-n,\cdots,1$ and $\gamma_0, \gamma_1$ are constants, and
$\alpha_0,\beta_0$ are independent for $n=0$, $\alpha_0,\beta_0,\beta_1$ are independent for $n=1$, and the coefficients except some $\alpha_k$ in $M(h)$ are independent for $n\geq2$.
\end{proposition}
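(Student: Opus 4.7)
The proof plan closely mirrors that of Proposition \ref{PS11} and Proposition \ref{PS12} for $S_1$. Starting from the expression
\[
M(h)=\sum_{k=0}^n \overline m_k(h)\, I_{k-4}(h)+\sum_{k=0}^{n+1}\widetilde m_k(h)\, J_{k-4}(h)
\]
given in \eqref{M2}, I would substitute the closed forms of $I_k(h)$ and $J_k(h)$ supplied by \eqref{Ik2}--\eqref{J2} together with the degree bound $\deg\overline m_k,\,\deg\widetilde m_k\leq[k/2]$ from \eqref{mk}, and then group the resulting terms by functional type: pure (half-integer) powers of $h$, $\sqrt{1-h}$ times a Laurent polynomial in $h$, and $J_{-1}(h)$ times a polynomial in $h$. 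The claim is that after this grouping the exponent ranges collapse to exactly those appearing in the stated formula.

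For $n=0,1,2$ only the elementary $I_{-4},\ldots,I_{-1}$ (and possibly $I_0$) together with $J_{-4},\ldots,J_0$ enter the sum, and a direct substitution, using in particular the coincidences $I_{-4}=I_{-3}$ and $J_{-3}=J_{-2}$, yields the stated expressions. For $n\geq 3$, the two key points are: (i) the only contributions to the logarithmic part $J_{-1}(h)$ come from $\widetilde m_3$, so they must be of the form $(\gamma_0+\gamma_1 h)J_{-1}(h)$; (ii) for each index $k\geq 4$ the two products
\[
\frac{\overline m_k(h)\,I_{k-4}(h)}{\sqrt{h}\,(1-h)} \qquad\text{and}\qquad \frac{\widetilde m_k(h)\,J_{k-4}(h)}{\sqrt{1-h}}
\]
are Laurent polynomials in $h$ whose exponent ranges are dictated by the bound $[k/2]$ combined with the $h^{-(k-3)}$ factor arising from \eqref{Ik2} and \eqref{Jk2}. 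Tracking the union of these ranges as $k$ runs from $4$ to $n$ (respectively $n+1$) one sees that the additional half-integer powers fill exactly $h^{k+1/2}$ for $k=2-n,\ldots,-1$, while the $\sqrt{1-h}$-coefficient fills exactly $h^k$ for $k=2-n,\ldots,1$; together with the elementary block this yields the claimed formula.

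For the independence of the coefficients, I would develop a lemma analogous to Lemma \ref{lem:S1} that evaluates the contribution of the monomial perturbations $a^\pm_{ij}x^iy^j$ and $b^\pm_{ij}x^iy^j$ through $\partial_x$ and $\partial_y$ of the corresponding one-forms divided by $(1+y)^3$, yielding explicit nonzero constants attached to each basic function $h^{k/2}$, $\sqrt{1-h}\,h^k$, and $h^j J_{-1}(h)$. Then, for each coefficient that is supposed to be independent, I would select a single monomial perturbation parameter realizing it (preferring $a^+$-type as in Proposition \ref{PS12}) and exhibit a square Jacobian sub-matrix that is (up to reordering) upper-triangular with nonzero diagonal entries. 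For $n=0,1$ this is only a $2\times 2$ or $3\times 3$ check, while for $n\geq 2$ the phrase ``except some $\alpha_k$'' records precisely the linear relations forced by $I_{-4}=I_{-3}$ and $J_{-3}=J_{-2}$, which collapse two of the lowest-order combinations of $\overline m_k,\widetilde m_k$ into a single independent direction.

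The main obstacle is the bookkeeping of exponents for $n\geq 3$: unlike the $S_1$ situation, where $I_k$ produces only non-negative powers of $h$, here both $I_k$ and $J_k$ for $k\geq 1$ carry negative powers of $h$, so the Laurent structure of $M(h)$ is genuinely richer and one must verify that the contributions from different indices $k$ telescope exactly into the stated intervals without gaps or overlaps outside those already recorded. A secondary subtlety is to pinpoint which $\alpha_k$ fail to be independent and to adjust the independence argument accordingly; this is where the coincidences between the lowest values of $I_k$ and $J_k$ must be handled with care in the Jacobian computation.
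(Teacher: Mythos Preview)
Your overall strategy---substitute the closed forms of $I_k$ and $J_k$, group by functional type, then establish independence via a Jacobian computation---is exactly what the paper does (the paper in fact omits the straightforward computation of $M(h)$ entirely and goes directly to the independence argument).

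There is, however, a concrete error in your diagnosis of why ``some $\alpha_k$'' fails to be independent. You attribute this to the coincidences $I_{-4}=I_{-3}$ and $J_{-3}=J_{-2}$, suggesting they ``collapse two of the lowest-order combinations \ldots\ into a single independent direction.'' That is not the source. Those coincidences affect how the $\overline m_k,\widetilde m_k$ combine into the individual $\alpha_k$, but they do not by themselves force any linear relation among the $\alpha_k$ as functions of the perturbation parameters. The paper's argument is much cleaner: since every $I_k$ and every $J_k$ vanishes at $h=1$ (each carries a factor $(1-h)$ or $\sqrt{1-h}$, and $J_{-1}(1)=\ln 1=0$), the identity \eqref{M2} forces $M(1)\equiv 0$ identically in the perturbation parameters. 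Evaluating the stated expression at $h=1$ kills the $\sqrt{1-h}$ and $J_{-1}$ blocks and sets every half-integer power of $h$ equal to $1$, yielding the single relation
\[
\sum_{k=2-n}^{3}\alpha_k=0.
\]
This is \emph{one} relation, not two, and it is exactly what ``except some $\alpha_k$'' records. The paper then checks, via explicit nonzero Jacobians for $n=0,1,2$ and $n\geq 3$, that the remaining coefficients (all $\beta_k$, $\gamma_0,\gamma_1$, and all but one of the $\alpha_k$) are independent. Your plan to build an analogue of Lemma~\ref{lem:S1} is fine for this last step, but you should replace your explanation of the dependency by the $M(1)=0$ observation.
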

\begin{proof}
The computation of $M(h)$ is straightforward, and thus is omit here. In the following, we prove the independence of the coefficients.
Since $I_k(1)=0$ and $J_k(1)=0$ for any $k\geq-4$, they mean that
\[
M(1)=\sum_{k=2-n}^3\alpha_k=0,
\]
which implies that $\alpha_k, k=2-n,3-n,\cdots,3$ are linearly dependent, and some $\alpha_k$ can be expressed in others.
More concretely, similarly as that in subsection \ref{sub:S12}, we can obtain that
\[
\frac{\partial(\alpha_0,\beta_0)}
{\partial(b^+_{0,0},a^+_{0,0})}
=-3\pi\neq0,\quad \mbox{for}\ n=0,
\]
\[
\frac{\partial(\alpha_0,\beta_0,\beta_1)}
{\partial(b^+_{0,0},a^+_{0,0},b^+_{1,0})}
=-6\pi\neq0,\quad \mbox{for}\ n=1,
\]
\[
\frac{\partial(\alpha_1,\alpha_2,\alpha_3,\beta_0,\beta_1,\gamma_0,\gamma_1)}
{\partial(b^+_{0,2},b^+_{0,0},b^+_{2,0},a^+_{0,0},b^+_{1,0},a^+_{0,2},a^+_{2,0})}
=6\pi^3\neq0,\quad \mbox{for}\ n=2,
\]
and for $n\geq3$,
\[
\frac{\partial(\alpha_3,\alpha_2,\alpha_1,\alpha_{-1},\alpha_{-2},\cdots,\alpha_{2-n},
\beta_1,\beta_0,\beta_{-1},\cdots,\beta_{2-n},\gamma_0,\gamma_1)}
{\partial(b^+_{2,0},b^+_{0,0},b^+_{0,2},a^+_{1,2},b^+_{0,4},\cdots,b^+_{0,n},
b^+_{1,0},a^+_{0,0},a^+_{0,3}\cdots,a^+_{0,n},a^+_{0,2},a^+_{2,0})}
\neq0.
\]
\end{proof}

\subsection{Zeros of $M(h)$}
Consider the first Melnikov function $M(h)$ obtained in Proposition \ref{PS21}. Then
\begin{proposition}
$H_2(n)=n+1$ for $n=0,1$, and $H_2(n)=2n+2$ for $n\geq2$.
\end{proposition}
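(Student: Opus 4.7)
The plan follows the template established for $S_1$ in Section \ref{sec:S1}: matching lower bounds come from Lemma \ref{le:CGP} applied to Proposition \ref{PS21}, while the upper bounds rely on a derivative-Rolle argument refined by Lemma \ref{lem:P}.

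For $n=0$ and $n=1$, I would substitute $u=\sqrt{1-h}\in(0,1)$. A direct computation gives
\[
M(h)=u(\alpha_0 u+\beta_0) \text{ for } n=0, \qquad M(h)=u\bigl(-\beta_1 u^2+\alpha_0 u+\beta_0+\beta_1\bigr) \text{ for } n=1,
\]
so that $M$ has at most $1$ and $2$ positive zeros, respectively. The independence of $\alpha_0,\beta_0$ (and $\beta_1$ for $n=1$) from Proposition \ref{PS21}, combined with Lemma \ref{le:CGP} using $\sqrt{1-h}$ as a constant-sign element, shows these bounds are sharp.

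For $n\ge 2$, the lower bound $H_2(n)\ge 2n+2$ is obtained by applying Lemma \ref{le:CGP} to the $2n+3$ linearly independent functions appearing in Proposition \ref{PS21} (after using the constraint $M(1)=\sum_{k=2-n}^{3}\alpha_k=0$), again with $\sqrt{1-h}$ as the constant-sign element. For the upper bound $H_2(n)\le 2n+2$, I would exploit that the log coefficient $\gamma_0+\gamma_1 h$ is linear and $J'_{-1}(h)=-1/(h\sqrt{1-h})$, so two differentiations eliminate the logarithm, making $M''(h)$ a purely algebraic combination of $\sqrt{h}$, $\sqrt{1-h}$, $1/\sqrt{1-h}$ and $1/(1-h)^{3/2}$ multiplied by Laurent polynomials in $h$. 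Clearing denominators by multiplying by $h^n(1-h)^{3/2}$ yields a decomposition of the form
\[
h^n(1-h)^{3/2} M''(h)=\Pi_1(h)+\sqrt{h(1-h)^3}\,\Pi_2(h),
\]
with $\deg\Pi_1\le n+1$ and $\deg\Pi_2\le n-1$. Squaring and analysing the resulting polynomial $\Pi_1^2-h(1-h)^3\Pi_2^2$ (of degree at most $2n+2$), together with the application of Lemma \ref{lem:P} (which applies since $p=q=1/2$ satisfies $p+q=1\in\mathbb{Z}$) and a careful accounting of forced roots at the endpoints, bounds the number of zeros of $M''$ on $(0,1)$ by $2n+1$. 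Rolle's theorem, combined with the zero of $M$ at $h=1$, then yields $H_2(n)\le 2n+2$.

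The principal obstacle is coping with the two distinct algebraic branches $\sqrt{h}$ and $\sqrt{1-h}$ simultaneously — unlike the single factor $\sqrt{h(4+h)}$ in the $S_1$ analysis — together with the logarithmic term, which complicates the derivative ladder and forces careful degree bookkeeping at every step. Moreover, the sharp bound $2n+2$ is one less than the naive squared-polynomial count, so the constraint $M(1)=0$ (and/or the boundary behavior of $M$ at $h=0$) must be exploited precisely to extract the final zero needed for the matching upper bound.
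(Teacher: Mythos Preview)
Your lower-bound arguments and the handling of $n=0,1$ are fine and match the paper's reasoning. The gap is in the upper bound for $n\ge 2$: your squaring approach loses one zero and you do not have a concrete way to recover it. From $h^n(1-h)^{3/2}M''(h)=\Pi_1(h)+\sqrt{h(1-h)^3}\,\Pi_2(h)$ with $\deg\Pi_1\le n+1$, $\deg\Pi_2\le n-1$, the polynomial $\Pi_1^2-h(1-h)^3\Pi_2^2$ indeed has degree $\le 2n+2$, but its zeros in $(0,1)$ comprise both the zeros of $\Pi_1+\sqrt{h(1-h)^3}\,\Pi_2$ \emph{and} those of $\Pi_1-\sqrt{h(1-h)^3}\,\Pi_2$. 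There is no forced root at either endpoint ($P(0)=\Pi_1(0)^2$ and $P(1)=\Pi_1(1)^2$ need not vanish), so you only get $\#\{M''=0\}\le 2n+2$, hence $\#\{M=0\}\le 2n+3$ after using $M(1)=0$ --- one too many. Your invocation of Lemma~\ref{lem:P} alongside the squaring is not tied to an actual computation, and with $p=q=1/2$ it does not interact with the squared polynomial in any useful way.

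The paper avoids squaring altogether. The key move is to multiply $M''(h)$ not by $h^n(1-h)^{3/2}$ but by $h^{n-1/2}$, which turns one of the two irrational pieces into a genuine polynomial:
\[
F(h)=h^{n-1/2}M''(h)=\sum_{k=0}^{n-1}\widetilde\alpha_{k-n+1}h^k+\frac{1}{h^{1/2}(1-h)^{3/2}}\sum_{k=0}^{n+1}\widetilde\beta_{k-n}h^k.
\]
Now differentiate $n$ times: the polynomial part of degree $n-1$ dies, and Lemma~\ref{lem:P} with $p=1/2$, $q=3/2$ (so $p+q=2$, and $n+1-(p+q)=n-1$\ldots\ actually one applies it iteratively or checks the exponent bookkeeping directly) gives
\[
F^{(n)}(h)=\frac{1}{h^{n+1/2}(1-h)^{n+3/2}}\sum_{k=0}^{n+1}\bar\beta_k h^k,
\]
which has at most $n+1$ zeros. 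Rolle then yields $\#\{F=0\}\le 2n+1$, hence $\#\{M''=0\}\le 2n+1$, and finally $\#\{M=0\}\le 2n+2$ in $(0,1)$ after accounting for $M(1)=0$. The point you missed is that Lemma~\ref{lem:P} is the \emph{replacement} for squaring, not a supplement to it: separate the two branches so one becomes polynomial, then differentiate it away.
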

\begin{proof}
It is easy to verify that $M(h)$ has at most 1 zero in $(0,1)$ for $n=0$, and at most 2 zeros
in $(0,1)$ for $n=1$, which show that $H_2(n)\leq n+1$ for $n=0,1$. Since the functions $1-h, \sqrt{1-h}$
and $h\sqrt{1-h}$ are linearly independent, by Lemma \ref{le:CGP} and Proposition \ref{PS21}, $H_2(n)\geq n+1$ for
$n=0,1$. The first conclusion of the proposition holds.

For $n\geq2$, we get the second derivative of $M(h)$ first.
\[
M''(h)=\frac{1}{\sqrt{h}}\sum_{k=1-n}^{0}\widetilde\alpha_{k}h^{k}
+\frac{1}{(1-h)^{3/2}}\sum_{k=-n}^{1}\widetilde\beta_{k}h^{k},
\]
where $\widetilde\alpha_{k}, \widetilde\beta_{k}$ are liner combinations of $\alpha_{k}, \beta_{k}$, and are independent.

Let $F(h)=h^{n-\frac{1}{2}}M''(h)$, then
\[
F(h)=\sum_{k=0}^{n-1}\widetilde\alpha_{k-n+1}h^{k}
+\frac{1}{h^{1/2}(1-h)^{3/2}}\sum_{k=0}^{n+1}\widetilde\beta_{k-n}h^{k},
\]
which has the same zeros as $M''(h)$ in $(0,1)$. By Lemma \ref{lem:P},
\[
F^{(n)}(h)=\frac{1}{h^{n+1/2}(1-h)^{n+3/2}}\sum_{k=0}^{n+1}\overline\beta_{k}h^{k}.
\]
Obviously, $F^{(n)}(h)$ has at most $n+1$ zeros in $(0,1)$. Thus, $F(h)$, as well as $M''(h)$, has at most $2n+1$ zeros in $(0,1)$. It follows that $M(h)$ has at
most $2n+3$ zeros. Note that $M(1)=0$, thus $M(h)$ has at
most $2n+2$ zeros in $(0,1)$. That is $H_2(n)\leq 2n+2$ for $n\geq2$.

On the other hand,  $\sum_{k=2-n}^3\alpha_k=0$ shows that
$$\alpha_0=-\sum_{k=2-n}^{-1}\alpha_k-\sum_{k=1}^3\alpha_k,$$
 and $M(h)$ can be rewritten as
\begin{equation}\label{Mr}
M(h)=\sum_{k=1}^3\alpha_{k}(h^{k/2}-1)+\sum_{k=2-n}^{-1}\alpha_{k}(h^{k+1/2}-1)+\sqrt{1-h}\sum_{k=2-n}^1\beta_k h^k+J_{-1}(h)(\gamma_0+\gamma_1h).
\end{equation}
Since $h^{k/2}-1, k=1,2,3$, $h^{k+1/2}-1, k=2-n,\cdots,-1$, $h^k\sqrt{1-h}, k=2-n,\cdots,1$ and $h^kJ_{-1}(h), k=0,1$ are linearly independent,
it follows from Lemma \ref{le:CGP} and Proposition \ref{PS21} that $H_2(n)\geq2n+2$ for $n\geq2$. The second part of the proposition
follows. Specifically, $H_2(2)=6$.
\end{proof}
\section{Zeros of $M(h)$ for system $S_3$}\label{sec:S3}
Consider the piecewise smooth polynomial perturbations of degree $n$ of system $S_3$:
\begin{equation}
\left(\begin{array}{ll}\dot{x}\\[2ex] \dot{y}\end{array}\right)=\label{S3}\left\{\begin{array}{ll}
\left(\begin{array}{ll}-y+\frac{1}{4}x^{2}+\varepsilon P^+(x,y)\\
x(1+y)+\varepsilon Q^+(x,y)\end{array}\right), & \mbox{ $x> 0$,}
\\[2ex]
\left(\begin{array}{ll}-y+\frac{1}{4}x^{2}+\varepsilon P^-(x,y)\\
x(1+y)+\varepsilon Q^-(x,y)\end{array}\right), & \mbox{ $x< 0$,}
\end{array} \right.
\end{equation}
where $P^{\pm}(x,y)$ and $Q^{\pm}(x,y)$ are given by \eqref{PQ}. For ${\varepsilon=0}$, the first integral of system \eqref{S3} is
\[
H=\dfrac{(x^2+4y+8)^2}{1+y},
\]
and the integrating factor is $R=\dfrac{4(x^2+4y+8)}{(1+y)^2}$.
\subsection{Expression of $M(h)$ and independence of the coefficients}
Let $x=2\bar x$ and $y=\bar y(2+\bar y)$, system \eqref{S3} is transformed into the following perturbed system $S_1$
(we omit ``\ $\bar{}$\ '' below for convenience):
\begin{equation}
\left(\begin{array}{ll}\dot{x}\\[2ex] \dot{y}\end{array}\right)=\label{S31}\left\{\begin{array}{ll}
\left(\begin{array}{ll}-y+\frac{1}{2}x^{2}-\frac{1}{2}y^{2}+\frac{\varepsilon}{2} P^+\left(2x,y(2+y)\right)\\
x(1+y)+\frac{\varepsilon}{2(1+y)} Q^+\left(2x,y(2+y)\right)\end{array}\right), & \mbox{ $x> 0$,}
\\[2ex]
\left(\begin{array}{ll}-y+\frac{1}{2}x^{2}-\frac{1}{2}y^{2}+\frac{\varepsilon}{2} P^-\left(2x,y(2+y)\right)\\
x(1+y)+\frac{\varepsilon}{2(1+y)} Q^-\left(2x,y(2+y)\right)\end{array}\right), & \mbox{ $x< 0$.}
\end{array} \right.
\end{equation}
Thus, the first order Melnikov function of system \eqref{S31}
is
\begin{equation*}\label{M}
\begin{split}
M(h)=&
\iint_{D_h^+}\left[\frac{\partial}{\partial x}\left(\frac{P^+}{(1+y)^2}\right)+\frac{\partial}{\partial y}\left(\frac{Q^+}{(1+y)^3}\right)\right]\mathrm{d}x\ \mathrm{d}y+\int_{\beta(h)}^{\alpha(h)}\frac{P^+|_{x=0}}{(1+y)^2}\mathrm{d}y\\
&+\iint_{D_h^-}\left[\frac{\partial}{\partial x}\left(\frac{P^-}{(1+y)^2}\right)+\frac{\partial}{\partial y}\left(\frac{Q^-}{(1+y)^3}\right)\right]\mathrm{d}x\ \mathrm{d}y+\int_{\beta(h)}^{\alpha(h)}\frac{P^-|_{x=0}}{(1+y)^2}\mathrm{d}y,\quad h>0,\\
\end{split}
\end{equation*}
where $P^{\pm}$ and $Q^{\pm}$ can be rewritten as
\begin{equation*}\begin{split}
P^{\pm}=\sum_{i+j=0}^{n}c_{ij}^{\pm}x^i(1+y)^{2j},\quad Q^{\pm}=\sum_{i+j=0}^{n}d_{ij}^{\pm}x^i(1+y)^{2j},
\end{split}\end{equation*}
with $c_{ij}^{\pm}$ and $d_{ij}^{\pm}$ being linear combinations of $a_{ij}^{\pm}$ and $b_{ij}^{\pm}$, respectively.

By the results obtained in section \ref{sec:S1}, we have the following statements.
\begin{proposition}\label{PS31}The first order Melnikov function $M(h)$ for system $S_3$ is:
\begin{equation*}\label{M3}
\begin{split}
M(h)=&\alpha_0h(4+h)(2+h)+\beta_0\sqrt{h(4+h)},\quad\ n=0,\\
M(h)=&h(\alpha_{0}+\alpha_{1}(4+h)(2+h))+\sqrt{h(4+h)}(\beta_0+\beta_1(2+h)^2),\quad\ n=1,\\
M(h)=&h(\alpha_{0}+\alpha_{1}h+\alpha_{2}h^2)+\sqrt{h(4+h)}(\beta_0+\beta_1(2+h)^2)+\gamma_0(2+h)\ln(1+\alpha(h)),\ n=2,\\
M(h)
=&h(\alpha_{-2}+\alpha_{-1}h+\alpha_{0}h^2)+h(4+h)\sum_{i=1}^{n-2}\alpha_i(2+h)^{2i}\\
&+\sqrt{h(4+h)}\sum_{i=0}^{n-1}\beta_i(2+h)^{2i}+\gamma_0(2+h)\ln(1+\alpha(h)),\quad\ n=3,4,\\
M(h)
=&h(\alpha_{-2}+\alpha_{-1}h+\alpha_{0}h^2)+h(4+h)\sum_{i=1}^{n-2}\alpha_i(2+h)^{2i}\\
&+\sqrt{h(4+h)}\sum_{i=0}^{n-1}\beta_i(2+h)^{2i}+(2+h)(\gamma_0+\gamma_1(2+h)^2)\ln(1+\alpha(h)), \quad\ n\geq5,
\end{split}
\end{equation*}
where $\alpha(h)=(h+\sqrt{h(4+h)})/2$. Moreover, all the coefficients of $M(h)$ are independent.
\end{proposition}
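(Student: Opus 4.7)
The plan is to reduce the problem to the $S_1$ case already analyzed in Section \ref{sec:S1} via the change of variables $x = 2\bar x$, $y = \bar y(2+\bar y)$ indicated just before the statement, which sends $S_3$ to the $S_1$-type perturbed system \eqref{S31}. The driving observation is the identity $1+y = (1+\bar y)^2$: after substitution the new perturbation $P^\pm(2\bar x,\bar y(2+\bar y))$ expands as $\sum_{i+j\le n} c_{ij}^\pm \bar x^i (1+\bar y)^{2j}$, while $Q^\pm(2\bar x,\bar y(2+\bar y))/(1+\bar y)$ expands as $\sum_{i+j\le n} d_{ij}^\pm \bar x^i (1+\bar y)^{2j-1}$. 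Thus the two perturbation pieces bring in only even (respectively, odd) powers of $(1+\bar y)$, and this parity structure is what will ultimately force the specific combinations $(2+h)^{2i}$ and $(2+h),(2+h)^3$ appearing in the claimed expression.

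Next I would apply the Green's theorem formula \eqref{Mh} to system \eqref{S31} in exactly the same way as in Subsection \ref{sub:S11}, obtaining $M(h) = \sum_k \bar m_k(h)\,I_{k-3}(h) + \sum_k \tilde m_k(h)\,J_{k-3}(h)$ with $I_k,J_k$ as defined there and evaluated in \eqref{Ik1}--\eqref{J1}. Since the two $\bar y$-intercepts satisfy $u_1 + u_2 = 2+h$ and $u_1u_2 = 1$, symmetric polynomials in $1+\bar y$ integrate to polynomials in $2+h$; the even-power structure of the transformed perturbation then forces these symmetric combinations to occur only as $(2+h)^{2i}$ in the polynomial and square-root pieces, and only as $(2+h)$ or $(2+h)^3$ multiplying the logarithm $J_{-1}(h) = 2\ln(1+\alpha(h))$.

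The bookkeeping then splits into the cases $n=0,1,2,3,4$ and $n\ge 5$, paralleling the proof of Proposition \ref{PS11}: for $n\ge 5$ one checks the degree inequalities $\deg \bar m_k + \deg(I_{k-3}/h) \le n-2$ and their $\tilde m_k J_{k-3}$ analogues, so that the contributions of large $k$ collapse into the displayed summations. For the independence claim I would mimic Proposition \ref{PS12}: select a set of $2n+3$ parameters among the $c_{ij}^+, d_{ij}^+$ (equivalently among the $a_{ij}^+, b_{ij}^+$) whose cardinality matches the coefficient count $3 + (n-2) + n + 2$, and compute the Jacobian of $(\alpha_\ast, \beta_\ast, \gamma_\ast)$ against them; using a Lemma \ref{lem:S1}-type table of basic contributions, this Jacobian can be arranged to be triangular with nonzero diagonal entries (products of elementary factors of $\pi$ and the nonzero constants $c_k, d_k$), hence nonzero.

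The main obstacle is the second step: tracking which index pairs $(i,j)$ feed into each $\bar m_k(h)$ and $\tilde m_k(h)$ and verifying that the logarithmic coefficient collapses exactly to $(2+h)(\gamma_0 + \gamma_1(2+h)^2)$ for $n\ge 5$, with only the $(2+h)$ piece surviving for $n\le 4$. This is not automatic from the generic $S_1$ bound on $\tilde m_k$; it needs the parity of the $(1+\bar y)$-exponents to cancel the would-be $(2+h)^{2i+1}\ln(1+\alpha(h))$ contributions for $i\ge 2$. Once this refinement is in place, matching the remaining $(2+h)^{2i}$ structure in the polynomial and square-root parts, and carrying out the Jacobian computation for independence, are both routine.
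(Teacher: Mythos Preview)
Your proposal is correct and follows essentially the same route as the paper. The paper's proof is extremely terse---it simply says ``By the results obtained in section \ref{sec:S1}, we have the following statements''---so in fact you have supplied considerably more detail than the paper does: the change of variables $x=2\bar x$, $y=\bar y(2+\bar y)$ reducing to the $S_1$ setting, the identity $1+y=(1+\bar y)^2$ forcing only even (resp.\ odd) powers of $(1+\bar y)$ in the $P$ (resp.\ $Q/(1+\bar y)$) pieces, and the Jacobian argument for independence mimicking Proposition~\ref{PS12}. The ``obstacle'' you flag---that the logarithmic coefficient collapses to $(2+h)(\gamma_0+\gamma_1(2+h)^2)$ and no higher---is exactly the computation the paper leaves implicit; it follows because the basic log contributions in Lemma~\ref{lem:S1} are always at most quadratic in $h$ regardless of the monomial degree, and the parity in $(1+\bar y)$ then selects only the odd powers of $2+h$.
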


\subsection{Zeros of $M(h)$}
Consider the first order Melnikov function $M(h)$ obtained in Proposition \ref{PS31}. Then
\begin{proposition}
$H_3(0)=1$, $H_3(n)=2n+1$ for $n=1,2$, and $H_3(n)=2n+2$ for $n\geq3$.
\end{proposition}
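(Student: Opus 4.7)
The plan is to mirror the three-step strategy used in Sections~\ref{sec:S1} and \ref{sec:S2}. First I would eliminate the logarithmic term $\ln(1+\alpha(h))$ in the formula from Proposition~\ref{PS31} by differentiating $M(h)$ a suitable number of times. Since $(\ln(1+\alpha(h)))'=1/\sqrt{h(4+h)}$ and the polynomial coefficient of the log is $\gamma_0(2+h)$ when $n=2,3,4$ (degree $1$) and $(2+h)(\gamma_0+\gamma_1(2+h)^2)$ when $n\ge 5$ (degree $3$), two or four derivatives respectively kill the log. After the log is gone, $M^{(k)}(h)$ decomposes as a polynomial in $h$ plus a rational function of the form $R(h)/(h(4+h))^{m+1/2}$. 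Invoking Lemma~\ref{lem:P} and continuing to differentiate until the polynomial part vanishes, I would reduce $M^{(k+j)}(h)$ to a single quotient $\widetilde R(h)/(h^{N+1/2}(4+h)^{N+1/2})$ with explicit numerator degree. Rolle's theorem together with the fixed zero $M(0)=0$ then yields $H_3(2)\le 5$ and $H_3(n)\le 2n+2$ for $n\ge 3$.

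The special cases $n=0$ and $n=1$ are handled directly. For $n=0$ the factorization $M(h)=\sqrt{h(4+h)}\,[\alpha_0(2+h)\sqrt{h(4+h)}+\beta_0]$ together with strict monotonicity of $(2+h)\sqrt{h(4+h)}$ on $(0,\infty)$ gives $H_3(0)\le 1$. For $n=1$, since no log is present, I would instead verify via Lemma~\ref{ECT3} that the four basis functions $\{h,\ h(4+h)(2+h),\ \sqrt{h(4+h)},\ (2+h)^2\sqrt{h(4+h)}\}$ form an ECT-system on $(0,\infty)$, giving $H_3(1)\le 3$.

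For the lower bounds, the $2n+3$ basis functions for $M(h)$ in Proposition~\ref{PS31} (in the case $n\ge 5$) are linearly independent, so Lemma~\ref{le:CGP} yields at least $2n+2$ simple zeros. The cases $n=0,1,2$ involve $2$, $4$, $6$ basis functions respectively, and Lemma~\ref{le:CGP} gives the required $1, 3, 5$ zeros. When $n=3$ or $n=4$ only $2n+2$ basis functions are available (one fewer $\gamma$-coefficient than when $n\ge 5$), so Lemma~\ref{le:CGP} alone delivers only $2n+1$ zeros; to produce the extra zero required for $H_3(n)=2n+2$, I would verify the Wronskian hypothesis of Lemma~\ref{le:NT2} and invoke it on the relevant ordered set of basis functions.

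The main obstacle will be the degree bookkeeping in the upper bound for $n\ge 3$: the polynomial part of $M(h)$ in Proposition~\ref{PS31} has degree $2n-2$, which is considerably higher than the corresponding polynomial in the $S_1$ analysis, so a naive Rolle count after eliminating the logarithm produces a bound substantially exceeding $2n+2$. The key will be to exploit the specific factored forms $h(\alpha_{-2}+\alpha_{-1}h+\alpha_0 h^2)+h(4+h)\sum_{i=1}^{n-2}\alpha_i(2+h)^{2i}$ and the parallel structure $\sum_{i=0}^{n-1}\beta_i(2+h)^{2i}$ of the coefficient of $\sqrt{h(4+h)}$ -- both inherited from the pullback $x=2\bar x,\ y=\bar y(2+\bar y)$ used to derive Proposition~\ref{PS31} -- in order to produce the cancellations needed for Lemma~\ref{lem:P} to output a numerator of exactly the right degree. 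A secondary hurdle is the explicit Wronskian computation required when invoking Lemma~\ref{le:NT2} in the marginal cases $n=3, 4$.
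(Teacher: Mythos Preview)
Your overall strategy mirrors the paper's: Rolle after differentiating away the logarithm for the upper bound, ECT for $n=1$, Lemma~\ref{le:CGP} for most lower bounds, and the Novaes--Torregrosa lemmas for the tight cases $n=3,4$. You have also correctly diagnosed that the naive Rolle count overshoots $2n+2$ and that the factored $(2+h)^{2i}$ structure inherited from the change of variables must be exploited.

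What you are missing is precisely \emph{how} to exploit it. The paper's device is the substitution $z=(2+h)^2$, under which $h(4+h)=z-4$. The polynomial coefficients $\sum\alpha_i(2+h)^{2i}$ and $\sum\beta_i(2+h)^{2i}$ are then polynomials in $z$ of degree $n-2$ and $n-1$, and the residual cubic $h(\alpha_{-2}+\alpha_{-1}h+\alpha_0h^2)$ --- the only piece not expressible in $z$ --- is killed by taking four $h$-derivatives (not two). A short computation shows that $M^{(4)}(h)$ is a genuine function of $z$ alone,
\[
M^{(4)}=\sum_{k=0}^{n-3}\widetilde\alpha_kz^k+(z-4)^{-7/2}\sum_{k=0}^{n+1}\widetilde\beta_kz^k,
\]
so one can now differentiate $n-2$ times \emph{in $z$}. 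This halves the apparent degrees and yields a numerator of degree $n+1$ rather than roughly $2n$; Rolle then gives at most $2n-1$ zeros for $M^{(4)}$ and hence $H_3(n)\le 2n+2$. Without passing to $z$, Lemma~\ref{lem:P} applied in the $h$ variable produces a numerator of degree about $2n$, and your bound comes out near $4n$. Note also that after the $z$-substitution the denominator is the single factor $(z-4)^{7/2}$, so Lemma~\ref{lem:P} is not even needed --- direct differentiation suffices.

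Two smaller points. First, for $n=2,3,4$ you propose two derivatives; the paper takes four uniformly, precisely to annihilate the stray cubic-in-$h$ piece and land in the $z$-world. Second, for $n=3$ the paper uses Lemma~\ref{le:NT} (the last Wronskian $W_8$ has a single simple zero and all earlier ones are nonvanishing), reserving Lemma~\ref{le:NT2} for $n=4$ where $W_{10}$ has two simple zeros; your plan to use Lemma~\ref{le:NT2} for both would still establish the required lower bound, but the Wronskian analysis differs in the two cases.
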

\begin{proof}
For $n=0$, $M(h)$ has at most 1 zero in $(0,+\infty)$ by a direct computation. Notice that the two generating functions are linearly independent, by Lemma \ref{le:CGP} and Proposition \ref{PS31},
$H_3(0)\geq1$, thus $H_3(0)=1$.

For $n=1$, let
\begin{equation*}\begin{split}
&f_1=h, \quad f_2=h(4+h)(2+h),\quad f_3=\sqrt{h(4+h)}, \quad f_4=\sqrt{h(4+h)}(2+h)^2.
\end{split}\end{equation*}
Then
\begin{equation*}\begin{split}
W_1&=h>0,\\
W_2&=2h^2(3+h)>0,\\
W_3&=\frac{4(18+16h+3h^2)\sqrt{h(4+h)}}{(4+h)^2}>0,\\
W_4&=-\frac{192(6+h)}{h(4+h)^3}<0,
\end{split}\end{equation*}
hence $M(h)$ has at most 3 zeros in $(0,+\infty)$ by \emph{Chebyshev} criterion.

For $n=2$, it follows from
\begin{equation}\label{S32}
M^{(4)}(h)=-\frac{2\left(8(3\beta_0+48\beta_1+4\gamma_0)+2(12\beta_0+12\beta_1+13\gamma0)(2+h)^2-\gamma_0 (2+h)^4\right)}{(h(4+h))^{7/2}}
\end{equation}
that $M^{(4)}(h)$ has at most $2$ zeros in $(0,+\infty)$. Thus by Rolle's Theorem,
$M(h)$ has at most $6$ zeros. Note that $M(0)=0$, $M(h)$ has at most $5$ zeros in $(0,+\infty)$.
Since the generating functions of $M(h)$ are linearly independent, $H_3(n)=2n+1$ for $n=1,2$ follows from Lemma \ref{le:CGP} and Proposition \ref{PS31}.

For $n\geq3$,
\[
M^{(4)}(h)=\sum_{k=0}^{n-3}\widetilde{\alpha}_k(2+h)^{2k}+\frac{1}{(h(4+h))^{7/2}}\sum_{k=0}^{n+1}\widetilde{\beta}_k(2+h)^{2k}.
\]
Let $F(h)=M^{(4)}(h)$ and $z=(2+h)^2$, then we have
\[
F(z)=\sum_{k=0}^{n-3}\widetilde{\alpha}_k z^{k}+\frac{1}{(z-4)^{7/2}}\sum_{k=0}^{n+1}\widetilde{\beta}_kz^{k}, \quad z>4.
\]
It is easy to obtain that
\[
F^{(n-2)}(z)=\frac{1}{(z-4)^{n+3/2}}\sum_{k=0}^{n+1}\bar{\beta}_kz^{k}, \quad z>4.
\]
Hence, by Rolle's Theorem $F(z)$ have at most $2n-1$ zeros in $(4,+\infty)$, which implies that
$M^{(4)}(h)$ has at most $2n-1$ zeros in $(0,+\infty)$. Thus
$M(h)$ has at most $2n+3$ zeros. It follows from $M(0)=0$ that $M(h)$ has at most $2n+2$ zeros in $(0,+\infty)$.

We remark that the upper bound is sharp when $n\geq5$ by Lemma \ref{le:CGP} and Proposition \ref{PS31}, i.e., $H_3(n)=2n+2$ for $n\geq5$. In what follows, we show that the upper
bound also can be reached when $n=3,4$. Here \emph{Chebyshev} criterion does not work since the
last Wronskian determinant has zeros. Lemma \ref{le:NT} is needed for $n=3$, in which case the
last Wronskian determinant has a simple zero. However, the
last Wronskian determinant has two simple zeros for $n=4$, we apply Lemma \ref{le:NT2} to show
the upper bound can be achieved.

For $n=3$, let
\begin{equation*}\begin{split}
&f_i=h^{i}, \ i=1,2,3,\\
&f_4=h(4+h)(2+h)^2,\\
&f_i=\sqrt{h(4+h)}(2+h)^{2(i-5)},\ i=5,6,7,\\
&f_8=(2+h)\ln(1+\alpha(h)),
\end{split}\end{equation*}
then
\begin{equation*}\begin{split}
W_1=&h>0,\\
W_2=&h^2>0,\\
W_3=&2h^3>0,\\
W_4=&12h^4>0,\\
W_5=&\frac{576(35+30h+9h^2+h^3)\sqrt{h(4+h)}}{(4+h)^4}>0,\\
W_6=&-\frac{138240(2+h)(70 + 56 h + 14 h^2 + h^3)}{h^3(4+h)^7}<0,\\
W_7=&-\frac{99532800(2+h)^3}
{h^6(4+h)^{10}\sqrt{h(4+h)}}Y_7<0,\\
W_8=&\frac{4777574400 (2 + h)^6((1 - 2 h)^2 + 11 h^2 + 8 h^3 + h^4)}
{h^{13}(4+h)^{13}\sqrt{h(4+h)}}Y_8,
\end{split}\end{equation*}
where
\begin{equation*}\begin{split}
Y_7=&112(1-h)^2+560h^2+1192h^3+676h^4+178h^5+22h^6+h^7>0,\\
Y_8=&1680\ln(1+\alpha(h))\\
&-h\frac{1057 + 30452 h^2 + 47 (20 h - 7)^2+ 31164 h^3 + 5334 h^4 +
 336 h^5 + 88 h^6 + 18 h^7 + h^8}{\sqrt{h(4+h)}((1 - 2 h)^2 + 11 h^2 + 8 h^3 + h^4)}.
\end{split}\end{equation*}
Since
\[
Y'_8=-\frac{2h^3(2 + h)(-1 + 8 h + 2 h^2)Y_7}{(4 + h)(1 - 4 h + 15 h^2 + 8 h^3 + h^4)^2\sqrt{h(4+h)}},
\]
which has a zero at $h^*=(3\sqrt{2}-4)/2$, we obtain that $Y_8$ increases when $h\in(0,h^*)$ and
decreases when $h\in(h^*,+\infty)$. Note that
$\lim_{h\rightarrow0^{+}}Y_8=0$ and $\lim_{h\rightarrow +\infty}Y_8=-\infty$.
Thus, $Y_8$ has a simple zero in $h\in(0,+\infty)$, equivalently, $W_8$ has a simple
zero in $h\in(0,+\infty)$. It follows from  Lemma \ref{le:NT} that $M(h)$ can have $8$ zeros in $h\in(0,+\infty)$.

For $n=4$, let
\begin{equation*}\begin{split}
f_i&=h^{i},\ i=1,2,3 \\
f_i&=h(4+h)(2+h)^{2(i-3)},\ i=4,5,\\
f_i&=\sqrt{h(4+h)}(2+h)^{2(i-6)},\ i=6,7,8,9,\\
f_{10}&=(2+h)\ln(1+\alpha(h)),
\end{split}\end{equation*}
then all of $W_i, i=1,2,3,4$ are the same than in $n=3$ and are positive for $h>0$, and
\begin{equation*}\begin{split}
W_5=&288 h^5 (12 + 5 h)>0,\\
W_6=&-\frac{69120 (2 + h) (756 + 847 h + 364 h^2 + 73 h^3 + 6 h^4)\sqrt{h(4+h)}}{(4+h)^5}<0,\\
W_7=&-\frac{696729600 (2 + h)^3 (3 + h) (84 + 63 h + 15 h^2 + h^3)}{h^4 (4 + h)^9}<0,\\
W_8=&\frac{501645312000(2 + h)^6}
{h^8(4+h)^{13}\sqrt{h(4+h)}}\big(1038 + 588 (h - 1)^2 + 3384 h^3 + 390 (h^2 - 1)^2 + 2794 h^4\\& + 1268 h^5
   + 258 h^6 + 26 h^7 + h^8\big)>0,\\
W_9=&\frac{20226338979840000 (2 + h)^{10} }
{h^{13} (4 + h)^{18}}Y_9>0,\\
W_{10}=&\frac{5825185626193920000 (2 + h)^{15}Z_1}
{h^{22} (4 + h)^{22}}
\left(-5040 \ln(1+\alpha(h))
+\frac{hZ_2}{\sqrt{h(4+h)}Z_1}\right),
\end{split}\end{equation*}
where
\begin{equation*}\begin{split}
Y_9=&700 + 25424 h^2 + 257 (12 h - 2)^2 + 47228 h^3 + 15520 h^2 (h^2 - 1)^2+27588 h^6 \\
& + 1348 h^3 (h^2 - 1)^2 + 36222 h^7+ 15253 h^8 + 3520 h^9 + 472 h^{10} +
 34 h^{11} + h^{12}>0,\\
Z_1=&15 - 160 h + 1304 h^2 - 1248 h^3 - 76 h^4 + 920 h^5 + 450 h^6 +
 80 h^7 + 5 h^8>0,\\
Z_2=&151200 - 1600200 h + 13008660 h^2 - 11470860 h^3 - 1925118 h^4 +
 9318900 h^5 + 5293110 h^6\\
 & + 1120770 h^7 + 102665 h^8 + 6340 h^9 +
 1110 h^{10} + 130 h^{11} + 5 h^{12}>0,
\end{split}\end{equation*}
by Sturm Theorem. Similarly, denote the function in the parenthesis of $W_{10}$ by
$Y_{10}$, then
\[
Y'_{10}=\frac{20h^4(2 + h) (3 - 60 h + 65 h^2 + 40 h^3 + 5 h^4)Y_9 }{(4 + h)Z_1^2\sqrt{h(4+h)}}
\]
has two simple zeros $h_1^*$ and $h_2^*$ in $h\in(0,+\infty)$ with $h_1^*<h_2^*$. Therefore $Y_{10}$ increases when $h\in(0,h_1^*)\bigcup(h_2^*,+\infty)$ and
decreases when $h\in(h_1^*,h_2^*)$. Notice that
$\lim_{h\rightarrow0^{+}}Y_{10}=0$, $\lim_{h\rightarrow 1/2}Y_{10}\approx-0.58<0$, and $\lim_{h\rightarrow +\infty}Y_{10}=+\infty$. Hence $Y_{10}$ has two simple zeros
$h_1$ and $h_2$. Obviously,
the ordered set $(f_1,f_2,\cdots,f_{10})$ satisfies $W_9(h_1)\neq0$, $W_{10}(h_1)=0$ and $W'_{10}(h_1)\neq0$. It follows from Lemma \ref{le:NT2} that
$M(h)$ can have $10$ zeros in $(0,+\infty)$.
\end{proof}
\begin{remark}

If $a_{00}^{\pm}=b_{00}^{\pm}=0$, then $\beta_0=-4\beta_1-\gamma_0$ (see Appendix A.2 for the specific expressions of $\beta_0, \beta_1$ and  $\gamma_0$), and
from \eqref{S32},
\[
M^{(4)}(h)=\frac{2(72\beta_1+2\gamma_0+\gamma_0(2+h)^2)}{(h(4+h))^{5/2}}.
\]
Thus, $M(h)$ has at most $4$ zeros in $(0,+\infty)$. This result is consistent with that in \cite{LC}.
In fact, by the change
\[
h=2\left(-1+\frac{1}{\sqrt{1-r^2}}\right),
\]
$M(h)$ can be translated into
\[
M(r)=-\frac{F(r)}{(1-r^2)^{3/2}},
\]
where $F(r)$ is the averaged function in \cite{LC}, and we omit the difference in the coefficients. This shows that
the first order Melnikov function and the first order Averaged function may be equivalent in investigating the number of limit cycles of piecewise smooth polynomial differential systems.
\end{remark}

\section{Zeros of $M(h)$ for system $S_4$}\label{sec:S4}
Consider the piecewise smooth polynomial perturbations of degree $n$ of system $S_4$:
\begin{equation}
\left(\begin{array}{ll}\dot{x}\\[2ex] \dot{y}\end{array}\right)=\label{S4}\left\{\begin{array}{ll}
\left(\begin{array}{ll}-y+2x^{2}-\frac{1}{2}y^{2}+\frac{\varepsilon }{8}P^+(x,y)\\
x(1+y)+\frac{\varepsilon }{8}Q^+(x,y)\end{array}\right), & \mbox{ $x> 0$,}
\\[2ex]
\left(\begin{array}{ll}-y+2x^{2}-\frac{1}{2}y^{2}+\frac{\varepsilon }{8}P^-(x,y)\\
x(1+y)+\frac{\varepsilon }{8}Q^-(x,y)\end{array}\right), & \mbox{ $x< 0$,}
\end{array} \right.
\end{equation}
where $P^{\pm}(x,y)$ and $Q^{\pm}(x,y)$ are given by \eqref{PQ}. For ${\varepsilon=0}$, the first integral of system \eqref{S4} is
\[
H=\frac{4x^2-2(y+1)^2+1}{(1+y)^4},
\]
and the integrating factor is $R=\frac{8}{(1+y)^5}$. Here $L_h^+=\{x\geq0|H=h,-1<h<0\}$ and $L_h^-=\{x\leq0|H=h,-1<h<0\}$ are the right part and the left part of the periodic orbits surrounding the origin. $\mathcal{A}_h=(0,\alpha(h))$ and $\mathcal{B}_h=(0,\beta(h))$, where
\begin{equation}
\alpha(h)=-1+\sqrt{\frac{1+\sqrt{1+h}}{-h}}, \quad \beta(h)=-1+\sqrt{\frac{1-\sqrt{1+h}}{-h}}.
\end{equation}
\subsection{Expression of $M(h)$}
By \eqref{Mh}, the first order Melnikov function of system \eqref{S4}
is
\begin{equation}\label{M4}
\begin{split}
M(h)=&\iint_{D_h^+}\left[\frac{\partial}{\partial x}\left(\frac{P^+}{(1+y)^5}\right)+\frac{\partial}{\partial y}\left(\frac{Q^+}{(1+y)^5}\right)\right]\mathrm{d}x\ \mathrm{d}y+\int_{\beta(h)}^{\alpha(h)}\frac{P^+(0,y)}{(1+y)^5}\mathrm{d}y\\
&+\iint_{D_h^-}\left[\frac{\partial}{\partial x}\left(\frac{P^-}{(1+y)^5}\right)+\frac{\partial}{\partial y}\left(\frac{Q^-}{(1+y)^5}\right)\right]\mathrm{d}x\ \mathrm{d}y-\int_{\beta(h)}^{\alpha(h)}\frac{P^-(0,y)}{(1+y)^5}\mathrm{d}y\\
=&\sum_{k=0}^{n+2[\frac{n}{2}]}\overline{m}_k(h)I_{k-6}(h)+
\sum_{k=0}^{n+3+2[\frac{n-1}{2}]}\widetilde{m}_k(h)J_{k-6}(h),
\end{split}
\end{equation}
where
\[
I_k(h)=\int_{\beta(h)}^{\alpha(h)}(1+y)^{k}\sqrt{-1+2(1+y)^2+h(1+y)^4}\mathrm{d}y,\quad
J_k(h)=\int_{\beta(h)}^{\alpha(h)}(1+y)^{k}\mathrm{d}y,
\]
and $\overline{m}_{k}$, $\widetilde{m}_{k}$ are polynomials of $h$ with degree
\begin{equation}\begin{split}\label{mk4}
&\deg\overline{m}_k\leq[\frac{k}{4}],\quad \ \deg\widetilde{m}_k\leq[\frac{k}{4}],
\end{split}\end{equation}
which are determined by Newton's formula and some qualitative analysis, see \cite{LLLZ} for details.
It is worth noting that  $\widetilde{m}_0(h)=0$ for $n=0$, $\overline{m}_{2n-1}=0$ for $n$ even, and
$\widetilde{m}_{2n+1}=0$ for $n$ odd,  and
\begin{equation}\label{mkn}
\begin{split}
\overline{m}_k(h)&=\left\{\begin{array}{ll}
\displaystyle\sum_{i=0}^{[k/4]}c_ih^i & \mbox{for $k\leq n$,} \\
\displaystyle\sum_{i=[(k+1-n)/2]}^{[k/4]}c_ih^i & \mbox{for $k\geq n+1$,}
\end{array} \right.\\
\widetilde{m}_k(h)&=\left\{\begin{array}{ll}
\displaystyle\sum_{i=0}^{[k/4]}c_ih^i \quad & \mbox{for $k\leq n+1$,} \\
\displaystyle\sum_{i=[(k-n)/2]}^{[k/4]}c_ih^i \quad & \mbox{for $k\geq n+2$.}
\end{array} \right.
\end{split}
\end{equation}
Formula \eqref{mkn} is essential to obtain a more precise expression of $M(h)$.

Using the results in \cite{LLLZ},  the expressions of $I_k(h)$ are as follows.

For $k$ odd, i.e. $k=2m+1$,
\begin{equation}\begin{split}\label{Ik4}
I_{2m+1}(h)
=4(-h)^{-\frac{3}{2}}(1+h)\sum_{i=0}^{[m/2]}C_{i,m}(-h)^{i-m}, \quad  m\geq0,
\end{split}\end{equation}
and
\begin{equation}\begin{split}\label{I4}
&I_{-5}(h)=\frac{(1+h)\pi}{4},\\
&I_{-3}(h)=\frac{1}{2}(1-\sqrt{-h})\pi,\\
&I_{-1}(h)=\frac{1}{2}((-h)^{-\frac{1}{2}}-1)\pi.
\end{split}\end{equation}
For $k$ even, i.e. $k=2m$,
\begin{equation}\begin{split}\label{Ik41}
I_{2m}(h)=&(-h)^{-m-1}(f_m \bar I_{2}+g_m \bar I_0), \quad m\geq1,
\end{split}\end{equation}
and
\begin{equation}\begin{split}\label{I41}
&I_{-6}(h)=\bar I_2,\\[1ex]
&I_{-4}(h)=\bar I_0,\\
&I_{-2}(h)=\dfrac{4\bar I_0-5\bar I_2}{h},\\
&I_{0}(h)=-\dfrac{\bar I_0}{h},
\end{split}\end{equation}
where $f_m$ and $g_m$ are polynomials with respect to $h$ with $\deg f_m=[(m-1)/2]$ and
$\deg g_m=[m/2]$, and
\begin{equation}\label{barIk}
\bar I_k=\int_{u_1}^{u_2}u^k\sqrt{h+2u^2-u^4}\mathrm{d}u.
\end{equation}
Here $u_1=\sqrt{-h}(1+\beta(h))$ and $u_2=\sqrt{-h}(1+\alpha(h))$ are the roots of $h+2u^2-u^4=0$,
and $\bar I_0$ and $\bar I_2$ satisfy the Picard-Fuchs equation:
\begin{equation}\label{I02}
4h(1+h)\left(\begin{array}{ll}\bar I_0'\\[1ex] \bar I_2'\end{array}\right)=
\left(\begin{array}{ll}4+3h &-5\\[1ex] -h &5h\end{array}\right)
\left(\begin{array}{ll}\bar I_0\\[1ex] \bar I_2\end{array}\right).
\end{equation}

We get $J_k(h)$ by direct computations,
\begin{equation*}\begin{split}
J_k(h)&=\frac{(1+\alpha(h))^{k+1}-(1+\beta(h))^{k+1}}{k+1}\\
&=\frac{\left(1+\sqrt{1+h}\right)^{\frac{k+1}{2}}-\left(1-\sqrt{1+h}\right)^{\frac{k+1}{2}}}{(k+1)(-h)^{\frac{k+1}{2}}},\quad k\neq-1.
\end{split}\end{equation*}
If $k$ is odd, i.e., $k=2m+1$, then
\begin{equation}\begin{split}\label{Jk4}
J_{2m+1}(h)=&\frac{\left(1+\sqrt{1+h}\right)^{m+1}-\left(1-\sqrt{1+h}\right)^{m+1}}{(2m+2)(-h)^{m+1}}\\
=&\frac{\sqrt{1+h}}{(m+1)(-h)^{m+1}}\sum_{j=0}^{[\frac{m}{2}]}C_{m+1}^{2j+1}(1+h)^{j},\quad \quad m\geq0,
\end{split}\end{equation}
and if $k$ is even, i.e., $k=2m$, then
\begin{equation}\begin{split}\label{Jk5}
J_{2m}(h)=&\frac{\left(1+\sqrt{1+h}\right)^{m+\frac{1}{2}}-\left(1-\sqrt{1+h}\right)^{m+\frac{1}{2}}}{(2m+1)(-h)^{m+\frac{1}{2}}}\\
=&\frac{\left[\left(1+\sqrt{1+h}\right)^{m+\frac{1}{2}}-\left(1-\sqrt{1+h}\right)^{m+\frac{1}{2}}\right]
\left[\left(1+\sqrt{1+h}\right)^{\frac{1}{2}}+\left(1-\sqrt{1+h}\right)^{\frac{1}{2}}\right]}
{(2m+1)(-h)^{m+\frac{1}{2}}\left[\left(1+\sqrt{1+h}\right)^{\frac{1}{2}}+\left(1-\sqrt{1+h}\right)^{\frac{1}{2}}\right]}\\
=&\frac{\left(1+\sqrt{1+h}\right)^{m+1}-\left(1-\sqrt{1+h}\right)^{m+1}+
\sqrt{-h}\left[\left(1+\sqrt{1+h}\right)^{m}-\left(1-\sqrt{1+h}\right)^{m}\right]}
{(2m+1)(-h)^{m+\frac{1}{2}}\sqrt{2(1+\sqrt{-h})}}
\\
=&\frac{\sqrt{2(1-\sqrt{-h})}}
{(2m+1)(-h)^{m+\frac{1}{2}}}\left(\displaystyle\sum_{j=0}^{[\frac{m}{2}]}C_{m+1}^{2j+1}(1+h)^{j}
+\sqrt{-h}\displaystyle\sum_{j=0}^{[\frac{m-1}{2}]}C_{m}^{2j+1}(1+h)^{j}\right)\\
=&\frac{\sqrt{2(1-\sqrt{-h})}}
{(2m+1)(-h)^{m+\frac{1}{2}}}\displaystyle\sum_{j=0}^{m}C_{j,m}\left(\sqrt{-h}\right)^j, \quad \quad m>0,
\end{split}\end{equation}
where $C_{j,m}, j=0,1,\cdots,m$ are constants, and
\begin{equation}\begin{split}\label{J4}
&J_{-6}(h)=\dfrac{\sqrt{2(1-\sqrt{-h})}}{5}(4+2\sqrt{-h}+h),\\
&J_{-5}(h)=J_{-3}(h)=\sqrt{1 + h},\\
&J_{-4}(h)=\dfrac{\sqrt{2(1-\sqrt{-h})}}{3}(2+\sqrt{-h}),\\
&J_{-2}(h)=\sqrt{2(1-\sqrt{-h})},\\
&J_{-1}(h)=\ln\frac{1+\sqrt{1+h}}{\sqrt{-h}},\\
&J_{0}(h)=\dfrac{\sqrt{2(1-\sqrt{-h})}}{\sqrt{-h}}.
\end{split}\end{equation}
Using \eqref{M4}-\eqref{I41} and \eqref{Jk4}-\eqref{J4}, we have
\begin{proposition} The first order Melnikov function $M(h)$ for system $S_4$ is:
\begin{equation*}
\begin{split}
M(h)=&\alpha_0\bar I_2+\beta_0\sqrt{1 + h},\quad\ n=0,\\
M(h)=&\alpha_0\bar I_2+\delta_0(1+h)+\sqrt{1-\sqrt{-h}}(\beta_0(2+\sqrt{-h})+\beta_1h)
+\gamma_0\sqrt{1 + h},\quad\ n=1,\\
M(h)=&\alpha_0\bar I_2+\xi_0\bar I_0+\delta_0(1+h)+\sqrt{1-\sqrt{-h}}(\beta_0(2+\sqrt{-h})+\beta_1h)
+\gamma_0\sqrt{1 + h}\\
&+\eta_0hJ_{-1}(h),\quad\ n=2,\\
M(h)
=&\alpha_0\bar I_2+\xi_0\bar I_0+(1-\sqrt{-h})(\delta_0+\delta_1\sqrt{-h})+\sqrt{1-\sqrt{-h}}P_2(\sqrt{-h})
+\gamma_0\sqrt{1 + h}\\
&
+\eta_0hJ_{-1}(h),\quad\ n=3,\\
M(h)
=&\alpha_{-1}h^{-1}(4\bar I_0-5\bar I_2)+\alpha_0\bar I_2+\xi_0\bar I_0+(1-\sqrt{-h})(\delta_0+\delta_1\sqrt{-h})\\&+\sqrt{1-\sqrt{-h}}P_2(\sqrt{-h})
+\gamma_0\sqrt{1 + h}
+(\eta_0+\eta_1h)J_{-1}(h),\quad\ n=4,\\
M(h)
=&\alpha_{-1}h^{-1}(4\bar I_0-5\bar I_2)+\alpha_0\bar I_2+\xi_0\bar I_0+((-h)^{-\frac{1}{2}}-1)P_2(\sqrt{-h})\\&+\sqrt{1-\sqrt{-h}}(-h)^{-\frac{1}{2}}
P_3(\sqrt{-h})
+\gamma_0\sqrt{1 + h}
+(\eta_0+\eta_1h)J_{-1}(h),\quad\ n=5,
\end{split}
\end{equation*}
and for $n\geq6$ even,
\begin{equation*}
\begin{split}
M(h)=&((-h)^{-\frac{1}{2}}-1)P_2(\sqrt{-h})
+(-h)^{\frac{5-n}{2}}(1+h)P_{\frac{n-6}{2}}(h)+(-h)^{\frac{4-n}{2}}\left(P_{\frac{n-4}{2}}(h)\bar I_2+\bar P_{\frac{n-4}{2}}(h)\bar I_0\right)\\[1ex]
&+(\eta_0+\eta_1h)J_{-1}(h)+\sqrt{1 + h}(-h)^{\frac{4-n}{2}}P_{\frac{n-4}{2}}(h)
+\sqrt{1-\sqrt{-h}}(-h)^{\frac{5-n}{2}}P_{n-3}(\sqrt{-h}),
\end{split}
\end{equation*}
for $n\geq7$ odd,
\begin{equation*}
\begin{split}
M(h)
=&((-h)^{-\frac{1}{2}}-1)P_2(\sqrt{-h})
+(-h)^{\frac{4-n}{2}}(1+h)P_{\frac{n-5}{2}}(h)+(-h)^{\frac{5-n}{2}}\left(P_{\frac{n-5}{2}}(h)\bar I_2+\bar P_{\frac{n-5}{2}}(h)\bar I_0\right)\\
&+(\eta_0+\eta_1h)J_{-1}(h)+\sqrt{1+h}(-h)^{\frac{5-n}{2}}P_{\frac{n-5}{2}}(h)
+\sqrt{1-\sqrt{-h}}(-h)^{\frac{4-n}{2}}P_{n-2}(\sqrt{-h}),
\end{split}
\end{equation*}
where $J_{-1}(h)$ is given in \eqref{J4}, and $P_k$ and $\bar P_k$ represent polynomials of degree $k$.
\end{proposition}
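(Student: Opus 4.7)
The plan is to substitute the explicit formulas \eqref{Ik4}--\eqref{I4}, \eqref{Ik41}--\eqref{I41}, and \eqref{Jk4}--\eqref{J4} for $I_{k-6}(h)$ and $J_{k-6}(h)$ directly into the representation \eqref{M4}, and then regroup the resulting double sum according to the type of ``atomic'' function that appears: rational functions of $h$, the elliptic pieces $\bar I_0$ and $\bar I_2$, the radical $\sqrt{1+h}$, the radical $\sqrt{1-\sqrt{-h}}$ multiplied by a polynomial in $\sqrt{-h}$, and the logarithmic function $J_{-1}(h)$. The refined coefficient bounds \eqref{mkn} (rather than only the crude bound \eqref{mk4}) are essential throughout, because they pin down exactly how low the powers of $h$ in $\overline{m}_k$ and $\widetilde{m}_k$ can go in the ``tail'' indices $k\geq n+1$ and $k\geq n+2$, which in turn dictates which rational prefactors survive after cancellation.

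First I would dispatch the $J$-sum. Splitting $\sum_{k}\widetilde{m}_k(h)J_{k-6}(h)$ by the parity of $k-6$, formula \eqref{Jk4} yields $\sqrt{1+h}$ times a polynomial in $1/h$ and $1+h$, while \eqref{Jk5} yields $\sqrt{1-\sqrt{-h}}$ times a polynomial in $\sqrt{-h}$ divided by a power of $(-h)$. Feeding in the degree of $\widetilde{m}_k(h)$ from \eqref{mkn} and telescoping the common factors, the odd-index contributions collapse to $(-h)^{(4-n)/2}P_{(n-4)/2}(h)\sqrt{1+h}$ (or its odd-$n$ analogue), while the even-index contributions collapse to $(-h)^{(5-n)/2}P_{n-3}(\sqrt{-h})\sqrt{1-\sqrt{-h}}$. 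The logarithm $J_{-1}(h)$ arises only from the indices $k=5$ and $k=7$, and by \eqref{mkn} the polynomials $\widetilde{m}_5, \widetilde{m}_7$ have degree at most one, which yields exactly the $(\eta_0+\eta_1 h)J_{-1}(h)$ piece. The leftover polynomial-in-$\sqrt{-h}$ pieces from the special values \eqref{J4} combine into the term $((-h)^{-1/2}-1)P_2(\sqrt{-h})$.

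Next I would treat the $I$-sum in the same spirit. For odd indices $k-6=2m+1$, \eqref{Ik4} produces $(1+h)(-h)^{-3/2}$ times a polynomial in $1/h$, which after multiplication by $\overline{m}_k(h)$ of the degree allowed by \eqref{mkn} telescopes into the single term $(1+h)(-h)^{(5-n)/2}P_{(n-6)/2}(h)$ (for even $n$, and the analogous form for odd $n$). For even indices $k-6=2m$, \eqref{Ik41} writes $I_{2m}$ as $(-h)^{-m-1}(f_m\bar I_2+g_m\bar I_0)$ with $\deg f_m,\deg g_m\leq m/2$, so the combined $\bar I_0$- and $\bar I_2$-contributions take the form $(-h)^{(4-n)/2}(P_{(n-4)/2}(h)\bar I_2+\bar P_{(n-4)/2}(h)\bar I_0)$. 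The isolated low-$n$ cases $I_{-6},I_{-4},I_{-2},I_0$ from \eqref{I41} produce the $\alpha_0\bar I_2,\xi_0\bar I_0$ and $\alpha_{-1}h^{-1}(4\bar I_0-5\bar I_2)$ summands for $n\leq 4$, and get absorbed into the polynomial prefactors once $n\geq 6$.

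For $n=0,1,2,3,4,5$ the sums are short enough that each case is written out directly from the special values \eqref{I4}, \eqref{I41}, \eqref{J4}, giving the explicit formulas in the statement. The main technical obstacle is the degree bookkeeping for general $n\geq 6$: one must verify that every telescoping produces \emph{exactly} the polynomial degrees $P_{(n-6)/2}, P_{(n-4)/2}, P_{n-3}, P_{n-2}$ claimed --- no higher, and with the stated powers of $(-h)$ as common factors --- and that no stray logarithmic or $\bar I_0, \bar I_2$ term leaks out of its stated prefactor. This is precisely where \eqref{mkn}, as opposed to the weaker \eqref{mk4} used in \cite{LLLZ}, is indispensable, and where the refinement here pays off when we later invoke this expression to prove Theorem \ref{th:S4}.
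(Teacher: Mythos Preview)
Your proposal follows essentially the same route as the paper: split the double sum \eqref{M4} by the parity of the index, substitute the explicit formulas \eqref{Ik4}--\eqref{J4}, and use the refined degree information \eqref{mkn} (split at $k\leq n$ versus $k\geq n+1$, respectively $k\leq n+1$ versus $k\geq n+2$) to control the polynomial prefactors in each of the four resulting blocks before reassembling. One small slip: the logarithm $J_{-1}(h)$ arises from the \emph{single} index $k=5$ (since $J_{k-6}=J_{-1}$ only there), not from $k=5$ and $k=7$; the linear coefficient $(\eta_0+\eta_1 h)$ is entirely $\widetilde m_5$, whose degree bound $[5/4]=1$ already gives what you need.
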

\begin{proof}
The computations of $M(h)$ for $n\leq5$ are straightforward and thus we omit here. We mainly concern on the case $n\geq6$ even, and the case
$n\geq7$ odd can be obtained in a similar way.

For $n\geq6$ even,
\begin{equation*}
\begin{split}
M(h)=&\sum_{k=0}^{2n}\overline{m}_k(h)I_{k-6}(h)+
\sum_{k=0}^{2n+1}\widetilde{m}_k(h)J_{k-6}(h)\\
=&\sum_{i=0}^{n-1}\overline{m}_{2i+1}(h)I_{2i-5}(h)+\sum_{i=0}^{n}\overline{m}_{2i}(h)I_{2i-6}(h)+
\sum_{i=0}^{n}\widetilde{m}_{2i+1}(h)J_{2i-5}(h)+\sum_{i=0}^{n}\widetilde{m}_{2i}(h)J_{2i-6}(h).
\end{split}
\end{equation*}
Note that for $i\geq3$,
\begin{equation*}
\begin{split}
\overline{m}_{2i+1}(h)I_{2i-5}(h)=\left\{\begin{array}{ll}
(-h)^{\frac{3}{2}-i}(1+h)P_{i-2}(h) & \mbox{for $i\leq \frac{n-2}{2}$,} \\
(-h)^{\frac{5-n}{2}}(1+h)P_{\frac{n-6}{2}}(h) & \mbox{for $i\geq  \frac{n}{2}$,}
\end{array} \right.\\
\end{split}
\end{equation*}
\begin{equation*}
\begin{split}
\widetilde{m}_{2i+1}(h)J_{2i-5}(h)=\left\{\begin{array}{ll}
\sqrt{1+h}(-h)^{2-i}P_{i-2}(h) & \mbox{for $i\leq \frac{n}{2}$,} \\
\sqrt{1+h}(-h)^{\frac{4-n}{2}}P_{\frac{n-4}{2}}(h) & \mbox{for $i\geq  \frac{n+2}{2}$,}
\end{array} \right.\\
\end{split}
\end{equation*}
and for $i\geq4$,
\begin{equation*}
\begin{split}
\overline{m}_{2i}(h)I_{2i-6}(h)=\left\{\begin{array}{ll}
(-h)^{2-i}\left(P_{[\frac{i-4}{2}]+[\frac{i}{2}]}(h)\bar I_2+P_{i-2}(h)\bar I_0\right) & \mbox{for $i\leq \frac{n}{2}$,} \\
(-h)^{\frac{4-n}{2}}\left(P_{[\frac{i-4}{2}]+\frac{n}{2}-[\frac{i+1}{2}]}(h)\bar I_2+P_{\frac{n-4}{2}}(h)\bar I_0\right) & \mbox{for $i\geq  \frac{n+2}{2}$,}
\end{array} \right.\\
\end{split}
\end{equation*}
\begin{equation*}
\begin{split}
\widetilde{m}_{2i}(h)J_{2i-6}(h)=\left\{\begin{array}{ll}
\sqrt{(1-\sqrt{-h})}(-h)^{\frac{5}{2}-i}
P_{2[\frac{i}{2}]+i-3}(\sqrt{-h}) & \mbox{for $i\leq \frac{n}{2}$,} \\
\sqrt{(1-\sqrt{-h})}(-h)^{\frac{5-n}{2}}P_{n-3+2[\frac{i}{2}]-i}(\sqrt{-h}) & \mbox{for $i\geq  \frac{n+2}{2}$.}
\end{array} \right.\\
\end{split}
\end{equation*}
Therefore, for $n\geq6$ even,
\begin{equation*}
\begin{split}
M(h)=&\sum_{i=0}^{(n-2)/2}\overline{m}_{2i+1}(h)I_{2i-5}(h)
+\sum_{i=n/2}^{n-1}\overline{m}_{2i+1}(h)I_{2i-5}(h)\\
&+\sum_{i=0}^{n/2}\overline{m}_{2i}(h)I_{2i-6}(h)
+\sum_{i=(n+2)/2}^{n}\overline{m}_{2i}(h)I_{2i-6}(h)\\
&+\sum_{i=0}^{n/2}\widetilde{m}_{2i+1}(h)J_{2i-5}(h)
+\sum_{i=(n+2)/2}^{n}\widetilde{m}_{2i+1}(h)J_{2i-5}(h)\\
&+\sum_{i=0}^{n/2}\widetilde{m}_{2i}(h)J_{2i-6}(h)
+\sum_{i=(n+2)/2}^{n}\widetilde{m}_{2i}(h)J_{2i-6}(h)\\
=&(1-\sqrt{-h})(\delta_{-1}(-h)^{-\frac{1}{2}}+\delta_0+\delta_1\sqrt{-h})
+(-h)^{\frac{5-n}{2}}(1+h)P_{\frac{n-6}{2}}(h)\\[1ex]
&+h^{-1}(P_1(h)\bar I_0+\bar P_1(h)\bar I_2)
+(-h)^{\frac{4-n}{2}}\left(P_{\frac{n-4}{2}}(h)\bar I_2+\bar P_{\frac{n-4}{2}}(h)\bar I_0\right)\\[1ex]
&+\gamma_0\sqrt{1 + h}+(\eta_0+\eta_1h)J_{-1}(h)+\sqrt{1+h}(-h)^{\frac{4-n}{2}}P_{\frac{n-4}{2}}(h)\\[1ex]
&+\sqrt{1-\sqrt{-h}}(-h)^{-\frac{1}{2}}P_3(\sqrt{-h})
+\sqrt{1-\sqrt{-h}}(-h)^{\frac{5-n}{2}}
P_{n-3}(\sqrt{-h})\\[1ex]
=&((-h)^{-\frac{1}{2}}-1)P_2(\sqrt{-h})
+(-h)^{\frac{5-n}{2}}(1+h)P_{\frac{n-6}{2}}(h)+(-h)^{\frac{4-n}{2}}\left(P_{\frac{n-4}{2}}(h)\bar I_2+\bar P_{\frac{n-4}{2}}(h)\bar I_0\right)\\[1ex]
&+(\eta_0+\eta_1h)J_{-1}(h)+\sqrt{1 + h}(-h)^{\frac{4-n}{2}}P_{\frac{n-4}{2}}(h)
+\sqrt{1-\sqrt{-h}}(-h)^{\frac{5-n}{2}}P_{n-3}(\sqrt{-h}).
\end{split}
\end{equation*}
\end{proof}
Now, we begin to estimate the number of zeros of $M(h)$ obtained above.
By the definition of $\bar I_0(h)$ (see \eqref{barIk}), it is easy to know that
\[
\bar I_0(h)=-\frac{1}{2}\oint_{u^4-2u^2+y^2=h}y\mathrm{d}u=\frac{1}{2}\iint_{u^4-2u^2+y^2\leq h}\mathrm{d}\sigma>0
\]
in $h\in(-1,0)$. Let
\begin{equation}
v(h)=\frac{\bar I_2(h)}{\bar I_0(h)},
\end{equation}
then by \eqref{barIk} and Picard-Fuchs equation \eqref{I02}, we have
\begin{lemma}\label{le:v}
$v=v(h)$ is the solution of the differential system
\begin{equation}\label{vh}
\dot h=4h(1+h),\quad \dot v=-h+2(-2+h)v+5v^2,
\end{equation}
satisfying $v'(h)<0$ for $h\in(-1,0)$, $\lim_{h\rightarrow-1^+}v(h)=1$ and $\lim_{h\rightarrow0^-}v(h)=4/5$.
\end{lemma}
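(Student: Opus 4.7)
My plan is to address the four claims of the lemma in turn: the ODE satisfied by $v$, the limit at $h=0^-$, the limit at $h=-1^+$, and the monotonicity. The first is purely algebraic and follows directly from the Picard--Fuchs system \eqref{I02}: reading off $\bar I_0'=((4+3h)\bar I_0-5\bar I_2)/(4h(1+h))$ and $\bar I_2'=(-h\bar I_0+5h\bar I_2)/(4h(1+h))$, substituting into $v'(h)=\bar I_2'/\bar I_0-v\cdot\bar I_0'/\bar I_0$, and simplifying, I obtain $4h(1+h)\,v'(h)=5v^2+2(h-2)v-h$, which is precisely \eqref{vh} under the time reparametrization $\dot h=4h(1+h)$.

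For the endpoint limits I argue directly from the integral definition \eqref{barIk}. At $h=0$ the integrand reduces to $u^k\sqrt{2u^2-u^4}$ on $[0,\sqrt 2]$, and elementary substitution gives $\bar I_0(0)=2\sqrt 2/3$ and $\bar I_2(0)=8\sqrt 2/15$, so $v(0^-)=4/5$. At $h=-1^+$, the roots of $h+2u^2-u^4=0$ are $u^2=1\pm\sqrt{1+h}$, so that the support of integration satisfies $u^2\in[1-\sqrt{1+h},\,1+\sqrt{1+h}]$. Hence the weighted average $v(h)=\bar I_2(h)/\bar I_0(h)$ lies in this shrinking interval, and the squeeze yields $v(-1^+)=1$.

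For monotonicity, the ODE shows that on $(-1,0)$, where $4h(1+h)<0$, the sign of $v'(h)$ is opposite to the sign of $\phi(h):=5v(h)^2+2(h-2)v(h)-h$. Viewing $\phi$ as a quadratic in $v$, its roots are $v_\pm(h)=((2-h)\pm\sqrt{h^2+h+4})/5$, with $\phi>0$ iff $v>v_+$ or $v<v_-$. A short computation shows $v_+'(h)<0$ throughout $(-1,0)$ (since $(2h+1)^2<4(h^2+h+4)$), with $v_+(-1)=1$ and $v_+(0)=4/5$, so $v$ and $v_+$ share the same boundary values. I will prove the stronger statement $v(h)>v_+(h)$ on $(-1,0)$, which immediately gives $v'(h)<0$.

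The argument has two steps. First, writing $v(h)=1+a(h+1)+o(h+1)$ and matching orders in the ODE at the $0/0$ singularity forces $a=-1/8$, while direct differentiation gives $v_+'(-1)=-1/4$; hence $v-v_+\sim (h+1)/8>0$ just to the right of $-1$. Second, if $v-v_+$ had a first zero at some $h_1\in(-1,0)$, then $\phi(h_1)=0$ would force $v'(h_1)=0$ from the ODE, whence $(v-v_+)'(h_1)=-v_+'(h_1)>0$; this contradicts $v-v_+\ge 0$ on $(-1,h_1]$ with equality at $h_1$. The main obstacle is the asymptotic matching at $h=-1^+$, since both sides of the ODE vanish there and the expansion must be justified with care; once this is done, the contradiction argument is self-contained and avoids any delicate analysis at $h=0^-$.
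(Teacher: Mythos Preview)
Your proposal is correct and follows essentially the same route as the paper: derive the Riccati equation from the Picard--Fuchs system, identify the nullcline $v_+(h)=\big((2-h)+\sqrt{h^2+h+4}\big)/5$, compare slopes at $h=-1$ to get $v>v_+$ nearby, and then propagate this inequality across $(-1,0)$ to conclude $v'<0$. Your endpoint arguments (direct evaluation of $\bar I_0(0),\bar I_2(0)$ and the weighted-average squeeze at $h=-1^+$) are in fact a bit cleaner than the paper's, which instead records the asymptotic expansions $\bar I_0=\tfrac{\pi}{4}(h+1)+\tfrac{3\pi}{128}(h+1)^2+\cdots$ and $\bar I_2=\tfrac{\pi}{4}(h+1)-\tfrac{\pi}{128}(h+1)^2+\cdots$; note that these expansions are exactly what closes the one gap you flag, since they justify the $C^1$ expansion $v(h)=1-\tfrac18(h+1)+o(h+1)$ that your formal ODE matching presupposes.
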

The proof of Lemma \ref{le:v} is given in Appendix A.3.
\begin{proposition}
$H_4(0)=1$ and $H_4(1)=4$.
\end{proposition}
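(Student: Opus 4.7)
The plan is to treat the two cases separately. For $n=0$, the Melnikov function has the very simple form
\[
M(h)=\alpha_{0}\bar I_{2}(h)+\beta_{0}\sqrt{1+h},\qquad h\in(-1,0),
\]
so (after factoring out the positive $\sqrt{1+h}$) the number of zeros equals the number of values $h$ at which $g(h):=\bar I_{2}(h)/\sqrt{1+h}$ attains a given constant. I would show $g$ is strictly monotone: differentiating and using the second row of the Picard--Fuchs system \eqref{I02} to replace $\bar I_{2}'(h)$ by $(5\bar I_{2}-\bar I_{0})/(4(1+h))$, one gets
\[
g'(h)=\frac{3\bar I_{2}(h)-\bar I_{0}(h)}{4(1+h)^{3/2}}=\frac{\bar I_{0}(h)\bigl(3v(h)-1\bigr)}{4(1+h)^{3/2}},
\]
with $v(h)=\bar I_{2}/\bar I_{0}$. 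By Lemma \ref{le:v}, $v(h)\in(4/5,1)$, so $3v(h)-1>0$, and $\bar I_{0}(h)>0$, giving $g'(h)>0$ throughout $(-1,0)$. Hence $M$ has at most one zero. The matching lower bound $H_{4}(0)\ge 1$ follows from Lemma \ref{le:CGP} applied to the two linearly independent generators $\bar I_{2}$ and $\sqrt{1+h}$, the second being positive on $(-1,0)$.

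For $n=1$, the Melnikov function reads
\[
M(h)=\alpha_{0}\bar I_{2}(h)+\delta_{0}(1+h)+\sqrt{1-\sqrt{-h}}\bigl(\beta_{0}(2+\sqrt{-h})+\beta_{1}h\bigr)+\gamma_{0}\sqrt{1+h}.
\]
I would first pass to the variable $s=\sqrt{-h}\in(0,1)$ and divide by the positive factor $\sqrt{1-s}$, rewriting $M/\sqrt{1-s}$ as a linear combination of the five functions
\[
\frac{\bar I_{2}(-s^{2})}{\sqrt{1-s}},\quad (1+s)\sqrt{1-s},\quad 1,\quad s,\quad s^{2},\quad \sqrt{1+s},
\]
(three of the polynomial/algebraic functions contributed by the $\beta_{0},\beta_{1},\delta_{0}$ terms collapsing with the constant and square-root generators). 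After verifying linear independence, I would attempt to show that a chosen ordering is an ECT-system on $(0,1)$ via Lemma \ref{ECT3}, computing the Wronskians $W_{1},\dots,W_{5}$ in turn. Each time $\bar I_{2}$ is differentiated, the Picard--Fuchs equation \eqref{I02} replaces $\bar I_{2}^{\prime}$ by a rational combination of $\bar I_{0}$ and $\bar I_{2}$, and the sign of the final Wronskian will ultimately be determined by the sign of a rational expression in $v(h)$, which is pinned down by Lemma \ref{le:v}. If the last Wronskian has an isolated simple zero, Lemma \ref{le:NT} would still give the sharp bound of $4$ zeros.

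The lower bound $H_{4}(1)\ge 4$ is the easy direction: the five generators above are linearly independent on $(0,1)$ (seen from their distinct orders of vanishing or divergence at the endpoints) and the generator $\sqrt{1+h}$ has constant sign, so Lemma \ref{le:CGP} furnishes a parameter choice producing $4$ simple zeros. Combined with the upper bound this yields $H_{4}(1)=4$.

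The main obstacle will clearly be the Chebyshev verification in the $n=1$ case: the five-step Wronskian computation mixes the abelian integral $\bar I_{2}$ with algebraic functions in two different square roots, and its final entry will involve $3v(h)-1$-type quantities whose positivity must be extracted from Lemma \ref{le:v}. If the last Wronskian turns out to change sign, I would fall back on the refinements Lemma \ref{le:NT} or Lemma \ref{le:NT2}, whose hypotheses are designed exactly for that situation. The $n=0$ analysis above is essentially a one-line template for the more elaborate $n=1$ computation.
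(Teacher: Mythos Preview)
Your $n=0$ argument is correct and essentially identical to the paper's: the paper computes the Wronskian $W_2$ of the ordered pair $(\sqrt{1+h},\bar I_2)$ and obtains exactly your expression $\bar I_0(3v(h)-1)/\bigl(4(1+h)^{3/2}\bigr)$, then invokes Lemma~\ref{le:v} for the sign.

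For $n=1$ your outline is again the paper's strategy---Chebyshev criterion on five generators with $\bar I_2$ placed last---but two points need correction. First, your generator list is off: after dividing by $\sqrt{1-s}$ the five coefficients $\alpha_0,\delta_0,\beta_0,\beta_1,\gamma_0$ produce the five functions $\bar I_2/\sqrt{1-s}$, $(1+s)\sqrt{1-s}$, $2+s$, $s^2$, $\sqrt{1+s}$; you have split $2+s$ into $1$ and $s$, listing six. (The paper in fact stays in the variable $h$ and never divides through, but this is cosmetic.) Second, and this is the real gap, you understate the work needed for the last Wronskian. In the paper $W_5$ reduces, up to a nonvanishing factor, to
\[
Z_5(h)=\frac{40+7\sqrt{-h}-27h+17h\sqrt{-h}+3h^2}{-5+2h-h\sqrt{-h}}+10\,v(h),
\]
and the bare range $v(h)\in(4/5,1)$ from Lemma~\ref{le:v} is \emph{not} sufficient to determine the sign of $Z_5$ on $(-1,0)$: both endpoint limits of $Z_5$ are $0$ and $Z_5$ is not monotone. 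What the paper actually does is use the Riccati equation~\eqref{vh} to compute $Z_5'\big|_{Z_5=0}$, show that this quantity is strictly negative throughout $(-1,0)$, and combine this with the asymptotics of $Z_5$ at $h\to-1^+$ and $h\to 0^-$ (where $Z_5<0$ near both ends) to rule out any interior zero. That Riccati-type argument is the substantive technical content of the $n=1$ case, and your proposal does not yet contain it. Note also that your fallback is miscalibrated: with five generators, a single simple zero of $W_5$ would make Lemma~\ref{le:NT} yield $Z(\mathcal F)=5$, not $4$, so sharpness would be lost; fortunately the paper shows $W_5$ never vanishes and the system is a genuine ECT-system.
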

\begin{proof}
For $n=0$, let $f_1=\sqrt{1+h},\ f_2=\bar I_2$.
Using
\begin{equation*}\begin{split}
W_1=&\sqrt{1+h}>0,\\
W_2=&\frac{
\bar I_0}{4\sqrt{1+h}}(3v(h)-1)>0,\\
\end{split}\end{equation*}
in $h\in(0,1)$, we know that $M(h)$ has at most one zero in $(-1,0)$ for $n=0$ by \emph{Chebyshev criterion}.

For $n=1$, let
\[
f_1=\sqrt{1+h}, \quad f_2=(2+\sqrt{-h})\sqrt{1-\sqrt{-h}}, \quad
f_3=h\sqrt{1-\sqrt{-h}},\quad
f_4=1+h, \quad f_5=\bar I_2.
\]
We can get
\begin{equation*}\begin{split}
W_1=&\sqrt{1+h}>0,\\
W_2=&-\frac{
(1-\sqrt{-h})^{3/2}}{4\sqrt{1+h}}<0,\\
W_3=&\dfrac{3(1-\sqrt{-h})^{2}}{32(1+\sqrt{-h})\sqrt{-h(1+h)}}>0,\\
W_4=&\dfrac{3(-7+\sqrt{-h})}{1024h(1+\sqrt{-h})^2\sqrt{-h(1+h)}}
>0,\\
W_5=&\dfrac{9(1-\sqrt{-h})(-5+2h-h\sqrt{-h})\bar I_0}{131072h^{4}(1+h)^{11/2}}
\left(\dfrac{40+7\sqrt{-h}-27h+17h\sqrt{-h}+3h^2}{-5+2h-h\sqrt{-h}}+10v(h)\right)>0,
\end{split}\end{equation*}
thus by \emph{Chebyshev criterion}, $M(h)$ has at most four zeros in $(-1,0)$ for $n=1$.

In fact, denote the function in the parenthesis of $W_5$ by $Z_5$. It is easy to verify that $Z_5$ is not a monotonous function
in $(-1,0)$. From \eqref{vh},
\[
Z'_5|_{Z_5=0}=\dfrac{-140+251\sqrt{-h}+642h+332(-h)^{3/2}+70h^2-15(-h)^{5/2}}
{8\sqrt{-h}(-5+2h-h\sqrt{-h})^2}
<0,\]
moreover, we have the asymptotic expansions of $Z_5$ near $h=-1$ and $h=0$:
\begin{equation*}\begin{split}
Z_5&=-\dfrac{1}{4}(h+1)-\dfrac{41}{384}(h+1)^2+o\left((h+1)^2\right),\quad\quad h\rightarrow-1^+,\\
Z_5&=-\dfrac{7}{5}\sqrt{-h}-\dfrac{1}{10}h(-82+90\log2-15\log(-h))+o\left(-h\right),\quad\quad h\rightarrow0^-.
\end{split}\end{equation*}
Since $\lim_{h\rightarrow-1^+}Z_5=\lim_{h\rightarrow0^-}Z_5=0$ and $Z_5$ is always negative near $h\rightarrow-1^+$ and $h\rightarrow0^-$, if $Z_5$ has zeros in $(-1,0)$, then it has at least two (taking into account their multiplicity), and one of them satisfies $Z'_5\geq0$, which contradicts with $Z'_5|_{Z_5=0}<0$. Thus, $Z_5$ has none zeros in $(-1,0)$, which implies that $W_5$ is positive in $(-1,0)$.
\end{proof}
\begin{proposition}
$H_4(2)=7$.
\end{proposition}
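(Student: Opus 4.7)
My plan follows the Chebyshev-style template established in the proofs of $H_4(0)=1$ and $H_4(1)=4$, extended to accommodate the two new generating functions ($\bar I_0$ and $hJ_{-1}(h)$) that appear at $n=2$. For the upper bound $H_4(2)\leq 7$ I would take the ordered set
\begin{equation*}
\mathcal{F}=(f_1,\ldots,f_7):=\bigl(\sqrt{1+h},\ (2+\sqrt{-h})\sqrt{1-\sqrt{-h}},\ h\sqrt{1-\sqrt{-h}},\ 1+h,\ \bar I_2,\ \bar I_0,\ hJ_{-1}(h)\bigr),
\end{equation*}
which extends the $n=1$ ordered set by adjoining $\bar I_0$ and $hJ_{-1}(h)$. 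Since the first five Wronskians $W_1,\ldots,W_5$ were already shown to be sign-definite on $(-1,0)$ in the previous proposition, the whole analysis reduces to controlling $W_6$ and $W_7$. The goal is to establish that $W_1,\ldots,W_6$ are nonvanishing while $W_7$ has exactly one simple zero on $(-1,0)$, and then to apply Lemma \ref{le:NT}, which yields $Z(\mathcal{F})=7$.

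For $W_6$, the presence of $\bar I_0$ couples the computation to the Picard-Fuchs system \eqref{I02}. Recursively eliminating the derivatives $\bar I_0^{(k)},\bar I_2^{(k)}$ through \eqref{I02}, and factoring out $\bar I_0$ together with the algebraic prefactors that carry a definite sign, reduces $W_6$ to a polynomial expression $P(h,v)$ in the Abelian ratio $v(h)=\bar I_2/\bar I_0$. By Lemma \ref{le:v}, $v$ parametrises $(-1,0)$ monotonically between $1$ and $4/5$, so sign-definiteness of $P(h,v(h))$ can be checked by combining explicit boundary asymptotics at $h\to -1^+$ and $h\to 0^-$ with an exclusion-of-oscillation argument: if the derivative $(dP/dh)|_{P=0}$ along the integral curve of \eqref{vh} has fixed sign, interior zeros are ruled out, mirroring the $Z_5$ analysis in the $n=1$ case.

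The crux is $W_7$, the only Wronskian that feels the logarithm $J_{-1}(h)=\ln\bigl((1+\sqrt{1+h})/\sqrt{-h}\bigr)$; it enters linearly, so we may write $W_7=A(h)+B(h)J_{-1}(h)$ with $A,B$ algebraic. Following the $Z_5$ strategy, I would form the auxiliary quantity $Z_7:=W_7/B-J_{-1}$ and differentiate once. Since $J_{-1}'(h)=-1/(2h\sqrt{1+h})$, the derivative $Z_7'$ becomes purely algebraic in $h,\sqrt{1+h},\sqrt{-h},\bar I_0,\bar I_2$, and further use of \eqref{I02} reduces its sign analysis to a Sturm-type question on a polynomial factor. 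Counting the zeros of $Z_7'$ on $(-1,0)$ and transferring the count to $Z_7$ by the two-sided asymptotic expansions at $h\to -1^+$ and $h\to 0^-$ should pin down a unique simple zero of $Z_7$, hence of $W_7$, as required.

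The lower bound $H_4(2)\geq 7$ then follows automatically: the seven coefficients $\alpha_0,\xi_0,\delta_0,\beta_0,\beta_1,\gamma_0,\eta_0$ are independent linear functions of the parameters $a^{\pm}_{ij},b^{\pm}_{ij}$, verified by a $7\times 7$ Jacobian exactly as in Proposition \ref{PS12}; combined with the realizability clause of Lemma \ref{le:NT}, this produces a perturbation with the full 7 zeros. The principal obstacle is the sign analysis of $W_7$: the simultaneous handling of the transcendental term $J_{-1}$ and the Picard-Fuchs structure of $(\bar I_0,\bar I_2)$, culminating in a sharp count of \emph{exactly} one simple zero (not zero, not two), is both conceptually and computationally the most delicate part, since anything other than this precise count would miss the true value $H_4(2)=7$.
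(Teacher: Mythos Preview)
Your overall architecture---an ordered ECT-type system of seven generators, control of all but the last Wronskian, and Lemma~\ref{le:NT} to turn a single simple zero of $W_7$ into the sharp value $Z(\mathcal F)=7$---is exactly the endgame the paper uses. The substantive difference is \emph{where} the logarithm is eliminated. The paper first passes to $r=\sqrt{1+h}$, writes $F(r)=M(h)$, and removes $J_{-1}$ \emph{before} any Wronskian computation via $(F/(1-r^2))'=G/(1-r^2)^2$; this produces seven log-free generators $g_1,\dots,g_7$ in which the elliptic data enters only through the pair $(I(r),J(r))$. In that basis the paper can recycle the already-proved nonvanishing of $W_1,\dots,W_6$ from \cite{CLZ}, and $W_7$ factors as a \emph{quadratic} in $w(s)=I/J$ (with $s=\sqrt{1-r^2}$). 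The unique simple zero of $W_7$ is then located by a curve-intersection argument in the $(s,w)$-plane: resultants in $s$ (Sturm) count exactly two contact points of the vector field \eqref{ws} with the conic $\Psi(s,w)=0$, an auxiliary line $w=2/5$ separates the branches, and boundary asymptotics of $w(s)$ and $w_\pm(s)$ force precisely one transverse intersection with the solution curve $\Gamma$.

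By contrast, your direct Wronskian on $(f_1,\dots,f_7)$ carries both the log and the elliptic pair simultaneously. Concretely, columns $5$ and $6$ are linear in $(\bar I_0,\bar I_2)$ via \eqref{I02}, so already $W_6$ is a \emph{quadratic} in $v=\bar I_2/\bar I_0$, not linear as in the $n=1$ analogue; the ``$(dP/dh)|_{P=0}$ has fixed sign'' trick you invoke must now be carried out on two algebraic branches $v_\pm(h)$, which is doable but not the one-line argument you sketch. The real issue is $W_7$: writing $W_7=A+BJ_{-1}$ as you propose, the cofactors $A,B$ are themselves quadratic in $(\bar I_0,\bar I_2)$, so after one differentiation the equation $Z_7'=0$ becomes, upon clearing $B^2$, a homogeneous form of degree~$4$ in $(\bar I_0,\bar I_2)$ with algebraic-in-$h$ coefficients. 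This is \emph{not} ``a Sturm-type question on a polynomial factor'': $v(h)$ is transcendental, and counting intersections of the integral curve of \eqref{vh} with a quartic in $v$ (rather than a conic in $w$) is a genuinely harder geometric problem than the one the paper solves. Nothing in your outline explains how you would certify \emph{exactly one} simple zero at that stage. So the gap is not in the strategy but in the claimed reduction: the paper's preliminary log-elimination is what collapses the transcendental degree from quartic to quadratic and makes the contact-point/resultant argument tractable.
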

\begin{proof}
For $n=2$, notice that by the change
\[
\cos\theta=\frac{1-u^2}{\sqrt{1+h}}, \quad \theta\in[0,\pi],\]
\begin{equation}\label{barI01}
\begin{split}
&\bar I'_0=\int_{u_1}^{u_2}\frac{1}{2\sqrt{h+2u^2-u^4}}\mathrm{d}u
=\frac{1}{4}\displaystyle\int_0^{\pi}\frac{1}{\sqrt{1-\sqrt{1+h}\cos\theta}}\mathrm{d}\theta
=\frac{1}{4}J(\sqrt{1+h}),\\
&\bar I'_2=\int_{u_1}^{u_2}\frac{u^2}{2\sqrt{h+2u^2-u^4}}\mathrm{d}u
=\frac{1}{4}\displaystyle\int_0^{\pi}\sqrt{1-\sqrt{1+h}\cos\theta}\mathrm{d}\theta
=\frac{1}{4}I(\sqrt{1+h}),
\end{split}
\end{equation}
where
\begin{equation}\begin{split}\label{IJ}
I(r)&=\displaystyle\int_0^{\pi}\sqrt{1-r\cos\theta}\mathrm{d}\theta
=2\sqrt{1+r}E\left(\frac{2r}{1+r}\right),\\
J(r)&=\displaystyle\int_0^{\pi}\frac{1}{\sqrt{1-r\cos\theta}}\mathrm{d}\theta
=\frac{2}{\sqrt{1+r}}K\left(\frac{2r}{1+r}\right)
\end{split}\end{equation}
are the functions defined in \cite{CLZ}. Then use \eqref{I02}, and let
$r=\sqrt{1+h}$,
\begin{equation}\begin{split}
F(r)=M(h)|_{h=r^2-1}=rf(r)+k_0f_0,
\end{split}\end{equation}
where $f_0=r$ and $f(r)$ is the averaged function obtained in \cite{CLZ}, with
\begin{equation}\begin{split}
&k_0=\frac{1}{2}(3\sqrt{2}\beta_0-\sqrt{2}\beta_1+2\gamma_0-2\eta_0)=a^+_{00}-a^-_{00},\\
&k_1=\delta_0,\quad k_2=
-\frac{\beta_0-\beta_1}{\sqrt{2}},\quad k_3=\frac{\beta_1}{\sqrt{2}},\quad k_4=\frac{\eta_0}{2}, \quad
k_5=\frac{\alpha_0}{5},\quad k_6=\frac{\alpha_0+5\xi_0}{15}.
\end{split}\end{equation}
We exploit the same approach than in \cite{CLZ}, and eliminate the logarithm function first. Since
\begin{equation}
\left(\dfrac{F(r)}{1-r^2}\right)'=\dfrac{G(r)}{(1-r^2)^2},
\end{equation}
and $G(r)$ has as many zeros as $F(r)$ in $r\in(0,1)$, we consider $G(r)$ in the following, where
\begin{equation}\label{G}
G(r)=m_1 g_1+m_2 g_2+m_3 g_3+m_4 g_4+m_5 g_5+m_6 g_6+m_7g_7,
\end{equation}
with
\begin{equation}\begin{split}\label{g}
g_1&=r,\\
g_2&=r^2,\\
g_3&=6+4r(\sqrt{1-r}-\sqrt{1+r})-(3+r^2)(\sqrt{1-r}+\sqrt{1+r}),\\
g_4&=-4+2(1+r^2)(\sqrt{1-r}+\sqrt{1+r})+r(-5+r^2)(\sqrt{1-r}-\sqrt{1+r}),\\
g_5&=r\left((-5+r^2)I(r)+(1-r^2)J(r)\right),\\
g_6&=r\left(4I(r)-(1-r^2)J(r)\right),\\
g_7&=1,
\end{split}\end{equation}
and
\begin{equation*}\begin{split}
m_1=2k_1,\,\,\, m_2=3k_2-2k_3+4k_4+k_0,\,\,\, m_3=\dfrac{k_2}{2},\,\,\,m_4=\dfrac{k_3}{2},\,\,\, m_5=-\dfrac{k_5}{2},\,\,\, m_6=\dfrac{k_6}{2},\,\, m_7=k_0.
\end{split}\end{equation*}
By the results in \cite{CLZ}, we know that the Wronskian determinants $W_1, W_2, \cdots, W_6$ on the ordered set $(g_1, g_2,\cdots, g_7)$ do not vanish in $r\in(0,1)$. Thus it suffices to consider the Wronskian determinant
\[
W_7=-\frac{6075 (1-s)J^2(\sqrt{1-s^2})}{8192 s^{18} \left(1-s^2\right)^3}\left(Y_{70}+Y_{71}w(s)+Y_{72}w^2(s)\right),
\]
where $s=\sqrt{1-r^2}$,
\begin{equation}\begin{split}
Y_{70}=&s^2 \left(-960-960 s+305136 s^2+314496 s^3-291576 s^4-288351 s^5+28820 s^6\right.\\
&\left.-1205 s^7+4170 s^8+46375 s^9+14000 s^{10}+525 s^{11}+1050 s^{12}\right),\\
Y_{71}=&2 \left(1920+1920
   s-17184 s^2-35424 s^3-464928 s^4-469008 s^5+364162 s^6\right.\\
   &\left.+396377 s^7-37560 s^8-110965 s^9-44870 s^{10}+14175 s^{11}+6300 s^{12}+ 525 s^{13}\right),\\
Y_{72}=&-18624-22464 s+101040 s^2+125280 s^3+524168 s^4+592453 s^5-259404 s^6\\
&-365129 s^7-104350 s^8-15645 s^9+84000 s^{10}+19425 s^{11}-3150 s^{12},
\end{split}\end{equation}
and $w(s)=I(r)/J(r)|_{r=\sqrt{1-s^2}}$ is the solution of the following differential system
\begin{equation}\label{ws}
\dot{s}=2s(1-s^2),\quad \dot{w}=s^2-2s^2w+w^2,
\end{equation}
satisfying $w'(s)>0$, and
\[
\lim_{s\rightarrow 0^+} w(s)=0, \quad \quad \lim_{s\rightarrow1^-}w(s)=1.
\]
Define
\[
\Psi(s,w)=Y_{70}+Y_{71}w+Y_{72}w^2.
\]
Then the number of zeros of $W_7$ in $(0,1)$ equals the number of intersection points of the
curve $C=\{(s,w)|\Psi(s,w)=0,s\in(0,1)\}$ and the curve $\Gamma=\{w=w(s),s\in(0,1)\}$
in the $(s,w)$-plane.

First, we show that the curve $C$ and $\Gamma$ can intersect at least one point.
It is easy to know that $Y_{72}$ has a unique zero $s_0$ in $(0,1)$ by Sturm Theorem, and $17/50<s_0<7/20$. If
$s=s_0$, then $\Psi=0$ implies that $w_0=-Y_{70}(s_0)/Y_{71}(s_0)$. In the following, we consider $\Psi$ in $s\in (0,s_0)\bigcup(s_0,1)$.
Let $C_{+}$ and $C_{-}$ be two branches of the curve $C$, denoted by
\begin{equation}
C_{+}=\{w_{+}(s)=\dfrac{-Y_{71}+\sqrt{\Delta(s)}}{2Y_{72}}\}, \quad
C_{-}=\{w_{-}(s)=\dfrac{-Y_{71}-\sqrt{\Delta(s)}}{2Y_{72}}\},
\end{equation}
where $\Delta(s)=Y_{71}^2-4Y_{70}Y_{72}>0$ in $s\in(0,1)$ by Sturm Theorem, with
\begin{equation*}\begin{split}
\Delta(s)=&900 (1 + s)^4 (16384 - 32768 s - 323584 s^2 + 352256 s^3 +
   19012608 s^4 - 29902848 s^5\\
   & - 47736576 s^6 + 165632640 s^7 +
   3358208 s^8 - 270031520 s^9 + 144902688 s^{10}\\
   & + 18987696 s^{11} +
   278405247 s^{12} - 359686304 s^{13} + 66366873 s^{14} + 15590624 s^{15}\\
   & +11191110 s^{16} + 12549152 s^{17} - 8746430 s^{18} - 960400 s^{19} +
   376075 s^{20}\\
   & - 117600 s^{21} + 15925 s^{22}).
\end{split}\end{equation*}
A direct computation shows that
\begin{equation*}\begin{split}
&\lim_{s\rightarrow 0^+} w_{+}(s)=0, \quad \lim_{s\rightarrow s_0^-} w_{+}(s)=-\infty, \quad \lim_{s\rightarrow s_0^+}w_{+}(s)=+\infty,
\quad \lim_{s\rightarrow1^-}w_{+}(s)=1,\\
& \lim_{s\rightarrow 0^+}w_{-}(s)=\frac{20}{97}, \quad \lim_{s\rightarrow s_0^-} w_{-}(s)=\lim_{s\rightarrow s_0^+} w_{-}(s)=w_0,  \quad \lim_{s\rightarrow1^-}w_{-}(s)=\frac{1}{5},
\end{split}\end{equation*}
which implies that $w_{+}(s)$ is not continuous in $s_0$, while $w_{-}(s)$ is continuous in $s_0$, and thus is  continuous in $(0,1)$.
Further, $w_{+}(s)$ and $w(s)$ have the asymptotic expansions when $s\rightarrow0^+$,
\begin{equation*}\begin{split}
w_{+}(s)=&\frac{1}{4}s^2 - \frac{4923}{64}s^4+o(s^4),\\
w(s)=&\frac{4}{6\log2-2\log s}+o\left(\frac{1}{6\log2-2\log s}\right),
\end{split}\end{equation*}
and when $s\rightarrow1^-$,
\begin{equation*}\begin{split}
w_{+}(s)=&1-\frac{1}{2}(1-s)+\frac{7}{16}(1-s)^2+\frac{22407}{8768}(1-s)^3+o((1-s)^3),\\
w(s)=&1-\frac{1}{2}(1-s)-\frac{1}{32}(1-s)^2-\frac{3}{128}(1-s)^3+o((1-s)^3).
\end{split}\end{equation*}
Comparing these results, we have
\begin{equation*}\begin{split}\label{0}
&w_{-}(s)>w(s)>w_{+}(s),\quad \mbox{$s\rightarrow0^+$},\\
&w_{+}(s)>w(s)>w_{-}(s), \quad \mbox{$s\rightarrow1^-$},
\end{split}\end{equation*}
and hence the curve $\Gamma$ intersects $C_{-}$ at least one point $(s^*,w^*)$.

Next, we show that there exist exactly two points of $C$ at which the vector field \eqref{ws} is tangent on $C$. We call them contact points.
A direct computation shows that
\[
\Phi(s,w)=\left(\frac{\partial \Psi(s,w)}{\partial s},\frac{\partial \Psi(s,w)}{\partial w}\right)\cdot(\dot{s},\dot{w})=-2\sum_{k=0}^{k=3}\phi_k(s)w^k,
\]
where
\begin{equation*}\begin{split}
\phi_0(s)=& s^3 (960 - 1205280 s - 1539936 s^2 + 3434928 s^3 +
     4059945 s^4 - 2344178 s^5\\
     & - 2403989 s^6 + 226420 s^7 -
     410005 s^8 - 81430 s^9 + 489125 s^{10}\\
     & + 147000 s^{11} + 6300 s^{12} +
     14700 s^{13}),\\
\phi_1(s)=& s(-3840 + 91200 s + 242688 s^2 + 3515280 s^3 + 4281408 s^4 -
     9543392 s^5\\& - 11769827 s^6 + 5958632 s^7 + 8704531 s^8 +
     325670 s^9 - 2515505 s^{10}\\& - 1222340 s^{11} + 307125 s^{12} +
     166950 s^{13} + 14700 s^{14}),\\
\phi_2(s)=& -1920 + 20544 s - 222144 s^2 - 407808 s^3 - 1227584 s^4 -
 1866857 s^5 \\&+ 4337270 s^6 + 6306697 s^7 - 1202872 s^8 -
 3034391 s^9 - 1838630 s^{10}\\& - 399945 s^{11} + 1039500 s^{12} +
 252000 s^{13} - 44100 s^{14},\\
\phi_3(s)=&-Y_{72}.
\end{split}\end{equation*}
Obviously, $\phi_3(s)$ does not vanish in $(0,s_0)\bigcup(s_0,1)$. Thus, the resultant $R$
of $\Psi(s,w)$ and $\Phi(s,w)$ with respect to $w$, has the form $R=810000(1-s)^2s^4(1+s)^{10}Y_{72}R_1(s)R_2(s)$,
where
\begin{equation*}\begin{split}
R_1(s)=&-15360 - 30720 s + 690176 s^2 +
   257920 s^3 + 800384 s^4 + 1928800 s^5- 1741120 s^6\\&  -
   1638704 s^7 + 1429155 s^8 + 79254 s^9 - 266350 s^{10} - 42140 s^{11} +
   42875 s^{12} + 7350 s^{13},\\
R_2(s)=&10764288 - 43057152 s + 1390657536 s^2 -
   5132058624 s^3 + 1851543552 s^4 \\& + 16976596992 s^5-
   22820136960 s^6 - 11403103872 s^7 + 17152685568 s^8 \\& +
   58829080800 s^9 - 117428834304 s^{10} + 82319936688 s^{11} -
   32444436073 s^{12}\\& - 8041553870 s^{13} + 49398619998 s^{14} -
   23623365500 s^{15} - 20007160159 s^{16}\\& + 13090741902 s^{17} +
   90897156 s^{18} - 1222410840 s^{19} + 772710057 s^{20} +
   175441070 s^{21}\\& - 135536450 s^{22} + 7923300 s^{23} + 3301375 s^{24} +
   120050 s^{25}.
\end{split}\end{equation*}
By Sturm Theorem, $R_1(s)$ has a unique zero $s_1$ in $(4/25,17/100)$, and
$R_2(s)$ has a unique zero $s_2$ in $(12/25,49/100)$, which means that there
exist $w_1$ and $w_2$, such that
\[
\Psi(s_1,w_1)=\Phi(s_1,w_1)=\Psi(s_2,w_2)=\Phi(s_2,w_2)=0.
\]
Besides,
\[
\lim_{s\rightarrow s_0^+}\left(w'_{-}(s)-\frac{\mathrm{d}w}{\mathrm{d}s}\Big|_{w=w_{-}(s)}\right)=
\lim_{s\rightarrow s_0^-}\left(w'_{-}(s)-\frac{\mathrm{d}w}{\mathrm{d}s}\Big|_{w=w_{-}(s)}\right)\approx0.34\neq0.
\]
This confirms that there are exactly two points of the curve $C$ at which the vector field \eqref{ws} is tangent to $C$.

To prove that $\Psi(s,w(s))$ has a unique zero in $s\in(0,1)$, we introduce an
auxiliary straight line $w=2/5$. By the Sturm Theorem,
\begin{equation}\begin{split}
\Psi(s,2/5)=&\frac{1}{25}(-36096 - 51456 s + 36480 s^2 - 231360 s^3 + 426512 s^4 +
   852052 s^5\\& - 1043776 s^6 - 741751 s^7 - 448100 s^8 - 2312005 s^9 -
   457150 s^{10}\\& + 1520575 s^{11}
    + 463400 s^{12} + 23625 s^{13} + 26250 s^{14})<0,
\end{split}\end{equation}
which demonstrates that the straight line $w=2/5$ is above the curve $C_{-}$ in $(0,1)$. It follows from the values of the endpoints of
$w(s)$ and the monotonicity of $w(s)$, that the straight line $w=2/5$ and the curve $\Gamma$ will intersect at a unique point $(s_*,2/5)$
 with $1/25<s_*<1/10$. Thus $w_{-}(s_*)<2/5=w(s_*)$, and $w(s)>2/5>w_{-}(s)$ holds in $s\in(s_*,1)$, which shows that
the curves $\Gamma$ and $C_{-}$ intersect at least one point $(s^*,w^*)$ when $s\in(0,s_*)$ and can not intersect when $s\in(s_*,1)$. Since
\begin{equation*}\begin{split}
&\lim_{s\rightarrow0^+}\left(w'_{-}(s)-\frac{\mathrm{d}w}{\mathrm{d}s}\Big|_{w=w_{-}(s)}\right)=-\infty,\\
&\lim_{s\rightarrow\frac{3}{10}}\left(w'_{-}(s)-\frac{\mathrm{d}w}{\mathrm{d}s}\Big|_{w=w_{-}(s)}\right)\approx0.39,\\
&\lim_{s\rightarrow1^-}\left(w'_{-}(s)-\frac{\mathrm{d}w}{\mathrm{d}s}\Big|_{w=w_{-}(s)}\right)=-\infty,
\end{split}\end{equation*}
there exist two points on $C_{-}$ at which the vector field \eqref{ws} is tangent to the curve $C_{-}$.
By the result above, the curve $\Gamma$ can not intersect $C_{-}$ and $C_{+}$ in other point, otherwise,
extra contact point will emerge, which results in a contradiction.
Hence, $\Psi(s,w(s))$, as well as $W_7$, has a unique zero in $s\in(0,1)$.

Finally, note that $s^*<s_*<s_1<s_2$, thus the unique zero of $W_7$ is simple. Combined with
$W_i\not\equiv0, i=1,2,\cdots,6$, it follows from Lemma \ref{le:NT}
that there exists a linear combination of $G(r)$ such that $G(r)$ has exactly $7$ zeros.
Thus, $F(r)$ can have at most $7$ zeros in $r\in(0,1)$, which is equivalent to $M(h)$ having at most $7$ zeros in $h\in(-1,0)$ for $n=2$.
\end{proof}

\begin{figure}[h]
\centering
\includegraphics[width=.45\textwidth]{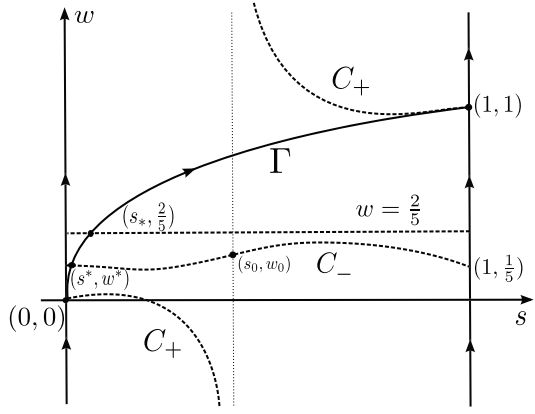}
\caption{\small{The curve $\Gamma$ has a unique common point with $C_{-}$.}}
\label{fig}
\end{figure}

For $n\geq3$, we eliminate the different kinds of functions by taking derivatives. First, we get rid of the logarithm
function by taking a second order derivative and then classify the derived function $M''(h)$ into four kinds of functions. Next, we eliminate two kinds of functions which include polynomials of $h$ as numerators by multiplying nonzero factors and taking derivatives.  Finally,  it suffices to consider the derived function, which is
described in more detail in the following. Take the case $n\geq6$ even for example.

\noindent$(i)$ Eliminate the logarithm function
\begin{equation}\begin{split}\label{d1}
M''
=&\dfrac{P_{\frac{n-4}{2}}(h)}{(-h)^{\frac{n-1}{2}}}
+\dfrac{P_{\frac{n-2}{2}}(h)\bar I_2+\bar P_{\frac{n-2}{2}}(h)\bar I_0}{(-h)^{\frac{n}{2}}(1+h)}+\dfrac{P_{\frac{n}{2}}(h)}{(1+h)^{\frac{3}{2}}(-h)^{\frac{n}{2}}}
+\dfrac{P_{n-1}(\sqrt{-h})}{(1-\sqrt{-h})^{\frac{3}{2}}(-h)^{\frac{n-1}{2}}},
\end{split}\end{equation}
$(ii)$ eliminate the first part of $M''$ by induction
\begin{equation}\begin{split}\label{d2}
F=&\left((-h)^{\frac{n-1}{2}}M''\right)^{(\frac{n-2}{2})}\\
=&
\dfrac{P_{n-2}(h)\bar I_2+\bar P_{n-2}(h)\bar I_0}{(-h)^{\frac{n-1}{2}}(1+h)^{\frac{n}{2}}}
+\dfrac{P_{\frac{n}{2}}(h)}{(1+h)^{\frac{n+1}{2}}(-h)^{\frac{n-1}{2}}}
+\dfrac{P_{\frac{3n}{2}-3}(\sqrt{-h})}{(1-\sqrt{-h})^{\frac{n+1}{2}}(-h)^{\frac{n-3}{2}}},
\end{split}\end{equation}
$(iii)$ eliminate the second part of $F$ by induction
\begin{equation}\begin{split}\label{d3}
G=&\left((1+h)^{\frac{n+1}{2}}(-h)^{\frac{n-1}{2}}F\right)^{(\frac{n+2}{2})}\\
=&\left(\sqrt{1+h}(P_{n-2}(h)\bar I_2+\bar P_{n-2}(h)\bar I_0)
+(-h)(1+\sqrt{-h})^{\frac{n+1}{2}}P_{\frac{3n}{2}-3}(\sqrt{-h})\right)^{(\frac{n+2}{2})}\\
=&
\dfrac{P_{\frac{3n}{2}-1}(h)\bar I_2+\bar P_{\frac{3n}{2}-1}(h)\bar I_0}{(-h)^{\frac{n+2}{2}}(1+h)^{\frac{n+1}{2}}}
+\dfrac{P_{2n-3}(\sqrt{-h})}{(1+\sqrt{-h})^{\frac{1}{2}}(-h)^{\frac{n-1}{2}}}.
\end{split}\end{equation}
Notice that $M(-1)=0$, thus,
\begin{equation}\begin{split}\label{N1}
H_4(n)&\leq \#\{-1<h<0|G(h)=0\}+\dfrac{n+2}{2}+\dfrac{n-2}{2}+2-1\\
&\leq \#\{-1<h<0|G(h)=0\}+n+1,
\end{split}\end{equation}
where $\#$ denotes the number of elements of a finite set.

We need consider the zeros of $G$ in $(-1,0)$ and we will give a rough estimate of zeros of $G$ using the method in \cite{LLLZ}. Let
\begin{equation}
\begin{split}
G_1&=\dfrac{P_{2n-3}(\sqrt{-h})}{(1+\sqrt{-h})^{\frac{1}{2}}(-h)^{\frac{n-1}{2}}},\\
G_2&=\dfrac{P_{\frac{3n}{2}-1}(h)}{(-h)^{\frac{n+2}{2}}(1+h)^{\frac{n+1}{2}}},\\
G_0&=\dfrac{\bar P_{\frac{3n}{2}-1}(h)}{(-h)^{\frac{n+2}{2}}(1+h)^{\frac{n+1}{2}}}.
\end{split}
\end{equation}
Obviously, $G_1$ has at most $2n-3$ zeros in $(-1,0)$.
\begin{equation*}
\begin{split}
\left(\frac{G}{G_1}\right)'=U_2\bar I_2+U_0\bar I_0,
\end{split}
\end{equation*}
where
\begin{equation*}
\begin{split}
U_2&=\dfrac{1}{4G_1^2h(h+1)}(4 h (h+1)(G_1G_2'-G_2G_1')+5hG_1G_2-5G_0G_1)\\
&=\dfrac{(-h)^{-1/2-n}(1+\sqrt{-h})^{-1/2}(1+h)^{-(n+1)/2} P_{5n-4}(\sqrt{-h})}{4G_1^2h(h+1)},\\
U_0&=\dfrac{1}{4G_1^2h(h+1)}(4 h (h+1)(G_1G_0'-G_0G_1')+(3h+4)G_0G_1-hG_1G_2)\\
&=\dfrac{(-h)^{-1/2-n}(1+\sqrt{-h})^{-1/2}(1+h)^{-(n+1)/2}\bar P_{5n-3}(\sqrt{-h})}{4G_1^2h(h+1)}.\\
\end{split}
\end{equation*}
Let $g=P_{5n-4}(\sqrt{-h})\bar I_2+\bar P_{5n-3}(\sqrt{-h})\bar I_0$.
Then
\begin{equation}\label{N1G}
\#\{\left(\frac{G}{G_1}\right)'=0\}\leq\#\{g=0\}.
\end{equation}
Using the method in \cite{LLLZ}, consider the function
\[
U=\frac{\bar I_2}{\bar I_0}+\frac{\bar P_{5n-3}(\sqrt{-h})}{P_{5n-4}(\sqrt{-h})},
\]
and compute the number of zeros of $U$ by a Ricatti equation. Then we have
\[
\#\{g=0\}\leq 15n-8.
\]
With \eqref{N1} and \eqref{N1G},
\[
H_4(n)\leq 15n-8+1+2n-3+2n-3+n+2-1=20n-12.
\]
Similarly, for $n\geq7$ odd,
\[
H_4(n)\leq 15n-8+1+2n-2+2n-2+n+2-1=20n-10,
\]
and for $n=3,4,5$, $H_4(n)\leq 12n+4.
$
\section{Zeros of $M(h)$ for system $S_4$ in the smooth case}\label{sec:S4s}

This section is devoted to giving an improved result on the number of zeros of the first Melnikov function $M(h)$ for quadratic isochronous center $S_4$ in \cite{LLLZ}.

When the perturbation polynomials are smooth, by the result in Section \ref{sec:S4}, we have
\begin{equation*}
\begin{split}
M(h)=((-h)^{-\frac{1}{2}}-1)P_2(\sqrt{-h})
+(-h)^{\frac{5-n}{2}}(1+h)P_{\frac{n-6}{2}}(h)+(-h)^{\frac{4-n}{2}}\left(P_{\frac{n-4}{2}}\bar I_2+\bar P_{\frac{n-4}{2}}\bar I_0\right),
\end{split}
\end{equation*}
for $n\geq6$ even, and $n\geq7$ odd,
\begin{equation*}
\begin{split}
M(h)=&((-h)^{-\frac{1}{2}}-1)P_2(\sqrt{-h})
+(-h)^{\frac{4-n}{2}}(1+h)P_{\frac{n-5}{2}}(h)+(-h)^{\frac{5-n}{2}}\left(P_{\frac{n-5}{2}}\bar I_2+\bar P_{\frac{n-5}{2}}\bar I_0\right).
\end{split}
\end{equation*}
We will estimate the number of zeros  of $M(h)$
for $n\geq6$ even in the following. The case of $n\geq7$ odd can be obtained in a similar way.

Using the result in \eqref{d1} and \eqref{d2},
\begin{equation*}
\left((-h)^{\frac{n-1}{2}}M''\right)^{(\frac{n-2}{2})}=
\dfrac{P_{n-2}(h)\bar I_2+\bar P_{n-2}(h)\bar I_0}{(-h)^{\frac{n-1}{2}}(1+h)^{\frac{n}{2}}}.
\end{equation*}
Let $g=P_{n-2}(h)\bar I_2+\bar P_{n-2}(h)\bar I_0$, and notice that $M(-1)=0$, then
\begin{equation}\begin{split}\label{N2}
\#\{-1<h<0|M(h)=0\}&\leq \#\{g=0\}+\dfrac{n-2}{2}+2-1=\#\{g=0\}+\dfrac{n}{2}.
\end{split}\end{equation}
Let $\mathbf{I}(h)=(\bar I_0,\bar I_2)^\top$, then by \eqref{I02}, $\mathbf{I}(h)$ satisfies
a two-dimensional first-order Fuchsian system
\begin{equation}\label{Ih}
\mathbf{I}(h)=\mathbf{A}(h)\mathbf{I}'(h),
\end{equation}
where
\begin{equation*}\label{A}
\mathbf{A}(h)=
\left(\begin{array}{cc}\frac{4h}{3} &\frac{4}{3}\\[1ex] \frac{4h}{15} & \frac{4(4+3h)}{15} \end{array}\right).
\end{equation*}
It is easy to verify that system \eqref{Ih} satisfies the assumptions $(\mathrm{H1})$-$(\mathrm{H3})$, see Appendix A.4 or \cite{GI} for details. That is,
\vspace{-10pt}
\begin{itemize}
\item[(1)]$\mathbf{A}'=\left(\begin{array}{cc}\frac{4}{3} &0\\[1ex] \frac{4}{15} & \frac{4}{5} \end{array}\right)$ is a constant matrix which has real distinct eigenvalues $\frac{4}{3}$ and $\frac{4}{5}$.
\item[(2)]$\det \mathbf{A}(h)=\frac{16}{15}h(1+h)$ has real distinct zeros $h_0=-1, h_1=0$, and the identity
trace $\mathbf{A}(h)\equiv(\det \mathbf{A}(h))'=\frac{16}{15}(1+2h)$.
\item[(3)]$\mathbf{I}(h)$ is analytic in a neighborhood of $-1$.
\end{itemize}
Thus $\lambda=3/4$, and $\lambda^*=3/4$. It follows from Theorem \ref{th:GI} and $\dim g=2n-2$ that an upper bound of the number of zeros of
$g$ is $(2n-3)+1=2n-2$. Moreover, $-1$ is a trivial zero  of $g$, hence
\[
\#\{g=0\}\leq 2n-3,
\]
and it follows from \eqref{N2} that
\[
\#\{-1<h<0|M(h)=0\}\leq \frac{5n-6}{2}.
\]
Similarly, for $n\geq7$ odd,
\[
\#\{-1<h<0|M(h)=0\}\leq 2n-3+\frac{n-1}{2}+2-1=\frac{5n-5}{2}.
\]
Thus, we have that the upper bound of the number of zeros of $M(h)$ is $[\frac{5n-5}{2}]$.
Specially, this upper bound is applicable to the cases of $n=2,3,4,5$, except that the maximum number of
zeros of $M(h)$ is $i$ for $n=i, i=0,1$.

\section{Acknowledgements}
We appreciate the helpful suggestions of Professor Changjian Liu. The first author is partially supported by NSF of China (Grant No. 11401111, No. 11571379, No. 11571195 and No. 11601257). The second author is partially supported by NSF of China (Grant No. 11571195). The third author is partially supported by NSF of China (Grant No. 11231001 and No. 11371213).

\section*{Appendix}
\noindent{\bf{A.1\ Proof of Lemma \ref{lem:P}.}} We only consider the case when the functions have the form $P_n(x)/(x^{p}(a+x)^{q})$, and
the other case can be obtained in a similar way.

Let $P_n(x)=\sum_{k=0}^nd_k x^k$. Then
\begin{equation}\begin{split}
\displaystyle\frac{P_n(x)}{x^{p}(a+x)^{q}}=\sum_{k=0}^{n}d_k x^{k-p}(a+x)^{-q}.
\end{split}\end{equation}
\begin{equation}\begin{split}\label{xj}
&\left(x^{k-p}(a+x)^{-q}\right)^{(j)}\\
=&\displaystyle\sum_{i=0}^{j}C_{j}^{i}\left(x^{k-p}\right)^{(i)}\left((a+x)^{-q}\right)^{(j-i)}\\
=&\displaystyle\sum_{i=0}^{j}C_{j}^{i}\left(\prod_{m=0}^{i-1}(k-p-m)x^{k-p-i}\right)
\left(\prod_{m=0}^{j-i-1}(-q-m)(a+x)^{-q-j+i}\right)\\
=&\frac{1}{x^{p+j}(a+x)^{q+j}}\displaystyle\sum_{i=0}^{j}C_{j}^{i}\prod_{m=0}^{i-1}(k-p-m)\prod_{m=0}^{j-i-1}(-q-m)x^{k+j-i}(a+x)^{i}\\
=&\frac{1}{x^{p+j}(a+x)^{q+j}}\displaystyle\sum_{i=0}^{j}C_{j}^{i}\prod_{m=0}^{i-1}(k-p-m)\prod_{m=0}^{j-i-1}(-q-m)x^{k+j-i}\sum_{s=0}^{i}C_{i}^{s}a^{s}x^{i-s}\\
=&\frac{1}{x^{p+j}(a+x)^{q+j}}\displaystyle\sum_{i=0}^{j}C_{j}^{i}\prod_{m=0}^{i-1}(k-p-m)\prod_{m=0}^{j-i-1}(-q-m)\sum_{s=0}^{i}C_{i}^{s}a^{s}x^{k+j-s}\\
=&\frac{1}{x^{p+j}(a+x)^{q+j}}\displaystyle\sum_{s=0}^{j}C_{k+j-s}x^{k+j-s}.
\end{split}\end{equation}
We claim that for fixed $0\leq s\leq j$, the coefficient of $x^{k+j-s}$, denoted by $C_{k+j-s}$, satisfies
\begin{equation}\label{ckjs}
C_{k+j-s}=a^sC_{j}^{s}\prod_{m=s}^{j-1}(k-p-q-m)\prod_{m=0}^{s-1}(k-p-m),
\end{equation}
where we set $$\prod_{m=i}^{i-1}(y-m)=1.$$
Obviously, by the last equality of \eqref{xj},
\begin{equation}\begin{split}
C_{k+j-s}=&a^s\displaystyle\sum_{i=s}^{j}C_{j}^{i}\prod_{m=0}^{i-1}(k-p-m)\prod_{m=0}^{j-i-1}(-q-m)C_{i}^{s}\\
=&a^s\displaystyle\sum_{i=s}^{j}C_{j}^{s}C_{j-s}^{i-s}\prod_{m=0}^{i-1}(k-p-m)\prod_{m=0}^{j-i-1}(-q-m)\\
=&a^sC_{j}^{s}\displaystyle\sum_{i=0}^{j-s}C_{j-s}^{i}\prod_{m=0}^{i+s-1}(k-p-m)\prod_{m=0}^{j-i-s-1}(-q-m)\\
=&a^sC_{j}^{s}\prod_{m=0}^{s-1}(k-p-m)\displaystyle\sum_{i=0}^{j-s}
C_{j-s}^{i}\prod_{m=s}^{i+s-1}(k-p-m)\prod_{m=0}^{j-i-s-1}(-q-m).
\end{split}\end{equation}
Thus, it suffices to prove that
\begin{equation}\label{kpq}
\prod_{m=s}^{j-1}(k-p-q-m)
=\displaystyle\sum_{i=0}^{j-s}C_{j-s}^{i}\prod_{m=s}^{i+s-1}(k-p-m)\prod_{m=0}^{j-i-s-1}(-q-m).
\end{equation}
We exploit the method of induction. It is easy to verify that \eqref{kpq} holds for $j=s$ and $j=s+1$. Suppose that
\eqref{kpq} holds for $j=l$, then when $j=l+1$,
\begin{equation*}\begin{split}
&\prod_{m=s}^{l}(k-p-q-m)\\
=&(k-p-q-l)\prod_{m=s}^{l-1}(k-p-q-m)\\
=&(k-p-q-l)\displaystyle\sum_{i=0}^{l-s}C_{l-s}^{i}\prod_{m=s}^{i+s-1}(k-p-m)\prod_{m=0}^{l-i-s-1}(-q-m)\\
=&\displaystyle\sum_{i=0}^{l-s}C_{l-s}^{i}(k-p-(i+s)-q-(l-i-s))\prod_{m=s}^{i+s-1}(k-p-m)\prod_{m=0}^{l-i-s-1}(-q-m)\\
=&\displaystyle\sum_{i=0}^{l-s}C_{l-s}^{i}(k-p-(i+s))\prod_{m=s}^{i+s-1}(k-p-m)\prod_{m=0}^{l-i-s-1}(-q-m)\\
&+\displaystyle\sum_{i=0}^{l-s}C_{l-s}^{i}(-q-(l-i-s))\prod_{m=s}^{i+s-1}(k-p-m)\prod_{m=0}^{l-i-s-1}(-q-m)\\
=&\displaystyle\sum_{i=0}^{l-s}C_{l-s}^{i}\prod_{m=s}^{i+s}(k-p-m)\prod_{m=0}^{l-i-s-1}(-q-m)
+\displaystyle\sum_{i=0}^{l-s}C_{l-s}^{i}\prod_{m=s}^{i+s-1}(k-p-m)\prod_{m=0}^{l-i-s}(-q-m)\\
=&\prod_{m=s}^{l}(k-p-m)
+\displaystyle\sum_{i=1}^{l-s}C_{l-s}^{i-1}\prod_{m=s}^{i+s-1}(k-p-m)\prod_{m=0}^{l-i-s}(-q-m)\\
&+\displaystyle\sum_{i=0}^{l-s}C_{l-s}^{i}\prod_{m=s}^{i+s-1}(k-p-m)\prod_{m=0}^{l-i-s}(-q-m)\\
=&\prod_{m=s}^{l}(k-p-m)
+\displaystyle\sum_{i=1}^{l-s}(C_{l-s}^{i-1}+C_{l-s}^{i})\prod_{m=s}^{i+s-1}(k-p-m)\prod_{m=0}^{l-i-s}(-q-m)+
\prod_{m=0}^{l-s}(-q-m)\\
=&\displaystyle\sum_{i=0}^{l+1-s}C_{l+1-s}^{i}\prod_{m=s}^{i+s-1}(k-p-m)\prod_{m=0}^{l-i-s}(-q-m),
\end{split}\end{equation*}
which means that \eqref{kpq} holds for $j=l+1$. It follows from the method of induction that \eqref{kpq} holds, and that the coefficient expression \eqref{ckjs} is true. Thus, for $j=n+1-(p+q)$,
\[
\left(x^{k-p}(a+x)^{-q}\right)^{(n+1-(p+q))}
=\frac{1}{x^{n+1-q}(a+x)^{n+1-p}}\displaystyle\sum_{s=0}^{n+1-(p+q)}C_{n+k+1-(p+q)-s}x^{n+k+1-(p+q)-s}.
\]
For all $0\leq s<k+1-(p+q),\ k=0,1,\cdots,n$, i.e., the degree of $x$ greater than $n$, the coefficient $C_{n+k+1-(p+q)-s}$ has a factor $\prod_{m=s}^{n-p-q}(k-p-q-m)$, which is equal to zero. Lemma \ref{lem:P} follows.

\noindent{\bf{A.2\ Coefficients of $M(h)$ in \eqref{M3}.}}
The coefficients of $M(h)$ in \eqref{M3} for $n=2$ are as follows. To compare with the averaged function obtained using the averaging method,
we write them as the linear combinations of original parameters $a^+_{ij}$ and $b^+_{ij}$,
instead of $c^+_{ij}$ and $d^+_{ij}$.
\begin{equation}\begin{split}
&\alpha_0=\frac{\pi}{2}\left(2(a^+_{10}+a^-_{10}) -3(b^+_{00}+b^-_{00})+2(b^+_{01}+b^-_{01})\right),\\
&\alpha_1=\frac{\pi}{8}\left(2(a^+_{11}+a^-_{11}) - 9(b^+_{00}+b^-_{00}) + 9(b^+_{01}+b^-_{01})
 - 8(b^+_{02}+b^-_{02}) - 12(b^+_{20}+b^-_{20})\right),\\
&\alpha_2=-\frac{\pi}{16}\left(3(b^+_{00}+b^-_{00}) - 3(b^+_{01}+b^-_{01}) + 3(b^+_{02}+b^-_{02}) +4(b^+_{20}+b^-_{20})\right),\\
&\beta_0=\frac{1}{3}\left(3(a^+_{00}-a^-_{00})-4(a^+_{02}-a^-_{02})- 24(a^+_{20}-a^-_{20})+ 6(b^+_{10}-b^-_{10})\right), \\
&\beta_1= \frac{1}{6}\left(2(a^+_{02}-a^-_{02}) - 3(b^+_{10}-b^-_{10})+ 3(b^+_{11}-b^-_{11})\right), \\
&\gamma_0=2\left (4(a^+_{20}-a^-_{20})-(b^+_{11}-b^-_{11})\right).
\end{split}\end{equation}

\noindent{\bf{A.3\ Proof of Lemma \ref{le:v}.}}
By the definitions of $v(h)$ and \eqref{I02}, it is easy to verify that $v(h)$ satisfies the differential system \eqref{vh}. Obviously, this system has four singularities: two saddles $S_1(-1,1)$ and $S_2(0,0)$, an unstable node $N_1(0,4/5)$ and a stable node $N_2(-1,1/5)$, two invariant lines: $h=-1$ and $h=0$,
and two horizontal isoclines: $v_{\pm}(h)=(2-h\pm\sqrt{4 + h + h^2})/5$.
Definition \eqref{barIk} of $\bar I_k(h)$ shows that the equality $\lim_{h\rightarrow-1^+}v(h)=1$ holds.
Thus, according to the direction of vector field in each region, the graph of $v(h)$ is the stable
manifold of the saddle $S_1$, and it must go to the unstable node $N_1$ as $h$ increases.  It follows that $\lim_{h\rightarrow0^-}v(h)=4/5$.
By a direct computation,
the asymptotic expansions of $\bar I_0$ and $\bar I_2$ near $h=-1$:
\begin{equation}\label{I02-1}
\begin{split}
\bar I_0&=\dfrac{\pi}{4}(h+1)+\dfrac{3\pi}{128}(h+1)^2+o((h+1)^2),\\
\bar I_2&=\dfrac{\pi}{4}(h+1)-\dfrac{\pi}{128}(h+1)^2+o((h+1)^2),
\end{split}
\end{equation}
and when $h\rightarrow0^-$ :
\begin{equation}\label{I02-1}
\begin{split}
\bar I_0&=\frac{2 \sqrt{2}}{3}+\frac{h (-\log(-h)+1+6 \log 2)}{4 \sqrt{2}}+o(h),\\
\bar I_2&=\frac{8 \sqrt{2}}{15}+\frac{h}{\sqrt{2}}-\frac{h^2 (-2 \log (-h)-5+12 \log2)}{64 \sqrt{2}}+o(h^2),
\end{split}
\end{equation}
also shows that the equalities $\lim_{h\rightarrow-1^+}v(h)=1$ and $\lim_{h\rightarrow0^-}v(h)=4/5$ hold.

Next, we prove that $v'(h)<0$ for $h\in(-1,0)$. Note that
\[
v_{-}(-1)=\frac{1}{5},\quad v_{-}(0)=0, \quad v_{+}(-1)=1,\quad v_{-}(0)=\frac{4}{5},
\quad v'_{+}(-1)=-\frac{1}{4}, \quad v'(-1)=-\frac{1}{8}.
\]
We have $v(h)$ is located above $v_+(h)$ and $v_{-}(h)$ near $h\rightarrow-1^+$,
hence it remains above $v_{\pm}(h)$ for $h\in(-1,0)$ by the direction of vector field.
This implies that $v'(h)=(\mathrm{d}v/\mathrm{d}t)/(\mathrm{d}h/\mathrm{d}t)<0$.

\noindent{\bf{A.4\ Two-dimensional Fuchsian systems and the Chebyshev property in \cite{GI}.}}
Consider the functions of the form
\begin{equation}\label{I}
I(h)=p_1(h)I_1(h)+p_2(h)I_2(h), \quad h\in\Sigma,
\end{equation}
where $p_1(h)$ and $p_2(h)$ are polynomials, $I_1(h)$ and $I_2(h)$ are complete Abelian integrals along the ovals
$\gamma(h)$ within a continuous family of ovals
contained in the level sets of a fixed real polynomial $H(x,y)$ (called the
Hamiltonian), and $\Sigma\subset\mathbb{R}$ is the maximal open interval of existence of such ovals.
The vector function $\mathbf{I}(h)=(I_1(h),I_2(h))^{\top}$ satisfies
a two-dimensional first-order Fuchsian system
\begin{equation}\label{I}
\mathbf{I}(h)=\mathbf{A}(h)\mathbf{I}'(h),\quad  '=d/dh,
\end{equation}
with a first-degree polynomial matrix $\mathbf{A(h)}$.

Suppose that
\vspace{-10pt}
\begin{itemize}
\item[(H1)]$\mathbf{A}'$ is a constant matrix having real distinct eigenvalues.\\[-20pt]
\item[(H2)]The equation $\det \mathbf{A}(h)=0$ has real distinct roots $h_0, h_1$ and the identity
trace $\mathbf{A}(h)\equiv(\det \mathbf{A}(h))'$ holds.\\[-20pt]
\item[(H3)]$\mathbf{I}(h)$ is analytic in a neighborhood of $h_0$.
\end{itemize}
\begin{definition}
The real vector space of functions $V$ is said to be Chebyshev in the
complex domain $\mathcal{D}\subset\mathbb{C}$ provided that every function $I\in V\backslash\{0\}$ has at most $\dim V-1$
zeros in $\mathcal{D}$. $V$ is said to be Chebyshev with accuracy $k$ in $\mathcal{D}$ if any function
$I\in V\backslash\{0\}$ has at most $k+\dim V-1$ zeros in $\mathcal{D}$.
\end{definition}
\begin{definition}
Let $I(h), h\in\mathbb{C}$ be a function, locally analytic in a neighborhood of $\infty$
and $s\in \mathbb{R}$. We shall write $I(h)\lesssim h^s$, provided that for every sector $S$ centered at $\infty$
there exists a non-zero constant $C_S$ such that $|I(h)|\leq C_S|h|^s$ for all sufficiently big $|h|, h\in S$.
\end{definition}

For system \eqref{I} satisfying $(\mathrm{H}1)$ and $(\mathrm{H}2)$, the characteristic exponents at infinity
are $-\lambda$ and $-\mu$ where $\lambda'=1/\lambda$ and $\mu'=1/\mu$ are the eigenvalues of the constant
matrix $\mathbf{A}'$ and $\lambda+\mu=2$. Denote $\lambda^*=2$ if $\lambda$ is integer and
$\lambda^*=\max(|\lambda-1|,1-|\lambda-1|)$ otherwise.

Take $s\geq \lambda^*$ and consider the real vector space of functions
\[
V_s=\{I(h)=P(h)I_1(h)+Q(h)I_2(h): P,Q\in \mathbb{R}[h], I(h)\lesssim h^s\},
\]
where $\mathbf{I}(h)=(I_1(h),I_2(h))^{\top}$ is a non-trivial solution of system \eqref{I}, holomorphic in a
neighborhood of $h=h_0$. Then
\begin{theorem}\label{th:GI}
Assume that conditions $(\mathrm{H}1)$-$(\mathrm{H}3)$ hold. If $\lambda\not\in\mathbb{Z}$, then $V_s$ is a Chebyshev
vector space with accuracy $1+[\lambda^*]$ in the complex domain $\mathcal{D}=\mathbb{C}\backslash[h_1,\infty)$. If
$\lambda\in\mathbb{Z}$, then $V_s$ coincides with the space of real polynomials of degree at most $[s]$ which vanish
at $h_0$ and $h_1$.
\end{theorem}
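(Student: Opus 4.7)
My plan is to specialise the piecewise first-order Melnikov function of Section \ref{sec:S4} to the smooth case, reduce the zero-counting problem for $M(h)$ to that of a combination of two Abelian integrals of the form $g(h)=P_{n-2}(h)\bar I_2+\bar P_{n-2}(h)\bar I_0$, and then invoke Theorem \ref{th:GI} on the Picard-Fuchs system \eqref{I02}. First, under $P^+=P^-,Q^+=Q^-$ the $y$-axis boundary integrals in \eqref{Mh} cancel and many of the piecewise contributions in Section \ref{sec:S4} collapse. What remains, for $n\geq 6$ even, is
\[
M(h)=((-h)^{-1/2}-1)P_2(\sqrt{-h})+(-h)^{(5-n)/2}(1+h)P_{(n-6)/2}(h)+(-h)^{(4-n)/2}\bigl(P_{(n-4)/2}\bar I_2+\bar P_{(n-4)/2}\bar I_0\bigr),
\]
with a symmetric form for $n\geq 7$ odd. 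The crucial fact is that the logarithm $J_{-1}(h)$ and the $\sqrt{1\pm\sqrt{-h}}$ blocks, which were irreducible obstructions in the piecewise case, do not appear.

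Next I would run the same derivative pipeline as in \eqref{d1}--\eqref{d2}: $M''$ annihilates the terms of $\sqrt{-h}$-degree $\leq 1$ in the first block; multiplying by $(-h)^{(n-1)/2}$ turns the second block into a true polynomial of degree $\leq (n-4)/2$ in $h$; and taking $(n-2)/2$ further derivatives kills that polynomial exactly, yielding
\[
\left((-h)^{(n-1)/2}M''\right)^{((n-2)/2)}=\frac{P_{n-2}(h)\bar I_2+\bar P_{n-2}(h)\bar I_0}{(-h)^{(n-1)/2}(1+h)^{n/2}}.
\]
Writing $g(h):=P_{n-2}(h)\bar I_2+\bar P_{n-2}(h)\bar I_0$ and invoking Rolle's theorem $(n-2)/2$ times, while absorbing the single Rolle step from $M$ to $M''$ against the free zero $M(-1)=0$, the problem reduces to
\[
\#\{h\in(-1,0):M(h)=0\}\leq \#\{h\in(-1,0):g(h)=0\}+\tfrac{n-2}{2}+1.
\]

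The heart of the proof is the bound $\#\{g=0 \text{ in }(-1,0)\}\leq 2n-3$. I would verify the three hypotheses (H1)--(H3) of Appendix A.4 for the Picard-Fuchs system \eqref{I02}: $\mathbf A'$ is constant and lower triangular with real distinct eigenvalues $4/3$ and $4/5$, producing characteristic exponents $\lambda=3/4,\mu=5/4$ and $\lambda^*=3/4$; $\det\mathbf A(h)=\tfrac{16}{15}h(1+h)$ has simple real roots $h_0=-1,h_1=0$ with $\mathrm{tr}\,\mathbf A(h)\equiv(\det\mathbf A)'(h)$; and $(\bar I_0,\bar I_2)$ is analytic at $-1$ by the asymptotic expansion in Appendix A.3. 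Since $g$ has $2(n-1)=2n-2$ free parameters, Theorem \ref{th:GI} supplies the Chebyshev-with-accuracy bound $(2n-2)-1+(1+[\lambda^*])=2n-2$ in $\mathbb C\setminus[0,\infty)$. Because the Appendix A.3 asymptotics give $\bar I_0,\bar I_2\sim\tfrac{\pi}{4}(1+h)$, the point $h=-1$ is a trivial common zero of $g$; stripping it off lowers the bound on $(-1,0)$ to $2n-3$.

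Combining, $\#\{M=0\text{ in }(-1,0)\}\leq(2n-3)+(n-2)/2+1=(5n-6)/2$ for $n\geq 6$ even; the analogous computation for $n\geq 7$ odd (one more elementary term in the polynomial block, hence one extra Rolle step) yields $(5n-5)/2$. Both are captured uniformly by $[(5n-5)/2]$, and the small-$n$ cases $n=2,3,4,5$ either fall within the same scheme or are strictly covered by it. I expect the main technical obstacle to be step one: carefully auditing the piecewise formula of Section \ref{sec:S4} to confirm that the $\sqrt{1\pm\sqrt{-h}}$ and $J_{-1}$ blocks really disappear when $P^\pm$ coincide, and that the residual elementary block has the small fixed degree $\leq 2$ in $\sqrt{-h}$; any slippage here propagates into the derivative count and inflates the final constant beyond $[(5n-5)/2]$.
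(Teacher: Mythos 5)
Your proposal does not address the statement you were asked to prove. The statement is Theorem \ref{th:GI} itself --- the Gavrilov--Iliev result that, under (H1)--(H3), the space $V_s$ of combinations $P(h)I_1(h)+Q(h)I_2(h)$ with controlled growth at infinity is Chebyshev with accuracy $1+[\lambda^*]$ in $\mathcal{D}=\mathbb{C}\setminus[h_1,\infty)$ when $\lambda\not\in\mathbb{Z}$, and degenerates to a polynomial space when $\lambda\in\mathbb{Z}$. What you have written is instead a proof sketch of Theorem \ref{th:S4} (the $[(5n-5)/2]$ bound for $S_4$ under smooth perturbations), and in the middle of it you explicitly \emph{invoke} Theorem \ref{th:GI} as a black box (``Theorem \ref{th:GI} supplies the Chebyshev-with-accuracy bound $(2n-2)-1+(1+[\lambda^*])=2n-2$''). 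Using the statement to be proved as the key lemma makes the argument circular with respect to the assigned task; nothing in your write-up establishes why $V_s$ has the claimed Chebyshev property.

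A genuine proof of Theorem \ref{th:GI} lives in an entirely different toolbox from anything you used: one must study the global monodromy of the two-dimensional Fuchsian system \eqref{I}, compute the characteristic exponents at the finite singular points $h_0,h_1$ and at infinity (this is where the hypotheses (H1)--(H2) and the quantities $\lambda$, $\mu=2-\lambda$, $\lambda^*$ enter), and then run a Petrov-style argument-principle count of zeros of $I\in V_s$ along the boundary of the cut plane $\mathcal{D}$, using the growth condition $I(h)\lesssim h^s$ to control the contribution of the large circle and the local exponents to control the contributions near $h_0$, $h_1$ and along the two sides of the cut $[h_1,\infty)$. The integer case $\lambda\in\mathbb{Z}$ requires a separate degeneration argument showing the solution space collapses to polynomials vanishing at $h_0$ and $h_1$. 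None of these ingredients appears in your proposal. Note also that the paper itself does not prove this theorem; it imports it verbatim from \cite{GI}, so if your intent was to reprove it you would need to reproduce the analysis of that reference, not the application in Section \ref{sec:S4s}.
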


\end{document}